\let\todo\relax
\newtheorem{theorem}{Theorem}[section]
\newtheorem{proposition}[theorem]{Proposition}
\newtheorem{corollary}[theorem]{Corollary}
\newtheorem{lemma}[theorem]{Lemma}
\newtheorem{problem}[theorem]{Problem}
\theoremstyle{definition}
\newtheorem{definition}[theorem]{Definition}
\newtheorem{example}[theorem]{Example}
\newtheorem{remark}[theorem]{Remark}
\newtheorem{construction}[theorem]{Construction}
\crefname{construction}{construction}{constructions} 
\Crefname{construction}{Construction}{Constructions} 
\crefname{algoline}{Line}{Lines}
\newcommandx{\moritz}[3][1=, 2=]{\todo[color=teal!50, #1, caption={#2}]{Moritz: #3}}
\newcommand{\complex}{\mathcal{C}}
\newcommand{\hyperplane}[1]{H_{#1}}
\newcommand{\halfspace}[2]{H^{#1}_{#2}}
\newcommand{\defcone}{\mathcal{DC}}
\newcommand{\defconemod}{\overline{\mathcal{DC}}}
\newcommand{\polytopes}{\mathcal{P}}
\newcommand{\vpolytopes}{\mathcal{VP}}
\newcommand{\aff}{\operatorname{aff}}
\newcommand{\Aff}{\operatorname{Aff}}
\newcommand{\argmax}{\operatorname{argmax}}
\newcommand{\B}{\mathcal{B}}
\newcommand{\cone}{\operatorname{cone}}
\newcommand{\conv}{\operatorname{conv}}
\newcommand{\cpwl}{\operatorname{CPWL}}
\newcommand{\F}{\mathcal{F}}
\newcommand{\ind}{\mathds{1}}
\newcommand{\Newt}{\operatorname{Newt}}
\newcommand{\PP}{\mathcal{P}}
\newcommand{\recc}{\operatorname{recc}}
\newcommand{\spn}{\operatorname{span}}
\newcommand{\str}{\operatorname{star}}
\newcommand{\supp}{\operatorname{supp}}
\newcommand{\FB}[1]{\mathcal{V}_{\mathcal{B}_{#1}}}
\newcommand{\Sf}[1]{\mathcal{F}_{#1}}
\newcommand{\inner}[1]{\langle #1 \rangle}
\newcommand{\ma}{\begin{pmatrix} }
\newcommand{\trix}{\end{pmatrix} }
\newcommand{\sma}{ \left( \begin{smallmatrix} }
\newcommand{\strix}{ \end{smallmatrix} \right) }
\newcommand{\decomp}[2]{\mathcal{D}_{#1}(#2)}
\newcommand{\VC}{\mathcal{V}_\complex}
\newcommand{\WC}{\mathcal{W}_\complex}
\newcommand{\VCP}{\overline{\mathcal{V}}_\complex^+}
\newcommand{\WCP}{\mathcal{W}_\complex^+}
\newcommand{\VCmod}{\overline{\mathcal{V}}_\complex}
\newcommand{\V}[1]{\mathcal{V}_{#1}}
\newcommand{\Vmod}[1]{\overline{\mathcal{V}}_{#1}}
\newcommand{\VPmod}[1]{\overline{\mathcal{V}}^+_{#1}}
\newcommand{\WP}[1]{\mathcal{W}^+_{#1}}
\newcommand{\submod}{\mathcal{SM}}
\newcommand{\modular}{\mathcal{MOD}}
\newcommand{\GPmod}{\overline{\mathcal{GP}}}
\newcommand{\R}{\mathbb{R}}
\tikzset{connection/.style={ -{Stealth[scale=1.5]}}}
\tikzset{neuron/.style={thick, circle, draw, inner sep = 0ex, minimum width = 4.5ex}}
\definecolor{concave}{cmyk}{.35,0,.99,0}
\definecolor{convex}{RGB}{0,116,125}
\definecolor{mpblue}{RGB}{0,116,125}
\definecolor{mpred}{RGB}{139,0,0}
\definecolor{mppetrol}{cmyk}{1,.26,.45,.16}
\definecolor{mpgreen}{cmyk}{.35,0,.99,0}
\definecolor{mpgray}{gray}{.261}
\title{Decomposition Polyhedra of Piecewise\\Linear Functions}
\thanks{A preliminary conference version (with unreviewed appendix) appeared as a spotlight at ICLR 2025 \citep{brandenburg2025decomposition}.}
\author{Marie-Charlotte Brandenburg$^{1}$}
\address{$^1$Ruhr-Universit\"at Bochum} 
\email{marie-charlotte.brandenburg@rub.de}
\author{Moritz Grillo$^2$}
\address{$^2$Max Planck Institute MiS}
\email{moritz.grillo@mpg.mis.de}
\author{Christoph Hertrich$^3$}
\address{$^3$University of Technology Nuremberg} 
\email{christoph.hertrich@utn.de}
\begin{document}

\begin{abstract}
    In this paper we contribute to the frequently studied question of how to decompose a continuous piecewise linear (CPWL) function into a difference of two convex CPWL functions. Every CPWL function has infinitely many such decompositions, but for applications in optimization and neural network theory, it is crucial to find decompositions with as few linear pieces as possible. This is a highly challenging problem, as we further demonstrate by proving that a recently proposed approach by \citet{tran2024minimal} does not extend beyond two dimensions. To make the problem more tractable, we propose to fix an underlying polyhedral complex determining the possible locus of nonlinearity. Under this assumption, we prove that the set of decompositions forms a polyhedron that arises as intersection of two translated cones. We prove that irreducible decompositions correspond to the bounded faces of this polyhedron and minimal solutions must be vertices. We then identify cases with a unique minimal decomposition, and illustrate how our insights have consequences in the theory of submodular functions. Finally, we improve upon previous constructions of neural networks for a given convex CPWL function and apply our framework to obtain results in the nonconvex case.
\end{abstract}
\maketitle

\section{Introduction}
Continuous piecewise linear (CPWL) functions play a crucial role in optimization and machine learning. While they have traditionally been used to describe problems in geometry, discrete and submodular optimization, or statistical regression, they recently gained significant interest as functions represented by neural networks with rectified linear unit (ReLU) activations~\citep{arora2018understanding}. Extensive research has been put into understanding which neural network architectures are capable of representing which CPWL functions~\citep{bakaev2025better,chen2022improved,haase2023lower,hertrich2021towards}. A major source of complexity in all the aforementioned fields is nonconvexity. Indeed, not only are nonconvex optimization problems generally much harder to solve than convex ones, but also for neural networks, nonconvexities are usually responsible for making the obtained representations complicated.

It is a well-known folklore fact that every (potentially nonconvex) CPWL function $f\colon \R^n\to\R$ can be written as the difference $f=g-h$ of two \emph{convex} CPWL functions \citep{melzer1986expressibility,KS87}. Consequently, a natural idea to circumvent the challenges induced by nonconvexity is to use such a decomposition $f=g-h$ and solve the desired problem separately for $g$ and $h$. This is the underlying idea of many successful optimization routines, known as DC programming (see survey by \citet{le2018dc}), and also occurs in the analysis of neural networks \citep{zhang2018tropical,hertrich2024neural}. However, the crucial question arising from this strategy is: how much more complex are $g$ and $h$ compared to~$f$? A well-established measure for the complexity of a CPWL function is the number of its linear pieces. Therefore, the main question we study in this article is the following.

\begin{problem}\label{problem}
	How to decompose a CPWL function $f$ into a difference $f=g-h$ of two convex CPWL functions with as few pieces as possible?
\end{problem}

There exist many ways in the literature to obtain decompositions into two convex functions, as we discuss later, but none of them guarantees minimality or at least a useful bound on the number of pieces of $g$ and $h$ depending on those of $f$. In fact, no finite procedure is known that guarantees to find a minimal decomposition, despite a recent attempt by \citet{tran2024minimal}.

\subsection{Our Contributions}

In this article, we propose a novel perspective on \Cref{problem} making use of polyhedral geometry and prove a number of structural results. We then apply our approach to existing decompositions in the literature, as well as to the theory of submodular functions and to the construction of neural networks representing a given CPWL function, serving as an additional motivation. Our detailed contributions are outlined as follows.

\subsubsection*{Decomposition Polyhedra.}

After setting the preliminaries in \Cref{sec:prelims} and introducing different ways to represent and study CPWL functions in \Cref{sec:different_perspectives}, \Cref{sec:compatible} presents our new polyhedral approach to \Cref{problem}. Instead of aiming for a globally optimal decomposition, we propose to restrict to solutions that are \emph{compatible with a given regular polyhedral complex}~$\complex$. In short, this means fixing where the functions $g$ and $h$ may have breakpoints, that is, points where they are not locally linear. We prove that the set of solutions to decompose $f$ in a way that is compatible with $\complex$ is a polyhedron $\decomp{\complex}{f}$ that arises as the intersection of two shifted polyhedral cones (\Cref{prop:shifted-cones}). We call this the \emph{decomposition polyhedron} of $f$ with respect to $\complex$. We prove several structural properties of $\decomp{\complex}{f}$. Among them, we show that the bounded faces of $\decomp{\complex}{f}$ are exactly those that cannot easily be simplified by subtracting a convex function (\Cref{th:decomp-reduced}), and we show that a minimal solution must be a vertex of $\decomp{\complex}{f}$ (\Cref{th:minimal-vertex}). The latter implies a finite procedure to find a minimal decomposition among those that are compatible with $\complex$, by simply enumerating the (potentially many) vertices of $\decomp{\complex}{f}$. It also implies that, if there is a unique vertex, then there is a unique minimal decomposition. We demonstrate that this is indeed the case for important CPWL functions, e.g., the median function, or those computed by a 1-hidden-layer ReLU network.

\subsubsection*{Translation to Virtual polytopes}
In \Cref{sec:virtual}, we demonstrate that our framework naturally translates to the setting of \emph{virtual polytopes} \citep{panina2015virtual}. We explain how decomposing a positively homogeneous CPWL function corresponds to finding representatives of a virtual polytope as a Minkowski difference of two conventional polytopes. Our main results translate directly, showing that minimal representatives, those with the fewest vertices, must correspond to vertices of an analogous ``representative polyhedron''. 
\subsubsection*{Existing Decompositions.} Afterwards, in \Cref{sec:existing}, we put our investigations into a broader context within the existing literature. We compare our minimality conditions with existing methods to construct decompositions. Notably, in this context, we refute a conjecture by \citet{tran2024minimal}, who provide an optimal construction method in dimension 2 and suggest that it might generalize to higher dimensions. However, we show that the construction does not generalize in the suggested way.

\subsubsection*{Applications to Submodular Functions.} 
In \Cref{sec:submodular}, we show that our framework entails the setup of set functions which are decomposed into differences of submodular set functions. Representing a set function as such a difference is a popular approach to solve optimization problems similarly to DC programming, see~\citet{Narasimhan2005,Iyer2012,halabi23}. We apply our results from \Cref{sec:compatible} to obtain analogous structural insights about (submodular) set functions (\Cref{cor:submod}).
\subsubsection*{Application to Neural Network Constructions.}
Finally, in \Cref{sec:NN}, we study the problem of constructing neural networks representing a given CPWL function. For convex CPWL functions, we blend two incomparable previous constructions by \citet{hertrich2021towards} and \citet{chen2022improved} to let the user freely choose a trade-off between depth and width of the constructed networks. We then apply the results of this paper to extend this to the nonconvex case by first decomposing the input function as a difference of two convex ones.

\subsubsection*{Limitations.}

We emphasize that the focus of our paper is fundamental research by building a theoretical foundation to tackle \Cref{problem} and connecting it with other fields. As such, our paper does not imply any direct improvement for a practical task, but it might prove helpful for that in the future. In particular, it is beyond the scope of our paper to provide any implementation of a (heuristic or exact) method to decompose a CPWL function into a difference of two convex ones. We consider it an exciting avenue for future research to do so, and to apply it to DC programming, discrete optimization, or neural networks.

On the theoretical side, the approach of fixing an underlying compatible polyhedral complex imposes some restriction on the set of possible solutions and can therefore be seen as a limitation. However, we think that this assumption is well justified by the structural properties this assumption allows us to infer and by the examples we demonstrate to fit into the framework. Even with this assumption, the problem remains very challenging.

\subsection{Further Related Work}

Explicit constructions to decompose CPWL functions as differences of convex CPWL functions can be found in several articles, such as \citet{KS87,Zalgaller2000,1333237,schlüter2021novel}. This was initiated in the $1$-dimensional case by \citet{butner70_somerepresentationtheorems}, and already laid out for positively homogeneous functions in general dimensions by \citet{melzer1986expressibility}.
Typically, such decompositions are based on certain representations of CPWL functions, which have been constructed, e.g., in \citet{tarela1999region,Ovchinnikov2002,wang2005generalization}; see also \citet{koutschan2023representing,koutschan2024representing} for a fresh perspective.
These representations also help to understand the representative capabilities of neural networks, see \citet{arora2018understanding,hertrich2021towards,chen2022improved}. 

Recently, a minimal decomposition for the $2$-dimensional case was given by \citet{tran2024minimal}. They use a duality between CPWL functions and  polyhedral geometry, based on the ``balancing condition'' from tropical geometry. This condition has already been studied by \citet{McMullen1996} in terms of weight spaces of polytopes. Generally, methods from tropical geometry have been successfully used to understand the geometry of neural networks \citep{zhang2018tropical,brandenburg2024the,montufar2022sharp}. In this context, a central open question is the required depth to exactly represent CPWL functions on $\R^n$ \citep{hertrich2021towards,haase2023lower,grillo2025depth,bakaev2025better,bakaev2025depth}.

Submodular functions are sometimes called the discrete analogue of convex functions, and optimizing over them is a widely studied problem, which is also relevant for machine learning. A submodular function can be minimized in polynomial time \citep{grotschel1981ellipsoid}. In analogy to DC programming, this sparked the idea of minimizing a general set function by representing it as a difference of two submodular ones \citep{Narasimhan2005,Iyer2012,halabi23}. Related decompositions were recently studied by \citet{berczi2024monotonicdecompositionssubmodularset}. In polyhedral theory, such a decomposition is equivalent to Minkowski differences of generalized permutahedra \citep{Ardila2009,jochemko22_generalizedpermutahedraminkowski}.

Another closely related stream of work is concerned with the (exact and approximate) representative capabilities of neural networks, starting with universal approximation theorems~\citep{cybenko1989approximation}, and specializing to ReLU networks, their number of pieces, as well as depth-width-tradeoffs~\citep{telgarsky2016benefits,eldan2016power,arora2018understanding,ergen2024topological}. In addition, \citet{hertrich2023provably,hertrich2024relu,hertrich2024neural} provide neural network constructions and bounds for CPWL functions related to combinatorial optimization problems. Geometric insights have also proven to be useful to understand the computational complexity of tasks related to neural networks, like training and verification \citep{froese2022computational,bertschinger2024training,froese2024training,froese2025complexity,stargalla2025computational}. Recently, \citet{safran2024many} give an explicit construction of how to efficiently approximate the maximum function with ReLU networks.

\section{Preliminaries}
\label{sec:prelims}
In this section we introduce the necessary preliminaries on polyhedral geometry and CPWL functions; see \citet{ziegler_lecturespolytopes} for more details. 
\subsection{Polyhedra and Polyhedral Complexes}
For a vector $a \in \R^{d}$ and a scalar $b \in \R$, the \emph{hyperplane} \[\hyperplane{a,b} \coloneqq \{x \in \R^d \mid \inner{a,x}+b=0\}\]subdivides $\R^d$ into \emph{half-spaces} \[\halfspace{+}{a,b} \coloneqq \{x \in \R^d \mid \inner{a,x}+b\geq 0\} \text{ and }\halfspace{-}{a,b} \coloneqq \{x \in \R^d \mid \inner{a,x}+b\leq0\}.\]
A \emph{polyhedron} $P$ is the intersection of finitely many half-spaces and a \emph{polytope} is a bounded polyhedron. A hyperplane \emph{supports} $P$ if it bounds a half-space containing $P$, and
any intersection of $P$ with such a supporting hyperplane yields a \emph{face} $F$ of $P$. For any direction $u \in \R^d$, it holds that $P^u \coloneqq \argmax \{\inner{u,y} \mid y \in P\}$ is a face of $P$ and every face of $P$ except for the empty face is of this form. 
A face is a \emph{proper face} if $F \subsetneq P$ and inclusion-maximal proper faces are referred to as \emph{facets}.
A \emph{polyhedral cone} $C \subseteq \R^d$ is a polyhedron such that $\lambda u + \mu v \in C$ for every $u,v \in C$ and  $\lambda, \mu \in \R_{\geq 0}$.
The \emph{dual cone} of $C$ is
$
	C^\vee = \{y \in (\R^d)^* \mid \inner{x,y}\geq 0 \text{ for all } x \in C\}.
$
A cone is \emph{pointed} if it does not contain a line.  A cone $C$ is \emph{simplicial}, if there are linearly independent vectors $v_1,\ldots,v_k\in \R^d$ such that $C=\{\sum_{i=1}^k\lambda_i v_i \mid \lambda_i \geq 0\}$. For a cone $C$ and $t \in \R^d$, we call $t+C$ a \emph{shifted cone} or \emph{translated cone}.
        A \emph{ray} $\rho$ is a one-dimensional pointed cone; a vector $ r$ is a \emph{ray generator} of $\rho$ if $\rho = \{\lambda\,  r \mid \lambda \geq 0\}$.
      A ray $\rho \subseteq C$ of a cone~$C$ is an \emph{extreme ray} if there do not exist $\lambda_1,\lambda_2 >0$ and distinct rays $\rho_1,\rho_2 \subseteq C$ such that $\rho = \lambda_1 \rho_1 + \lambda_2 \rho_2$.
A \emph{polyhedral complex} $\complex$ is a finite collection of polyhedra such that \begin{enumerate}
    \item $\emptyset \in \complex$
    \item if $P \in \complex$ then all faces of $P$ are in $\complex$, and 
    \item if $P, P' \in \complex$, then $P \cap P'$ is a face both of $P$ and $P'$.
\end{enumerate} 
For a polyhedral complex $\complex$ in $\R^d$ and $k \leq d$ we denote by $\complex^k$ the set of $k$-dimensional polyhedra in~$\complex$. A \emph{polyhedral fan} is a polyhedral complex in which every polyhedron is a cone. We call $\complex^{d-1}$ the \emph{facets} of $\complex$ and $\complex^d$ the \emph{regions} of $\complex$.
The \emph{dimension} of a complex~$\complex$ is the maximal dimension of its polyhedra.  
A polyhedral complex~$\complex$ in~$\R^d$ is called \emph{pure} of dimension~$k$ if all its maximal polyhedra have dimension~$k$, and \emph{complete} if the union of its polyhedra covers~$\R^d$. In this paper, we mainly consider pure and complete polyhedral complexes.

Given two $d$-dimensional polyhedral complexes $\complex, \mathcal Q$, the complex $\complex$ is a \emph{refinement} of $\mathcal Q$ if for every $\tau \in \complex^d$ there exists $\sigma \in \mathcal Q^d$ such that $\tau \subseteq \sigma$. The complex $\mathcal Q$ is a \emph{coarsening} of $\complex$ if $\complex$ is a refinement of $\mathcal Q$.
The \emph{star} of a face $\tau \in \complex$ is the set of all faces containing $\tau$, i.e., $\str_{\complex}(\tau) = \{\sigma \in \complex \mid \tau \subseteq \sigma\}$. The \emph{support} of a polyhedral complex $\complex$ is given by $|\complex|=\bigcup_{P \in \complex} P$.

\subsection{CPWL Functions}
A function $f\colon\R^d\to\R$ is called \emph{continuous and piecewise linear} (CPWL)\footnote{In our paper, we define piecewise linear functions to have finitely many pieces.}, if there exists a complete polyhedral complex $\complex$ such that the restriction of $f$ to each polyhedron $P\in\complex$ is an affine linear function. If this condition is satisfied, we say that $f$ and $\complex$ are \emph{compatible} with each other. A vector $x \in  \R^d$ is a \emph{breakpoint} of $f$ if there is no open set $U \subseteq \R^d$ containing $x$ such that $f$ is affine linear on $U$. The breakpoints of $f$ are contained in the union $\bigcup_{\sigma \in \complex^{d-1}} \sigma$ of facets of every complex $\complex$ compatible with $f$. The restriction $f_{|R}$ of the function $f$ to a region $R$ of $\complex$ is called a \emph{linear component} of $f$. The \emph{number of pieces} of $f$ is the smallest possible number of regions $\#\complex^d$ of a compatible polyhedral complex $\complex$. We denote by~$\cpwl_d$ the vector space of CPWL functions from~$\R^d$ to~$\R$.

If $f$ is a \emph{convex} CPWL function, then it can be written as the maximum of its linear components $f(x)=\max_{i\in[k]} \inner{a_i,x} + b_i$. It follows that there is a \emph{unique coarsest} compatible polyhedral complex $\complex_f$, namely the one with \[\complex_f^d = \{\{x\in \R^d\mid \inner{a_i,x} + b_i = f(x) \}\mid i \in [k]\}.\] Hence, for convex functions, the number of pieces and the number of linear components coincides. We denote by~$\cpwl^+_d$ the cone of convex CPWL functions from~$\R^d$ to~$\R$.
We call a polyhedral complex $\complex$ \emph{regular}, if there exists a convex CPWL function $f$ such that $\complex=\complex_f$.

\section{Different Perspectives on CPWL functions}
\label{sec:different_perspectives}
In this section, we describe several ways to represent and study CPWL functions from a polyhedral perspective. First, we show that CPWL functions can be parameterized by encoding their local convexity or concavity at the facets of a compatible polyhedral complex, leading to the notion of a balanced complex. Positively homogeneous convex CPWL functions, in particular, correspond to polytopes, and the associated balanced fan is the normal fan of the polytope. This section develops these correspondences in more detail. Most of the results are classical in tropical geometry (see e.g. \cite{maclagan2015introduction} and \cite{joswig21_essentialstropicalcombinatorics}) and already appear in \citep{McMullen1996}, but sometimes we adapt them slightly to fit our setting. 
\subsection{CPWL Functions and Balanced Polyhedral Complexes}

A pure $d$-dimensional polyhedral complex $\complex$ can be equipped with a \emph{weight function} $w : \complex^{d-1} \to \R$, as we describe as follows.
Given a face $\sigma \in \complex$, we denote by $\aff(\sigma) \subseteq \R^d$ the unique smallest affine subspace containing $\sigma$. The \emph{relative interior} of $\sigma$ is the interior of $\sigma$ inside the affine space $\aff(\sigma)$.
For any dimension $k\leq d$ and any $\tau \in \complex^{k-1},\sigma \in \complex^{k}$ with $\tau \subseteq \sigma$, let  $e_{\sigma/\tau} \in \R^d$ be the normal vector of $\tau$ with respect to $\sigma$, that is, the unique vector with length one that is parallel to $\aff(\sigma)$, orthogonal to $\aff(\tau)$, and points from the relative interior of $\tau$ into the relative interior of $\sigma$. 
A pair $(\complex,w)$ forms a \emph{(weighted) balanced polyhedral complex} if the weight function satisfies the \emph{balancing condition} at every  $\tau \in \complex^{d-2}$:
\begin{equation*}\label{eq:balancing-condition}
    \sum\limits_{\substack{\sigma \in \complex^{d-1}: \\ \sigma \supset \tau}} w(\sigma)\cdot e_{\sigma/\tau}=0.
\end{equation*}

In the case where $\complex$ is a polyhedral fan in $\R^2$, there is only one $0$-dimensional face, namely the origin. The balancing condition then requires that the weighted sum of the unit-length ray generators of the rays in $\complex$ equals the zero vector. In higher dimensions, taking the star of a codimension-$2$ face $\tau$ and modding out $\tau$ yields a two-dimensional fan. Intuitively, the balancing condition requires that this two-dimensional fan is balanced in the same sense as described above, see \Cref{fig:balancing_condition} for an illustration.
\begin{figure}
    \centering
    \begin{subfigure}[t]{0.31\textwidth}
\centering
    \begin{tikzpicture}[scale = 1]
    \begin{scope}[shift={(0,0)}] 
        \draw[convex, very thick] (2,0)-- (0,0) -- (0,2);
        \draw[concave, very thick] (0,0) -- (2,2);
        \draw[gray,dashed, very thick] (0,0) -- (-2,-2);
        \node at (1.5,0.75) {$x_2$};
        \node at (0.75,1.5) {$x_1$};
        \node at (0.5,-1) {$0$};
      \node at (-1,0.5) {$0$};
    \end{scope}
    \end{tikzpicture}
    \caption{The CPWL function $\max \{0, \min \{x_1,x_2\}\}$ and a compatible polyhedral complex. Blue corresponds to convex breakpoints and green to concave breakpoints.}
    \label{fig:running_example_cpwl}
\end{subfigure}
\hspace{0.1cm}
\begin{subfigure}[t]{0.31\textwidth}
  \centering
    \begin{tikzpicture}[scale = 0.7]
        \begin{scope}[scale=1,shift={(6,0)}] 
        \draw[convex] (0,0)-- (2,0) node[right]{$1$};
        \draw[convex] (0,0)-- (0,2) node[above]{$1$};
        \draw[concave] (0,0) -- (2,2) node[above]{$-\sqrt{2}$};
        \draw[gray,dashed] (0,0) -- (-2,-2) node[below]{$0$};
        \draw[->,black, thick] (0,0)-- (1,0);
        \draw[->,black, thick] (0,0)-- (0,1);
        \draw[->,black, thick] (0,0) -- (0.7,0.7);
        \draw[->,black, thick] (0,0) -- (-0.7,-0.7);
    \end{scope}
    \end{tikzpicture}
    \caption{A balanced fan where the weight function is induced by the CPWL function given in \Cref{fig:running_example_cpwl} as described in \Cref{lem:structure_theorem_tropical_geometry}.}
    \end{subfigure}
    \hspace{0.1cm}
\begin{subfigure}[t]{0.31\textwidth}
  \centering
    \begin{tikzpicture}[scale=2.75]

    \fill[convex, opacity=0.1] (0, 0, 0) -- (1, 0, 0) -- (0, 1, 0) -- cycle;
     \fill[convex, opacity=0.1] (0, 0, 0) -- (-1 ,0, -1) -- (0, 1, 0) -- cycle;
    \fill[convex, opacity=0.1] (0, 0, 0) -- (0, 0, 1) -- (0, 1, 0) -- cycle;
    \draw[black,thick] (0, 0, 0) -- (0, 1, 0) node[above] {$\tau$};

      \draw[->] (0,0.25,0) -- (0,0.25,0.5) node[below]{$e_{\sigma_3 / \tau} $};
   \draw[->] (0,0.25,0) -- (0.25,0.25,0)node[right]{$e_{\sigma_1 / \tau} $};
   \draw[->] (0,0.25,0) -- (-0.25,0.25,-0.25)node[left]{$e_{\sigma_2 / \tau} $};

    \node at (1, 0, 0) {$\sigma_1$};
    \node at (-1, 0, -1) {$\sigma_2$};
    \node at (0, 0, 1) {$\sigma_3$};

\end{tikzpicture}
\caption{Illustration of the balancing condition in higher dimension. The weighted sum of the vectors $e_{\sigma_i / \tau}$ should equal the zero vector.}
\end{subfigure}
\caption{An illustration of the balancing condition.}
\label{fig:balancing_condition}
\end{figure}

\begin{lemma}
\label{lem:structure_theorem_tropical_geometry}
   Let $f$ be a CPWL function compatible with the polyhedral complex $\complex$. For a facet $\sigma \in \complex^{d-1}$, let $P,Q \in \complex^d$ be the unique polyhedra such that $P \cap Q = \sigma$ and let $a_P,a_Q \in \R^d, b_P,b_Q\in \R$ such that $f_{|P}(x) = \inner{a_P,x} + b_P$ for $x \in P$ and $f_{|Q}(x) = \inner{a_Q,x} + b_Q$  for $x \in Q$. Then, the function $w_f \colon \complex^{d-1} \to \R$ with \[w_f(\sigma)  \coloneqq \langle e_{P/\sigma}, a_{P} \rangle + \langle e_{Q/\sigma}, a_{Q} \rangle = \langle e_{P/\sigma}, a_{P} - a_{Q} \rangle \]
 defines a balanced polyhedral complex $(\complex,w_f)$.
\end{lemma}
\begin{proof}
     Note that if $f$ is locally convex at $\sigma$, then $\langle e_{P/\sigma}, a_{P} - a_{Q} \rangle = \|a_{P} - a_{Q}\|_2$ and if $f$ is locally concave at $\sigma$, then $\langle e_{P/\sigma}, a_{P} - a_{Q} \rangle = -\|a_{P} - a_{Q}\|_2$. The proof proceeds analogously to the case where $f$ has only convex breakpoints and the coefficients of the affine maps are rational. In this case, the lemma follows from the structure theorem of tropical geometry (\citet{maclagan2015introduction} Proposition 3.3.2).
    Here, we present an adjusted proof (to not necessarily convex functions and irrational coefficients).
 Let $\tau \in \complex^{d-2}$ and $\{P_1,\ldots,P_m\}=\str_\complex(\tau) \cap \complex^d$ and $\{\sigma_1,\ldots,\sigma_m\}=\str_\complex(\tau) \cap \complex^{d-1}$ be ordered in a cyclic way, that is, $P_i \cap P_{i+1} = \sigma_i$ for $i \in [m]$, where $P_{m+1}=P_1$.  Note that, since $f$ is continuous, we have that $a_{P_i}-a_{P_{i+1}} \in \spn(e_{P_i/\sigma_i})$. The linear map $T_\tau \colon \aff(\tau)^\perp \to \aff(\tau)^\perp$ satisfying $T_\tau(e_{P_i/\sigma_i}) = e_{\sigma_i/\tau}$ (given by a rotation matrix) is an automorphism, implying that
    \begin{align*}
    \sum_{\substack{\sigma \supset \tau\\\sigma \in \complex^{d-1}}} w_f(\sigma)\cdot e_{\sigma/\tau}&=
    \sum_{i=1}^{m}w_f(\sigma_i)\cdot T_\tau(e_{P_i/\sigma_i})\\&=
    T_\tau\left(\sum_{i=1}^{m}\inner{e_{P_i/\sigma_i},a_{P_i}-{a_{P_{i+1}}}}\cdot e_{P_i/\sigma_i}\right)\\&=
    T_\tau\left(\sum_{i=1}^{m}\inner{e_{P_i/\sigma_i},e_{P_i/\sigma_i}}\cdot (a_{P_i}-{a_{P_{i+1}}})\right)\\&=
    T_\tau(0)=0.
    \end{align*}
\end{proof}

\begin{lemma}
\label{lem:existence_of_function_for_balanced_complex}
    Let $(\complex,w)$ be a balanced polyhedral complex. Then there exists a function $f$ compatible with $\complex$ such that $w_f=w$.
\end{lemma}

\begin{proof}
In the case that $w_f$ is nonnegative, the lemma follows from \citet{maclagan2015introduction} Proposition 3.3.10. Here, we present an adjusted proof (to not necessarily convex functions and irrational coefficients).
Let $\complex^d=\{P_1, \ldots, P_k\}$ and for $P \in \complex^d, \sigma \in \complex^{d-1}$, let $b_{P/\sigma} \in \R$ such that $\sigma$ is contained in the hyperplane $\{x \in \R^d \mid \inner{e_{P/\sigma},x}+b_{P/\sigma}=0\}$ and define the function $f_{P/\sigma}\colon\R^d \to \R$ by $f_{P/\sigma}(x)=\inner{e_{P/\sigma},x}+b_{P/\sigma}$. 
Since $\complex$ is complete, the graph $G=(V,E)$ given by $V=\{1,\ldots,k\}$ and $E=\{\{i,j\}\mid P_i \cap P_j \in \complex^{d-1}\}$ is connected. Start by defining the function $f_{|P_1} = 0$. For $1<i\leq k$, let $(j_1,\ldots, j_m)$ be a path from $1$ to $i$ and for $\ell \in[m-1]$, let $\sigma_\ell = P_{j_\ell} \cap P_{j_{\ell+1}}$ and define the function \[f_{|P_i}= \sum_{\ell=1}^{m-1} w(\sigma_\ell) \cdot f_{P_{j_\ell}/\sigma_\ell}.\] 
    First, we argue that $f_{|P_i}$ is well-defined, that is, the definition of $f_{|P_i}$ does not depend on the path from vertex $1$ to $i$. Equivalently, it suffices to show that for any cycle $(i_1,\ldots,i_m)$ in $G$ with $i_1=i_m$, it holds that
        $\sum_{\ell=1}^{m-1} w(\sigma_\ell) f_{{P_{i_\ell}/\sigma_\ell}}=0$, where $\sigma_\ell = P_{i_\ell} \cap P_{i_{\ell+1}}$. Since $\complex$ is complete any cycle decomposes into cycles $(i_1,\ldots,i_m)$ corresponding to the star of a cone $\tau \in \complex^{d-2}$, that is, $\{\sigma_1,\ldots, \sigma_{m-1}\} = \{\sigma \in \complex^{d-1} \mid \sigma \supset \tau\}$.
        Since the map $T_\tau$ from \Cref{lem:structure_theorem_tropical_geometry} is an automorphism, it holds that $\sum_{\ell=1}^{m-1} w(\sigma_\ell)\cdot e_{\sigma_\ell/\tau}=0$ if and only if $\sum_{\ell=1}^{m-1} w(\sigma_\ell) \cdot e_{P_{i_\ell}/\sigma_\ell}=0$.

        So, let $x \in \R^d$ be arbitrary and $x' \in \aff(\tau)$ and $x''\in \aff(\tau)^\perp$ such that $x=x'+x''$. Since $w$ is balanced, it holds that $\sum\limits_{\substack{\sigma \supset \tau\\\sigma \in \complex^{d-1}}} w(\sigma)\cdot e_{\sigma/\tau}=0$ and hence it follows that \begin{align*}
           \sum_{\ell=1}^{m-1} w(\sigma_\ell) f_{{P_{i_\ell}/\sigma_\ell}}(x) &= \sum_{\ell=1}^{m-1} w(\sigma_\ell) \cdot (\inner{e_{P_{i_\ell}/\sigma_\ell} ,x'+x''}+b_{P_{i_\ell}/\sigma_\ell})
           \\&= \sum_{\ell=1}^{m-1} w(\sigma_\ell) \cdot \inner{e_{P_{i_\ell}/\sigma_\ell} ,x''}
           \\&=  \inner{\sum_{\ell=1}^{m-1} w(\sigma_\ell) \cdot e_{P_{i_\ell}/\sigma_\ell,x''}}
           \\&=0.
        \end{align*}

        By definition, $f$ is a CPWL function  and compatible with $\complex$. Let $P,Q \in \complex^{d}$ such that $\sigma = P \cap Q \in \complex^{d-1}$. Then it holds that $a_P-a_Q=w(\sigma)\cdot e_{P/\sigma}$ and hence \[w_f(\sigma)=\inner{e_{P/\sigma}, a_{P}} + \inner{e_{Q/\sigma}, a_{Q}}= \inner{e_{P/\sigma},a_P-a_Q} =\inner{e_{P/\sigma},w(\sigma)\cdot e_{P/\sigma}} = w(\sigma),\]
        finishing the proof.
\end{proof}

\begin{lemma}
\label{lem:VP_subspace}
    Let $\complex$ be a polyhedral complex in $\R^d$. The set of CPWL functions compatible with $\complex$ forms a linear subspace $\VC$ of the vector space $\cpwl_d$ of CPWL functions from~$\R^d$ to~$\R$. 
\end{lemma}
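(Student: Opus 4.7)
The plan is to verify the three subspace axioms directly, relying on the fact that the space of affine functions on any fixed set is itself a linear space and that linear combinations of continuous functions are continuous. I would denote $\VP$ the set of CPWL functions compatible with $\mathcal{P}$, viewed inside $C(\R^n,\R)$.

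First I would note that the zero function is trivially continuous, and its restriction to every $P\in\mathcal{P}^n$ is the constant affine function $0$; hence $0\in\VP$, so $\VP$ is nonempty. Next, given $f_1,f_2\in\VP$ and $\lambda_1,\lambda_2\in\R$, the function $\lambda_1 f_1+\lambda_2 f_2$ is continuous as a linear combination of continuous functions. For each full-dimensional $P\in\mathcal{P}^n$ there exist affine functions $a_1^P,a_2^P$ with $f_i|_P=a_i^P$; then
\[
(\lambda_1 f_1+\lambda_2 f_2)|_P=\lambda_1 a_1^P+\lambda_2 a_2^P,
\]
which is again affine. Thus $\lambda_1 f_1+\lambda_2 f_2$ is compatible with $\mathcal{P}$, and in particular CPWL, so it lies in $\VP$.

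Finally, I would briefly observe that one should check these restrictions are well-defined on lower-dimensional faces shared by two full-dimensional cells, but this is automatic from continuity: two affine functions agreeing on a full-dimensional cell's boundary with another cell will yield a continuous global function whose restriction to the neighboring cell is still the prescribed affine function. Since $\VP$ contains $0$ and is closed under linear combinations, it is a linear subspace of $C(\R^n,\R)$.

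I do not anticipate a real obstacle here; the statement is essentially structural and follows immediately from the fact that affineness and continuity are both preserved under finite linear combinations. The only point requiring a moment of care is to make sure the compatibility condition is phrased cell-by-cell, so that the subspace structure on each cell transfers to a global subspace structure.
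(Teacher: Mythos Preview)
Your proof is correct and follows essentially the same approach as the paper: both arguments simply observe that for $f,g\in\VP$ and scalars $\lambda,\mu$, the restriction $(\lambda f+\mu g)|_P=\lambda f|_P+\mu g|_P$ is affine on every $P\in\PP^n$, hence $\lambda f+\mu g\in\VP$. Your additional remarks about the zero function and well-definedness on shared faces are fine but not strictly needed, since continuity of $\lambda f+\mu g$ is immediate from continuity of $f$ and $g$.
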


\begin{proof}
    Let $f,g$ be CPWL functions which are compatible with $\complex$, and $\lambda, \mu \in \R$. Then for any $P \in \complex^d$ holds
    $(\lambda f + \mu g)_{|P} = \lambda f_{|P} + \mu g_{|P}$, which is an affine function restricted to $P$. Thus, the set $\VC$ of CPWL functions compatible with $\complex$ forms a linear subspace of the space of continuous functions.
\end{proof}

Let $\Aff(\R^d)$ be the space of affine functions from $\R^d$ to $\R$. For many of our arguments, adding or subtracting an affine function $a\in \Aff(\R^d)$ does not change anything. In particular, a function $f$ is convex if and only if $f+a$ is convex. Therefore it makes sense to define the quotient space $\VCmod\coloneqq\VC/\Aff(\R^d)$, where we identify functions in $\VC$ that only differ by adding an affine function.
\Cref{lem:structure_theorem_tropical_geometry} and \Cref{lem:existence_of_function_for_balanced_complex} imply that we can parameterize a function $f \in \VCmod$ by keeping track of ``how convex or concave'' the function is at the common face $\sigma \in \complex^{d-1}$ of two neighboring pieces, as we summarize in the following lemma.

\begin{lemma}\label{lem:edge_rep}
The vector space 
$\WC \coloneqq \{w \colon \complex^{d-1} \to \R \mid (\complex,w)$ is balanced$\}$
is isomorphic to $\VCmod$.
\end{lemma}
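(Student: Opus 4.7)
The plan is to exhibit an explicit linear map $\Phi\colon \VP \to \WP$ that reads off the gradient jumps of $f$ across the codimension-one faces of $\PP$, and to show it descends to an isomorphism on the quotient $\VPmod$. Concretely, for $f \in \VP$ and $\sigma \in \PP^{n-1}$, let $P_+, P_- \in \PP^n$ be the two top-dimensional pieces adjacent to $\sigma$ and let $a_+, a_-$ be their affine expressions. Because $a_+ - a_-$ vanishes along $\sigma$, the gradient difference $\nabla a_+ - \nabla a_-$ is orthogonal to $\aff(\sigma)$, so after fixing once and for all a unit normal $n_\sigma$ to $\aff(\sigma)$ we may write $\nabla a_+ - \nabla a_- = w_f(\sigma)\, n_\sigma$ for a unique scalar, and set $\Phi(f)(\sigma) := w_f(\sigma)$. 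Affine functions produce zero jumps, so $\Phi$ factors through $\VPmod$.

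To verify that the image lies in $\WP$, I would fix $\tau \in \PP^{n-2}$ and slice $\R^n$ by a two-dimensional plane $H$ meeting $\mathrm{relint}(\tau)$ orthogonally to $\aff(\tau)$. Inside $H$ the codimension-one faces $\sigma \supset \tau$ appear as rays from the slicing point and the incident $n$-cells enumerate the sectors between these rays cyclically. Going once around this cycle and telescoping $\sum (\nabla a_{i+1} - \nabla a_i) = 0$ yields $\sum_{\sigma \supset \tau} \pm w_f(\sigma)\, n_\sigma = 0$ in $H$. Since within $H$ each $n_\sigma$ is perpendicular to $e_{\sigma/\tau}$ (both are unit vectors in $H$, the former normal to $\aff(\sigma)$ and the latter tangent to it), applying the $90^\circ$ rotation of $H$ converts this identity into precisely the balancing condition $\sum_{\sigma \supset \tau} w_f(\sigma)\, e_{\sigma/\tau} = 0$, with signs absorbed by the consistent choice of normals $n_\sigma$.

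For the inverse direction, given a balanced $w \in \WP$ I would recover a representative $f$ by path integration on the dual graph of $\PP^n$: fix a basepoint piece $P_0 \in \PP^n$, set $\nabla(f|_{P_0}) := 0$, and for any $P \in \PP^n$ choose a chain of pairwise-adjacent top-dimensional cells $P_0, P_1, \dots, P_k = P$ and define $\nabla(f|_P) := \sum_{i} \pm w(\sigma_i)\, n_{\sigma_i}$ with signs tracking the direction of crossing across $\sigma_i = P_{i-1} \cap P_i$. Continuity along the $\sigma_i$ then pins down each affine expression up to a single global additive constant, and $f$ represents a well-defined class in $\VPmod$. Injectivity of the induced map is immediate, since $w_f \equiv 0$ forces $f$ to have no gradient jumps and hence to be globally affine.

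The main obstacle is verifying that this path integration is well-defined, i.e., that every closed loop in the dual graph of $\PP^n$ contributes zero accumulated gradient. By a combinatorial connectivity argument over $\PP$, every such loop can be decomposed into elementary loops encircling a single codimension-two face $\tau \in \PP^{n-2}$, and on such an elementary loop the accumulated gradient is exactly $\sum_{\sigma \supset \tau} \pm w(\sigma)\, n_\sigma$, which vanishes by the balancing hypothesis together with the rotation argument above. Making this loop-reduction rigorous is the technical heart of the proof; in the classical case of nonnegative rational weights it follows from the structure theorem of tropical geometry, and in the general real-valued setting of this lemma the analogous reduction is carried out cell-by-cell, as detailed in \Cref{app:edge_rep}.
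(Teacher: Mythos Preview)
Your proposal is correct and follows essentially the same approach as the paper's proof: define the map via gradient jumps across codimension-one faces, verify balancing by telescoping around each $\tau\in\PP^{n-2}$ and applying a $90^\circ$ rotation in the orthogonal $2$-plane, identify the kernel as $\Aff(\R^n)$, and construct the inverse by path-integration on the dual graph of $\PP^n$, reducing well-definedness to the balancing condition on elementary loops around codimension-two faces. The only cosmetic difference is your use of a fixed normal $n_\sigma$ (with sign bookkeeping) in place of the paper's outward normal $e_{P/\sigma}$, which has the advantage of making $w_f$ automatically nonnegative for convex $f$.
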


\begin{proof}
     By \Cref{lem:structure_theorem_tropical_geometry}, $f \mapsto w_f$ is a linear map from $\VC$ to $\WC$, which is surjective by \Cref{lem:existence_of_function_for_balanced_complex}. The kernel of the map is $\Aff(\R^d)$. To see this, let $f$ be in the kernel of the map $f \mapsto w_f$. It holds that \[w_f(\sigma) = \inner{e_{P/\sigma},a_P}+\inner{e_{Q/\sigma},a_Q} = \inner{e_{P/\sigma},a_P-a_Q} = 0\] if and only if $a_P-a_Q=0$ due to the fact that $a_P-a_Q \in \spn(e_{P/\sigma})$. Due to the continuity of $f$, this also implies that $b_{P}=b_Q$ and hence $f_{|Q}=f_{|P}$ and therefore $f$ is affine linear.
\end{proof}
\begin{corollary}
\label{cor:VP_finite_dim}
    $\VC$ is finite-dimensional.
\end{corollary}
\begin{proof}
    By \Cref{lem:edge_rep}, we have that 
    $\VCmod = \VC / \Aff(\R^d) \cong \WC$. Thus, the dimension of $\VC$
    is bounded from above by $\dim(\WC)+\dim(\Aff(\R^d))\leq \#\complex^{d-1}+(d+1)$.
\end{proof}
In the following proposition, we prove that nonnegative (positive) weights in fact correspond to the function being (strictly) convex.
\begin{proposition}
\label{lem:nonnegative}
    A function $f \in \VC$ is convex if and only if $w_f$ is nonnegative. Moreover, $f$ is strictly convex with $\complex = \complex_f$ if and only if $w_f$ is strictly positive.
\end{proposition}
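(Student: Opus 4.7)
The plan is to reduce convexity of $f$ to a local condition at each codimension-1 face and then read off the sign from $w_f$. From the isomorphism $\VPmod\cong\WP$ of \Cref{lem:edge_rep}, for every $\sigma\in\PP^{n-1}$ shared between full-dimensional cells $P,P'\in\PP^n$ with affine pieces $g_P,g_{P'}$ of $f$, continuity forces $g_{P'}-g_P$ to vanish on $\sigma$, so the gradient jump $\nabla g_{P'}-\nabla g_P$ is parallel to the unit normal $e_{P'/\sigma}$ pointing from $\sigma$ into $P'$; the associated scalar is, up to the normalization of the isomorphism, exactly $w_f(\sigma)$. I would first verify this identification carefully (a short computation along the lines of the proof of \Cref{lem:edge_rep}), since it is the bridge between the analytic condition (convexity of $f$) and the combinatorial quantity $w_f$.

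Given this identification, I would prove the first equivalence in two steps. For the forward implication, assuming $f$ is convex, fix any $\sigma$ and restrict $f$ to a short segment through the relative interior of $\sigma$ in the normal direction $e_{P'/\sigma}$. The result is a 1D CPWL function with a single breakpoint whose slope jump is $w_f(\sigma)$ times a positive factor, and 1D convexity forces this jump to be nonnegative, giving $w_f(\sigma)\geq 0$. For the converse, assuming $w_f\geq 0$, I would verify convexity along every line segment. By a density argument it suffices to treat segments in generic position with respect to $\PP$, i.e., transversal to every codim-1 face and avoiding every face of codimension at least two. Along such a segment $f$ is 1D CPWL with breakpoints at the crossings of codim-1 faces $\sigma_1,\sigma_2,\ldots$; at each crossing, the same computation shows that the slope jump equals $w_f(\sigma_i)$ times a strictly positive factor (the inner product of the segment's direction with the appropriate $e_{P'/\sigma_i}$, positive by transversality), hence nonnegative. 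Thus the 1D restriction is convex, and by continuity $f$ is convex along every segment and therefore globally.

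For the second claim, I would use a structural observation about compatible complexes: since $\PP_f$ is the unique coarsest compatible complex when $f$ is convex, any compatible $\PP$ refines $\PP_f$, so $\PP=\PP_f$ exactly when no two adjacent full-dimensional cells of $\PP$ carry the same affine piece. Equivalently, $\nabla g_{P'}-\nabla g_P\neq 0$ across every $\sigma=P\cap P'\in\PP^{n-1}$, i.e., $w_f(\sigma)\neq 0$ there. Combined with the first part this yields $w_f>0$ strictly.

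The main obstacle I anticipate is the converse direction of the first equivalence: upgrading a local sign condition at codim-1 faces to global convexity of a continuous piecewise linear function. The key is that convexity along generic transversal segments suffices, so one never has to analyze the more intricate geometry around faces of codimension at least two; isolating this genericity as a small auxiliary lemma (that such transversal segments form a dense, full-measure family, and that restrictions of $f$ to them are 1D CPWL with only codim-1 breakpoints) would streamline the argument.
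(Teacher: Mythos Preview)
Your proposal is correct and follows essentially the same approach as the paper: both identify $w_f(\sigma)$ with the gradient jump across $\sigma$ and read off its sign from the one-dimensional restriction of $f$ along the normal direction at a point in the relative interior of $\sigma$. Your generic-segment plus density argument for the converse direction (nonnegativity of $w_f$ implies global convexity) is in fact more detailed than the paper's proof, which simply asserts without further justification that convexity of a CPWL function is equivalent to local convexity at every codimension-$1$ face; similarly, your treatment of the second claim spells out the equivalence $\PP=\PP_f \Leftrightarrow w_f(\sigma)\neq 0$ for all $\sigma$ more carefully than the paper, which only records the forward implication explicitly.
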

\begin{proof}
The function $f$ is convex if and only if it is locally convex around every $x \in \R^d$. If $x$ is in the relative interior of some $P \in \complex^d$, then this is clearly satisfied since the function is locally affine linear. Now, assume that $f$ is not locally convex around a $x \in \tau$ for some $\tau \in \complex^{d-2}$. In other words, there are $y,z \in \R^d$ and $\lambda \in (0,1)$ such that $f(\lambda z + (1-\lambda)y) > \lambda f(z) + (1-\lambda)f(y)$ and such that the line between $x$ and $y$ intersects $\tau$. Let $L$ be the Lipschitz constant of $f$ and $\delta \coloneqq f(\lambda z + (1-\lambda)y) - \lambda f(z) - (1-\lambda)f(y) > 0$. Let $\varepsilon \coloneqq \frac{\delta}{4L} > 0$. Then there are $v,w \in \R^d$ with $\|v\|,\|w\| < \varepsilon$ such that the line between $z+v$ and $y+w$ does not intersect any face $\tau \in \complex^{d-2}$. But then,

\begin{align*}
    f(\lambda (z+v) + (1-\lambda)(y+w))&\geq f(\lambda z + (1-\lambda)y) - L (\|\lambda v \| + \|(1-\lambda)w\|)
    \\& > f(\lambda z + (1-\lambda)y) - 2L\varepsilon
    \\&= \delta +\lambda f(z) + (1-\lambda)f(y) - 2L\varepsilon
    \\&\geq \delta +  \lambda f(z+v) + (1-\lambda)f(y+w) - 2L\varepsilon - 2L\varepsilon
    \\&=  \lambda f(z+v) + (1-\lambda)f(y+w)
\end{align*}

and there must be a $x'$ in the relative interior of some $\sigma \in \complex^{d-1}$ with $\sigma =P \cap Q$ for $Q \in \complex^d$ such that $f$ is not locally convex around $x'$.
Hence, $f$ is convex if and only if $f$ is locally convex around every
$\sigma \in \complex^{d-1}$, that is, $f$ is locally convex around every $x$ in the relative interior of $\sigma$. For any such $x$, there is a $\lambda >0$ such that  $x + \lambda \cdot e_{P/ \sigma} \in P$ and $x + \lambda \cdot e_{Q/ \sigma} \in Q$, by construction of $e_{P/ \sigma}$ and $e_{Q/ \sigma}$. Recall from the proof of \Cref{lem:edge_rep} that $w_f(\sigma)=\inner{e_{P/\sigma}, a_{P}} + \inner{e_{Q/\sigma}, a_{Q}} $, where $f_{|P}(x) = \langle a_P,x\rangle +b_P$ and $f_{|Q}(x) = \langle a_Q,x\rangle +b_Q$. Since $P,Q \in \complex^d$ and $\| e_{P/\sigma} \| = \|e_{Q/\sigma}\| = 1$, we have that $x$ is the midpoint of $x + \lambda \cdot e_{P/\sigma}$ and $x + \lambda \cdot e_{Q/\sigma}$. Therefore, 
$f$ is convex if and only if $f(x) \leq \frac{1}{2} f(x + \lambda \cdot e_{P/\sigma}) +
\frac{1}{2} f(x + \lambda \cdot e_{Q/ \sigma})$. Equivalently,
        \[0 \leq f(x + \lambda \cdot e_{P/\sigma}) + f(x + \lambda \cdot e_{Q/ \sigma}) - 2f(x)   = \lambda(\langle e_{P/\sigma}, a_{P} \rangle + \langle e_{Q/\sigma}, a_{Q} \rangle)=\lambda \cdot  w_f(\sigma).\]
    If $\complex = \complex_f$, then we have strict local convexity at every $\sigma \in \complex^{d-1}$, which means a strict inequality in the inequality above. 
\end{proof}
\begin{lemma}
\label{lem:VP_cone}
  The set $\VC^+$ of \emph{convex} functions in $\VC$ forms a polyhedral cone and \[\VCP \coloneqq \VC^+/\Aff(\R^d) \] forms a polyhedral cone in $\VCmod$.
\end{lemma}
\begin{proof}
    \Cref{lem:edge_rep} and \Cref{lem:nonnegative} imply that the set of convex functions in $\VCmod$ satisfies 
    \[\VCP \cong \WCP \coloneqq \bigcap_{\sigma \in \complex^{d-1}}\{w \in \WC \mid w(\sigma) \geq 0\}.\] This is a finite intersection of linear inequalities, so $\WC^+$ is a polyhedral cone. Moreover, ``$\cong$'' is a linear isomorphism, which implies that $\VCP$ is a polyhedral cone. The first statement thus follows from $\VC^+ = \VCP+\Aff(\R^d)$.
\end{proof}
\begin{example}[Median]\label{ex:parameterizations}
   In this example, we have a look at the different parameterizations for the function that computes the median of $3$ numbers. To have a $2$-dimensional example, we set $x_3 \coloneqq 0$. The cones of the form \[P_{\pi}=\{x \in \R^2 \mid x_{\pi(1)} \leq x_{\pi(2)} \leq x_{\pi(3)}\}\] are the maximal cones of the braid fan (\Cref{def:braid_arrangement}), where $\pi \colon [3] \to [3]$ is a permutation. Then, let $f \colon \R^2 \to \R$ be the function given by $f_{|P_{\pi}}(x) = x_{\pi(2)}$. The function $f$ has concave breakpoints whenever the median and the higher coordinate change, i.e., at the facets that are given as $\sigma_{\pi,1}=\{x \in \R^2 \mid x_{\pi(1)} \leq x_{\pi(2)} = x_{\pi(3)}\}$
   and convex breakpoints whenever the median and the lower coordinate change, that is, at the facets that are given as \mbox{$\sigma_{\pi,2}=\{x \in \R^2 \mid x_{\pi(1)} = x_{\pi(2)} \leq x_{\pi(3)}\}$.}  Since $\|e_1-e_2\|_2 =\sqrt{2}$ and $\|e_1\|_2=\|e_2\|_2=1$,
   it holds that \[w_f(\sigma_{\pi,1})= 
   \begin{cases}
       -\sqrt{2} & x_{\pi(1)}=x_3 \\ 
       -1 & x_{\pi(1)} \neq x_3
   \end{cases} \quad \text{ and } \quad
   w_f(\sigma_{\pi,2})= 
   \begin{cases}
       \sqrt{2} & x_{\pi(3)}=x_3 \\ 
       1 & x_{\pi(3)} \neq x_3.
   \end{cases}\]
   See \Cref{fig:3dim_median} for a $2$-dimensional illustration.
   \end{example}
\begin{figure}
\centering
\begin{subfigure}[t]{.46\textwidth}
\centering
    \begin{tikzpicture}[scale=1.2]
        \draw[convex] (0,0) -- (0,2) node[above]{$1$};
        \draw[concave, dashed] (0,0) -- (0,-2) node[below]{$-1$};
        \draw[convex] (0,0) -- (2,0) node[right]{$1$};
        \draw[concave, dashed] (0,0) -- (-2,0) node[left]{$-1$};
        \draw[convex] (0,0) -- (-2,-2) node[below]{$\sqrt{2}$};
        \draw[concave, dashed] (0,0) -- (2,2) node[above]{$-\sqrt{2}$};

        \node at (0.9,1.8) {\small $0\leq x_1 \leq x_2$};
        \node at (1.9,0.7) {\small $0\leq x_2 \leq x_1$};
        \node at (-1,1) {\small $x_1 \leq 0 \leq x_2$};
        \node at (1,-1) {\small $x_2 \leq 0 \leq x_1$};
        \node at (-0.9,-1.8) {\small $ x_2 \leq x_1 \leq 0$};
        \node at (-1.9,-0.7) {\small $x_1 \leq x_2 \leq 0$};
    \end{tikzpicture}
      \caption{Parameterization of the median function via the weights on the $1$-dimensional facets.}
      \label{fig:parameterization-weights}
    \end{subfigure}
    \hspace{1em}
\begin{subfigure}[t]{.46\textwidth}
\centering
    \begin{tikzpicture}[scale=1.4]
    \definecolor{convex}{rgb}{0,0.3,0.5}
    \definecolor{concave}{rgb}{0.8,0.2,0}
        \draw[black] (0,0) -- (0,2) ;
        \draw[black] (0,-2) -- (0,0);
        \draw[black] (0,0) -- (2,0);
        \draw[black] (-2,0) -- (0,0);
        \draw[black] (-2,-2) -- (0,0);
        \draw[black] (0,0) -- (2,2);

        \node at (0.75,1.5) {$x_1$};
        \node at (1.5,0.75) {$x_2$};
        \node at (-1,1) {$0$};
        \node at (1,-1) {$0$};
        \node at (-0.75,-1.5) {$x_1$};
        \node at (-1.5,-0.75) {$x_2$};
    \end{tikzpicture}
    \caption{Parameterization of the median function via its linear maps on the maximal polyhedra.}
    \end{subfigure}
     \caption{Two different parameterizations of the function  that computes the median of $\{0,x_1,x_2\}$. In \Cref{fig:parameterization-weights}, the convex breakpoints are colored in blue, and concave breakpoints are dashed and colored in red.}
     \label{fig:3dim_median}
\end{figure}

\subsection{Duality and Newton Polytopes}
In this section, we describe the duality between positively homogeneous convex CPWL functions and polytopes. The \emph{Minkowski sum} of two polytopes $P,Q \subseteq \R^d$ is defined as $P+Q\coloneqq\{p+q:p\in P,q\in Q\}$.

A CPWL function $f \colon \R^d \to \R$ is \emph{positively homogeneous} if $f(\lambda x) = \lambda f(x)$ for all $x \in \R^d$ and $\lambda \geq 0$.  If $f$ is convex, then it is positively homogeneous if and only if $f(0)=0$, and $\complex_f$ is a polyhedral fan. In this case, it can be written as $f(x) = \max_{i \in [k]} \inner{x,a_i}$, where $a_i \in \R^d$. 
We define the \emph{Newton polytope} of such a function $f$ as the convex hull $\Newt(f) = \conv(a_1,\dots,a_k)$. Conversely, the support function $f_P$ of a polytope $P$ is given by $f_P(x) = \max_{y \in P} \inner{y,x}$. See \Cref{fig:newton_polytope} for an example.

\begin{theorem}[see, e.g., \citet{schneider2013convex,hertrich2021towards}]
\label{th:iso_polytopes_ccpwl}
    For all  convex positively homogeneous CPWL functions $f\colon \R^d \to \R$, it holds that $f$ is the \emph{support function} of $\Newt(f)$.
    Moreover, it holds that $\Newt(f) + \Newt(g) =\Newt(f+g)$ and $\Newt(\lambda f) = \lambda \Newt(f)$ for all convex positively homogeneous CPWL functions $f,g \colon \R^d \to \R$ and all $\lambda \geq 0$.
\end{theorem}

We now give an interpretation of $\complex_f$ and $w_f$ 
in terms of the Newton polytope.
Given any $d$-dimensional polytope $P \subset \R^{d}$, the (outer) normal cone of a $k$-dimensional face $F$ of $P$ is the $(d-k)$-dimensional cone
\begin{equation}\label{eq:normal-cone}
    N_F(P) = \left\{x \in \R^{d} \ \middle| \ \inner{z,x} = \max_{y \in P} \inner{y,x} \text{ for all } z \in F \right\}. 
\end{equation}
In particular, if $P = \Newt(f)$ and $v$ is a vertex of $P$, then the description of the normal cone turns into
\[
    N_v(\Newt(f)) = \left\{x \in \R^{d} \ \middle| \ \inner{v,x} = f(x) \right\},
\]
and agrees with a polyhedron in $\complex_f^d$.
The \emph{normal fan} of a polytope is the collection of normal cones over all faces. Thus, for positively homogeneous convex functions, the polyhedral complex $\complex_f$ agrees with the normal fan of $\Newt(f)$, and the number of linear pieces of $f$ equals the number of vertices of $\Newt(f)$. An example is given in \Cref{fig:newton_polytope}.
The duality between $\complex_f$ and $\Newt(f)$ also establishes a bijection between facets $\sigma \in \complex_f^{d-1}$ and edges of $\Newt(f)$, and for the corresponding weight function $w_f$ holds that $w_f(\sigma)$ equals the Euclidean length of the edge that is dual to $\sigma$.
This correspondence extends to general convex CPWL functions, where $\complex_f$ is a complex which is dual to a regular polyhedral subdivision of $\Newt(f)$,
and $w_f$ corresponds to lengths of edges in this subdivision (see e.g. \cite[Chapter 3.4]{maclagan2015introduction}).
  \begin{figure}
      \centering
      \begin{tikzpicture}[scale=2]
      \begin{scope}[scale = 0.5,shift={(0,0)}]
     \draw[convex] (2,0)-- (0,0) -- (0,2);
        \draw[convex] (0,0) -- (-2,-2);
        \node at (1.5,1) {$x_1+x_2$};
          \node at (0.5,-1) {$x_1$};
      \node at (-1,0.5) {$x_2$};
  \draw[convex] (0,0)-- (0,2) node[above] {$1$};
        \draw[convex] (0,0) -- (-2,-2) node[below] {$\sqrt{2}$};
        \draw[convex] (0,0)-- (2,0) node[right] {$1$};

        \end{scope}

          \begin{scope}[shift={(3,-0.5)}] 
        \draw[convex] (1,1) -- (1,0) node[below]{\color{black}{$e_1$}};
        \draw[convex] (1,0) -- (0,1) node[left]{\color{black}{$e_2$}};
        \draw[convex] (0,1) -- (1,1) node[right]{\color{black}{$e_1+e_2$}};
        \end{scope}
          \begin{scope}[shift={(3,-0.5)}] 
        \node[convex] at (0.2,0.2) {\small{$\sqrt{2}$}};
        \draw[convex] (1,1) -- (1,0) node[midway, right] {\small{$1$}};
    \draw[convex] (1,0) -- (0,1);
    \draw[convex] (0,1) -- (1,1) node[midway, above] {\small{$1$}};
      
        \end{scope}
      \end{tikzpicture}
      \caption{On the left, the positively homogeneous CPWL function $\max \{x_1,x_2,x_1+x_2$\} and its unique coarsest compatible complex. On the right, the Newton polytope $\conv(e_1,e_2,e_1+e_2)$ of the function. The fan is the normal fan of this polytope and the edge lengths correspond to the weights on the facets induced by the CPWL function.}
      \label{fig:newton_polytope}
  \end{figure}
\subsection{Deformation Cones}
We call a polytope $Q$ a \emph{deformation} of a polytope $P$ if there exists a polytope $R$ and a scalar $\lambda > 0$ such that $\lambda P = Q + R$. Equivalently, $Q$ is a deformation of $P$ if and only if the normal fan of $Q$ is a coarsening of the normal fan of $P$.

The \emph{deformation cone} $\defcone(P)$ of a polytope $P$ consists of all its deformations, which can be parameterized by translation and edge lengths. Specifically, assign a nonnegative real value to each edge of $P$ such that for all $2$-faces form actual polygons, that is, the balancing condition is met. The resulting polytope is a deformation of $P$, and all deformations of $P$ arise in this way. The linear condition that the edge lengths form polygons for the $2$-faces is equivalent to the condition that the corresponding weight function is balanced \citep{McMullen1996}.  In other words, if we mod out translation, the deformation cone modulo translation $\defconemod(P)$ becomes a pointed polyhedral cone, isomorphic to $\WP{\complex}$ where $\complex$ is the normal fan of $P$. The following lemma summarizes the discussion.
\begin{lemma}
\label{lem:iso_defcone_WC+_VC+}
    Let $P$ be a polytope and $\complex$ the normal fan of $P$. Then, the following three cones are linearly isomorphic:
    \begin{enumerate}
        \item the deformation cone modulo translation $\defconemod(P)$
        \item the cone $\WP{\complex}$ of all balanced nonnegative weights on $\complex$ and
        \item the cone $\VPmod{\complex}$ of convex CPWL functions compatible with $\complex$ modulo affine linear functions.
    \end{enumerate}
\end{lemma}
 Although general positively homogeneous CPWL functions may not admit a Newton polytope due to potential concave folds (which would correspond to negative edge lengths), the isomorphism between Newton polytopes and positively homogeneous convex CPWL functions extends to a linear isomorphism through the notion of virtual polytopes, which are formal Minkowski differences of polytopes.
\subsection{Virtual polytopes}
We now review the notion of virtual polytopes adapted to our setting. For a thorough introduction to virtual polytopes, we refer to \citep{panina2015virtual}. The set of polytopes $\polytopes_d$ in $\R^d$ together with Minkowski addition forms a commutative semigroup, having the $0$-dimensional polytope that consists only of the zero vector as neutral element. For any polytopes $P,Q,R \in \polytopes_d$, there holds the cancellation law, which says that if $P+Q=R+Q$, then $P=R$. This allows to construct the Grothendieck group $\vpolytopes_d \coloneqq\{(P, Q) : P, Q \in \polytopes_d\} / \sim$ where the equivalence relation $\sim$ is defined by
\begin{equation*}
    (P_1, Q_1) \sim (P_2, Q_2) \iff  P_1 + Q_2 = P_2 + Q_1.
\end{equation*}
We write the equivalence class represented by $(P,Q)$ as $P-Q$ and call it a \emph{virtual polytope}. Every CPWL function $f$ is the difference of two convex functions $g-h$ and it is easy to check that the virtual polytope $\Newt(g) - \Newt(h)$ is independent of the choice of convex functions $g$ and $h$ allowing to define $\Newt(f) \coloneqq \Newt(g) - \Newt(h)$.
 The set of virtual polytopes is a vector space with the scalar multiplication given by $\lambda  (P-Q) \coloneqq \lambda P - \lambda Q$ which is isomorphic to the vector space $\cpwl^0_d$ of positively homogeneous CPWL functions.
 
\begin{proposition}
    The map $\Newt \colon \cpwl^0_d \to \vpolytopes_d$, $f \mapsto \Newt(f)$ is a vector space isomorphism.
\end{proposition}
\begin{proof}
    Follows immediately from \Cref{th:iso_polytopes_ccpwl}.
\end{proof}

\section{Decomposition Polyhedra}\label{sec:compatible}

In this section, we introduce and generally study the main concept of this paper, decomposition polyhedra. These polyhedra describe the set of possible decompositions of a CPWL function $f$ into a difference $f=g-h$ that are compatible with a given polyhedral complex.

\begin{definition}
    For a CPWL function $f$ and a polyhedral complex $\complex$, the \emph{decomposition polyhedron} of~$f$ with respect to~$\complex$ is
    $
        \decomp{\complex}{f}\coloneqq\{(g,h)\mid g,h\in\VCP, f=g-h\}.
    $
\end{definition}

The projection $\pi((g,h))=g$ induces an isomorphism between $\decomp{\complex}{f}$ and $\pi(\decomp{\complex}{f})$ since
$
    \decomp{\complex}{f} = \{(g,g-f) \mid g \in \pi(\decomp{\complex}{f}) \}.
$
We now show that $\decomp{\complex}{f}$ is indeed a polyhedron, which arises as the intersection of two shifted copies of a cone.
\begin{proposition}\label{prop:shifted-cones}
    The set $\decomp{\complex}{f}$ is a polyhedron that arises as the intersection of convex functions with the affine hyperplane $H_f = \{(g,h) \mid f = g-h\}$, namely
    \[
        \decomp{\complex}{f} = (\VCP \times \VCP) \cap H_f.
    \]
    Under the bijection $\pi$, the decomposition polyhedron is the intersection of two shifted copies of the polyhedral cone $\VCP$. More specifically,
     $
        \pi(\decomp{\complex}{f}) = \VCP \cap (\VCP + f).
    $
\end{proposition}
\begin{proof}
    For the set of decompositions holds
    \begin{align*}
        \decomp{\complex}{f} 
        = \{(g,h) \mid g \in \VCP, h \in \VCP, f = g - h\} 
        = (\VCP \times \VCP) \cap H_f.
    \end{align*}
    For the projection, we have
    \begin{align*}
        \pi(\decomp{\complex}{f})
        &= \pi(\{(g,g-f) \mid g \in \VCP, g-f \in \VCP\})\\
        &= \{g \mid g \in \VCP, g \in f+\VCP\} 
        = \VCP \cap (f+ \VCP).\qedhere
    \end{align*}
\end{proof}

\begin{figure}[b]
    \centering
 \begin{tikzpicture}[scale=0.8]

    \draw[black] (0,3) -- (0,0) -- (3,0);
    \fill[mpblue,opacity=0.1] (0,3) -- (0,0) -- (3,0) -- cycle;
    \node[mpblue] at (1,4) {$\VCP$};
     
    \draw[->, thick, dashed] (0,0) -- (1,-1.5) node[midway,left]{$f$};

    \begin{scope}[shift={(1,-1.5)}]
        \draw[black] (0,3) -- (0,0) -- (3,0);
    \fill[mpred,opacity=0.1] (0,3) -- (0,0) -- (3,0)-- cycle;
\node[mpred] at (4,1) {$f+\VCP$};
    \end{scope}

     \begin{scope}[shift={(1,0)}]
        \draw[black,very thick] (0,3) -- (0,0) -- (3,0);
    \fill[mpgreen] (0,3) -- (0,0) -- (3,0)-- cycle;
    \node[mpgreen] at (2.3,2.3) {$\pi(\decomp{\complex}{f})$};
    \fill[black] (0,0);
    \end{scope}

\end{tikzpicture}
\caption{An illustration of the decomposition polyhedra arising as the intersection of two shifted cones.}
\end{figure}

\begin{remark}
    Under the isomorphism of Lemma~\ref{lem:edge_rep}, we identify $\decomp{\complex}{f}$ with the polyhedron $\{(w_g,w_h) \in \WCP\times \WCP \mid w_g-w_h=w_f\}$ in $\WC \times \WC$ and $\pi(\decomp{\complex}{f})$ with the polyhedron $\{w_g \in \WCP \mid w_g \geq w_f\}$, where $\WCP = \{w \in \WC \mid w \geq 0\}$.
\end{remark}

For the remainder of this section, we analyze the faces of the polyhedron $\decomp{\complex}{f}$ in terms of the properties of the corresponding decompositions.

\begin{definition}
    A decomposition $(g,h)\in\decomp{\complex}{f}$ is called \emph{reduced}, if there is no convex function $\phi\in\VCP\setminus\{0\}$ such that $g-\phi$ and $h-\phi$ are both convex.
\end{definition}

If a decomposition is not reduced, then we can obtain a ``better'' decomposition by simultaneously simplifying both $g$ and $h$ through subtracting a convex function $\phi$. Hence, it makes sense to put a special emphasis on reduced decompositions. Conveniently, the following theorem links this notion to the geometry of $\decomp{\complex}{f}$.

\begin{theorem}\label{th:decomp-reduced}
    A decomposition $(g,h)\in\decomp{\complex}{f}$ is reduced if and only if $(g,h)$ is contained in a bounded face of $\decomp{\complex}{f}$.
\end{theorem}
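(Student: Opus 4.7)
The plan is to work via the recession cone of $\decomp{\PP}{f}$ and the minimal face $F_{(g,h)}$ of $\decomp{\PP}{f}$ containing $(g,h)$, using the standard fact that $(g,h)$ lies in some bounded face if and only if $F_{(g,h)}$ itself is bounded (every face containing $(g,h)$ contains $F_{(g,h)}$, and no bounded face can contain an unbounded one). By \Cref{th:shifted-cones}, $\pi(\decomp{\PP}{f}) = \VPP \cap (\VPP + f)$ is the intersection of two translates of the cone $\VPP$ and therefore has recession cone $\VPP$. Pulling this back through $\pi$, the recession cone of $\decomp{\PP}{f}$ is the diagonal $\Delta = \{(\phi,\phi) : \phi \in \VPP\}$.

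For the direction \emph{not reduced} $\Rightarrow$ \emph{not in any bounded face}, suppose $\phi \in \VPP \setminus \{0\}$ satisfies $g - \phi,\, h - \phi \in \VPP$. For every $s \geq -1$, write $g + s\phi = (g - \phi) + (s+1)\phi \in \VPP$ and $h + s\phi = (h - \phi) + (s+1)\phi \in \VPP$ as sums of elements of the cone $\VPP$. Hence the ray $R := \{(g + s\phi,\, h + s\phi) : s \geq -1\}$ is contained in $\decomp{\PP}{f}$, and $(g,h)$ lies in its relative interior at $s = 0$. Any face of $\decomp{\PP}{f}$ containing a relative interior point of $R$ must contain all of $R$, and hence is unbounded.

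Conversely, suppose $F_{(g,h)}$ is unbounded, so it has a nonzero recession direction $d$. As $d$ is also a recession direction of $\decomp{\PP}{f}$, it must lie in $\Delta$, say $d = (\phi, \phi)$ with $\phi \in \VPP \setminus \{0\}$. By the standard polyhedral characterization that recession directions of $F_p$ are precisely the recession directions of $P$ lying in the lineality space of the tangent cone at $p$, there exists $\epsilon > 0$ with $(g,h) - \epsilon(\phi,\phi) \in \decomp{\PP}{f}$, i.e., $g - \epsilon\phi,\, h - \epsilon\phi \in \VPP$. Setting $\phi' := \epsilon\phi$, which lies in $\VPP \setminus \{0\}$ because $\VPP$ is a cone, witnesses that $(g,h)$ is not reduced.

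The main obstacle I anticipate is cleanly invoking the polyhedral-geometry characterization of the minimal face via the lineality of the tangent cone at the point; once that standard fact is in place, the forward direction is an explicit ray construction and the scaling step in the converse direction is immediate from $\VPP$ being a cone.
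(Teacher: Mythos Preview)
Your proof is correct and follows essentially the same approach as the paper: both identify the recession cone of the decomposition polyhedron and use the standard characterization that a point lies in a bounded face if and only if one cannot move backwards along any nonzero recession direction while remaining in the polyhedron. The only cosmetic differences are that the paper isolates this polyhedral fact as a standalone proposition and works in the projected polyhedron $\pi(\decomp{\PP}{f})$ with recession cone $\VPP$, whereas you argue directly in $\decomp{\PP}{f}$ with the diagonal recession cone $\{(\phi,\phi):\phi\in\VPP\}$; also, for the converse step the paper simply uses that $(g,h)$ lies in the relative interior of its minimal face (so one can step back along any direction parallel to that face), which is a slightly lighter justification than your appeal to the lineality of the tangent cone, but the content is identical.
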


The statement follows from a more general statement about polyhedra. Recall that any polyhedron $P$ can written as the Minkowski sum 
\[
    P = Q + \recc(P)  = \{ q + c \mid q \in Q, c \in \recc(P) \}
\]
where $Q$ is a bounded polytope, and $\recc(P)=\{v \in \R^d \mid x+tv \in P $ for all $ x\in P, t\geq0\}$ the \emph{recession cone} of $P$, see~\cite{ziegler_lecturespolytopes}. 

\begin{proposition}\label{th:bounded-faces}
    A point $x \in P$ is contained in a bounded face of $P$ if and only if $x - c \not\in P \ \forall c \in \recc(P) \setminus \{0\}$.
\end{proposition}
\begin{proof}
    Any nonempty face of the polyhedron $P$ is of the form
    \[
        P^u = \{ x \in P \mid \langle x, u \rangle \geq \langle y, u \rangle \ \forall y \in P \},
    \]
    and for Minkowski sums holds $P^u = Q^u + \recc(P)^u$.
    Let $x \in P$ be a point contained in a bounded face $P^u$ of $P$. Since $P^u$ is bounded, we have that $P^u = Q^u + \recc(P)^u$ with $\recc(P)^u = \{0\}$ being the unique bounded face of $\recc(P)$. Thus, $\langle c, u \rangle < \langle 0, u \rangle$ for all $c \in \recc(P) \setminus \{0\}$. This implies that
    $\langle x - c, u \rangle = \langle x, u \rangle - \langle c, u \rangle > \langle x, u \rangle$ and therefore, by the definition of $P^u$, we have that $x -c \not \in P$.

    Conversely, suppose that $x \in P$ is not contained in a bounded face. We want to show that there exists some direction $c \in \recc(P) \setminus \{0\}$ such that $x-c \in P$. Since $x$ is not contained in a bounded face, it is contained in the relative interior of an unbounded face $F$ (where possibly $F=P$). Since the face is unbounded, it contains a ray $x + \R_{\ge 0} c$ for some direction $c \in \recc(P)$. On the other hand, since $x \in \operatorname{int}(F)$, we have that $x - \varepsilon c \in F$ for $\varepsilon > 0$ small enough. As $\recc(P)$ is a cone, we have that $\varepsilon c \in \recc(P)$, which finishes the proof.
\end{proof}
Having this at hand, we can prove \Cref{th:decomp-reduced}.
\begin{proof}[Proof of \Cref{th:decomp-reduced}]
    Since $\pi$ induces a bijection between $\decomp{\complex}{f}$ and its image, this is also a bijection between bounded faces. By \Cref{prop:shifted-cones}, $\pi(\decomp{\complex}{f})$ is a polyhedron with recession cone $\VCP$. \Cref{th:bounded-faces} implies that $g$ is contained in a bounded face if and only if there exists no convex function $\phi \in \VCP \setminus \{0\}$ such that $g - \phi \in \pi(\decomp{\complex}{f})$. Therefore, $\pi^{-1}(g) = (g,h)$ with $ h = g-f$ is contained in a bounded face of  $\decomp{\complex}{f}$ if and only if there is no $\phi \in \VCP \setminus \{0\}$ such that $(g - \phi, g-f-\phi) = (g - \phi, h-\phi) \in \decomp{\complex}{f}$. Since $(g-\phi) - (h- \phi) = f$, this is equivalent to $g - \phi$ or $h -\phi$ being nonconvex, i.e., $(g,h)$ is reduced.
\end{proof}

Next, we describe the face relations of decomposition polyhedra by coarsenings of the corresponding functions.

\begin{definition}
We call a convex function $g \in \VCP$ a \emph{coarsening} of another convex function $g' \in \VCP$ if the unique coarsest polyhedral complex $\complex_g$ of $g$ is a coarsening of the unique coarsest polyhedral complex $\complex_{g'}$. The coarsening is called \emph{non-trivial} if $\complex_g \neq \complex_{g'}$.

For a pair of convex CPWL functions $(g,h)$, we call $(g',h')$ a coarsening of $(g,h)$ if $g-h=g'-h'$ and $g'$ and $h'$ are coarsenings of $g$ and $h$ respectively. The coarsening is called non-trivial if $(\complex_g,\complex_h) \neq (\complex_{g'},\complex_{h'})$. For a function $f \in \VC$, let $\supp_\complex(f)= \{\sigma \in \complex^{d-1} \mid w_f(\sigma) \neq 0\}$.
\end{definition}

Note that the entire definition does not depend on the underlying complex $\complex$ and the corresponding considered set $\VCP$.

\begin{lemma}
\label{lem:unique_coarsest}
   A convex function $g' \in \VCP$ is a coarsening of $g\in \VCP$ if and only if $\supp_\complex({g'}) \subseteq \supp_\complex(g)$. The coarsening is non-trivial if and only if $\supp_\complex({g'}) \subset \supp_\complex(g)$.
\end{lemma}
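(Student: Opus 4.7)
The plan is to reduce both claims to a single structural identification of the coarsest compatible complex $\PP_g$ of a convex function $g \in \VPP$ directly in terms of its support: I will show that the maximal cells of $\PP_g$ are exactly the closures of the connected components of $\R^n \setminus \bigcup_{\sigma \in \supp_\PP(g)} \sigma$. Once this is established, the lemma drops out as a direct comparison of supports.

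To establish the characterization, I first observe that for adjacent maximal cells $P_1, P_2 \in \PP^n$ sharing a facet $\sigma \in \PP^{n-1}$, the weight $w_g(\sigma)$ vanishes if and only if $g$ is represented by the same affine formula on $P_1$ and on $P_2$. This uses the explicit construction behind Lemma~\ref{lem:edge_rep} together with Proposition~\ref{lem:nonnegative}, which rules out any cancellation between convex and concave bends because $w_g \geq 0$. Hence any chain of zero-weight facets connecting max-cells of $\PP$ forces a common affine representation of $g$ on the union. Taking any connected component $C$ of $\R^n \setminus \bigcup_{\sigma \in \supp_\PP(g)} \sigma$, the function $g$ therefore agrees with a single affine piece $g_i$ throughout $C$, and since $g = \max_j g_j$ is convex, the locus $\{x \mid g(x) = g_i(x)\}$ is automatically a convex polyhedron; it coincides with $\bar C$ and is a maximal cell of $\PP_g$. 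Conversely, a positive-weight facet separates two distinct affine pieces of $g$ and must therefore appear in every compatible complex, including $\PP_g$.

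With this identification, both claims follow immediately. If $\supp_\PP(g') \subseteq \supp_\PP(g)$, then the set whose complement is decomposed into connected components only shrinks when passing from $g'$ to $g$, so every max-cell of $\PP_{g'}$ is a union of max-cells of $\PP_g$; this means $\PP_g$ refines $\PP_{g'}$, i.e., $g'$ is a coarsening of $g$. For the converse, if $g'$ is a coarsening of $g$, then any facet $\sigma$ separating two max-cells of $\PP_{g'}$ must also separate two max-cells of $\PP_g$, forcing $\sigma \in \supp_\PP(g)$ whenever $\sigma \in \supp_\PP(g')$. The non-triviality statement then reduces to the observation that the support uniquely determines $\PP_g$, so strict inclusion of supports forces a strictly different complex.

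The main obstacle I anticipate is the convexity step inside the characterization: I need every connected component of $\R^n \setminus \bigcup_{\sigma \in \supp_\PP(g)} \sigma$ to be a single convex polyhedron, so that it genuinely serves as a max-cell of $\PP_g$. Without the nonnegativity guaranteed by Proposition~\ref{lem:nonnegative}, a zero weight could reflect a balance between a positive convex bend and a negative concave bend at $\sigma$, in which case $g$ would still have a genuine breakpoint there and the proposed connected-component description of $\PP_g^n$ would fail. Once nonnegativity is exploited to make this geometric identification clean, the remainder of the proof is bookkeeping with set inclusions.
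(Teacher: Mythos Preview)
Your argument is correct and follows essentially the same route as the paper: both identify the breakpoint locus $B(g)=\bigcup_{\sigma\in\supp_\PP(g)}\sigma$, characterize the maximal cells of $\PP_g$ as the closures of the connected components of $\R^n\setminus B(g)$, and then read off the coarsening statement from the inclusion of supports. If anything, you supply more justification than the paper does for why each connected component is a single convex polyhedron (via the convex sublevel set $\{x\mid g(x)=g_i(x)\}$), whereas the paper simply asserts this.

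One small correction to your ``main obstacle'' paragraph: a zero weight $w_g(\sigma)=0$ at a single facet $\sigma$ can never reflect a cancellation between a convex and a concave bend. By the construction behind Lemma~\ref{lem:edge_rep}, $w_g(\sigma)=\langle e_{P/\sigma},a_P-a_Q\rangle$ with $a_P-a_Q\in\Span(e_{P/\sigma})$, so $w_g(\sigma)=0$ forces $a_P=a_Q$ regardless of convexity. The genuine role of convexity (Proposition~\ref{lem:nonnegative}) is exactly what you use it for elsewhere: it guarantees that $\PP_g$ exists as a well-defined coarsest complex and that each connected component is convex, so that your identification $\bar C=\{x\mid g(x)=g_i(x)\}$ goes through. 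This misconception does not affect the validity of your proof.
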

\begin{proof}
    First note that $B(g) \coloneqq \bigcup\limits_{\sigma \in \supp_\complex(g)}\sigma$ 
    are exactly the points where $g$ is not affine linear. Hence, the closures of the connected components of the complement of $B(g)$ are the maximal polyhedra of the unique coarsest polyhedral complex $\complex_g$ compatible with $g$.

    Let $\supp_\complex({g'}) \subseteq \supp_\complex(g)$. Equivalently, for the complement holds $(\R^d \setminus B(g)) \subseteq (\R^d \setminus B(g'))$, and the same holds for the closures of the (open) connected components, i.e., the maximal faces in $\complex^d_{g}$ and $\complex^d_{g'}$. In other words, this is equivalent to that for every face $P \in \complex_g^d$ there exists some $P' \in \complex_{g'}^d$ such that $P\subseteq P'$. Thus, $\supp_\complex({g'}) \subseteq \supp_\complex(g)$ if and only if $\complex_{g'}$ is a coarsening of $\complex_g$.

    The coarsening is non-trivial if and only if there is a $P' \in \complex_{g'}^d$ such that there is no $P \in \complex_{g}^d$ with $P' \subseteq P$. This is the case if and only if there is a $\sigma \in \complex_{g}^{d-1}$ that intersects the interior of $P'$, which occurs if and only if $\sigma \in \supp_\complex(g) \setminus \supp_\complex(g')$.
\end{proof}

We now prove the following proposition that relates coarsenings of the decompositions to inclusion relations of the minimal faces that contain the decompositions.
\begin{proposition}
\label{prop:face_relation}
   For $(g,h) \in \decomp{\complex}{f}$, let $F$ be the minimal
   face of $\decomp{\complex}{f}$ containing $(g,h)$. 
   Then $(g',h')$ is a coarsening of $(g,h)$ if and only if there is a face $G$ of $\decomp{\complex}{f}$ with $G \subseteq F$ such that $(g',h') \in G$. The coarsening is non-trivial if and only if $G \subset F$.
\end{proposition}
\begin{proof} For a face $F$, let $\mathcal{G}_F= \{\sigma \in \complex^{d-1} \mid w_g(\sigma)=0$ for all $(w_g,w_h) \in F\}$ and $\mathcal{H}_F= \{\sigma \in \complex^{d-1} \mid w_h(\sigma)=0$ for all $(w_g,w_h) \in F\}$ be the set of facets where the corresponding inequalities ensuring convexity of the functions $g$ and $h$ are tight.  It is not hard to see that $G \subseteq F$ if and only if $\mathcal{G}_F \subseteq \mathcal{G}_G$ and $\mathcal{H}_F \subseteq \mathcal{H}_G$.
    In other words, if $(g',h')$ is contained in a face $G \subseteq F$, then one can move from $(g,h)$ to $(g',h')$ without losing tight inequalities. Hence, Lemma~\ref{lem:unique_coarsest} implies that  $(g',h')$ is a coarsening of $(g,h)$. If $G \subset F$, then either $\mathcal{G}_F \subset \mathcal{G}_G$ or $\mathcal{H}_F \subset \mathcal{H}_G$. Thus, another inequality becomes tight when moving from $(g,h)$ to $(g',h')$ implying that the coarsening is non-trivial.
    
    For the converse direction, let $(g',h')$ be a coarsening of $(g,h)$, which in particular means that $g'$ and $h'$ are compatible with $\complex$. Hence, $f=g'-h'$ implies that $(g',h')\in\decomp{\complex}{f}$. Now, assume that there is no face $G\subseteq F$ such that $(g',h') \in G$. Then the line between $(g,h)$ and $(g',h')$ is not contained in $F$. Thus, a tight inequality gets lost when moving from $(g,h)$ towards $(g',h')$. Hence, without loss of generality, there is a $\sigma \in \supp_\complex(g')\setminus \supp_\complex(g)$, which according to Lemma~\ref{lem:unique_coarsest} is a contradiction to  $(g',h')$ being a coarsening of $(g,h)$. 
\end{proof}

\begin{theorem}
\label{thm:vertex_characterization}
    Let  $(g,h)\in\decomp{\complex}{f}$, then the following three statements are equivalent:
    \begin{enumerate}[itemsep=-1mm]
        \item \label{thm:vertex_characterization1}There is no non-trivial coarsening of $(g,h)$.
        \item \label{thm:vertex_characterization2} $(g,h)$ is a vertex of $\decomp{\complex}{f}$.
        \item \label{thm:vertex_characterization3} $(g,h)$ is a vertex of $\decomp{\mathcal{Q}}{f}$ for all polyhedral complexes $\mathcal{Q}$ compatible with $g$ and $h$.
    \end{enumerate}
\end{theorem}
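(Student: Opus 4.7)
The plan is to work in the edge-weight parameterization (\Cref{lem:edge_rep}): by the remark following \Cref{th:shifted-cones}, $\pi(\decomp{\PP}{f})$ is identified with the polyhedron
\[
\{w_g \in \WP : w_g \geq 0 \text{ and } w_g \geq w_f\} \subseteq \WP,
\]
cut out by the linear inequalities $w_g(\sigma) \geq 0$ and $w_g(\sigma) - w_f(\sigma) = w_h(\sigma) \geq 0$ indexed by $\sigma \in \PP^{n-1}$. I define
\[
S \coloneqq \{\sigma \in \PP^{n-1} : w_g(\sigma) > 0 \text{ and } w_h(\sigma) > 0\}
\]
as the set of facets where no constraint is active at $(g,h)$, and write $\mathcal{W}_S \coloneqq \{v \in \WP : v(\sigma) = 0 \text{ for all } \sigma \notin S\}$, a linear subspace of $\WP$.

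First I would establish (\ref{thm:vertex_characterization1}) $\Leftrightarrow$ (\ref{thm:vertex_characterization2}). The tangent cone to $\pi(\decomp{\PP}{f})$ at $w_g$ consists of those $v \in \WP$ with $v(\sigma) \geq 0$ whenever $w_g(\sigma) = 0$ or $w_h(\sigma) = 0$, and its lineality space is precisely $\mathcal{W}_S$. Hence $(g,h)$ is a vertex if and only if $\mathcal{W}_S = \{0\}$. Translating the notion of coarsening via \Cref{lem:unique_coarsest}: a pair $(g',h') \in \VPP \times \VPP$ with $g'-h' = g-h$ is a coarsening of $(g,h)$ precisely when, setting $v \coloneqq w_{g'} - w_g = w_{h'} - w_h$, one has $v \in \mathcal{W}_S$ together with $w_g + v \geq 0$ and $w_h + v \geq 0$; it is non-trivial iff $v$ sends some positive coordinate of $w_g$ or $w_h$ to $0$. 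A non-trivial coarsening therefore yields a nonzero element of $\mathcal{W}_S$, giving (\ref{thm:vertex_characterization2}) $\Rightarrow$ (\ref{thm:vertex_characterization1}). Conversely, given any nonzero $v \in \mathcal{W}_S$, after replacing $v$ by $-v$ if necessary I may assume $v(\sigma) < 0$ for some $\sigma \in S$, and
\[
t^* \coloneqq \sup\{t > 0 : w_g + tv \geq 0 \text{ and } w_h + tv \geq 0\}
\]
is a finite positive scalar at which some inequality first becomes tight; the resulting $(w_g + t^* v,\, w_h + t^* v)$ corresponds to a non-trivial coarsening, giving (\ref{thm:vertex_characterization1}) $\Rightarrow$ (\ref{thm:vertex_characterization2}).

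For (\ref{thm:vertex_characterization1}) $\Leftrightarrow$ (\ref{thm:vertex_characterization3}) I would use that condition (\ref{thm:vertex_characterization1}) is intrinsic to $(g,h)$: it refers only to the coarsest complexes $\PP_g, \PP_h$, not to the ambient $\PP$. Moreover, any polyhedral complex $\mathcal{Q}$ compatible with $g$ and $h$ must refine $\PP_g$ and $\PP_h$, so every coarsening $(g',h')$ automatically lies in $\mathcal{V}_\mathcal{Q}^+ \times \mathcal{V}_\mathcal{Q}^+$. The (\ref{thm:vertex_characterization1}) $\Leftrightarrow$ (\ref{thm:vertex_characterization2}) argument above then applies verbatim with $\mathcal{Q}$ in place of $\PP$, yielding (\ref{thm:vertex_characterization1}) $\Leftrightarrow$ ``$(g,h)$ is a vertex of $\decomp{\mathcal{Q}}{f}$''. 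Quantifying over all such $\mathcal{Q}$ produces (\ref{thm:vertex_characterization1}) $\Leftrightarrow$ (\ref{thm:vertex_characterization3}), while (\ref{thm:vertex_characterization3}) $\Rightarrow$ (\ref{thm:vertex_characterization2}) is immediate by specializing to $\mathcal{Q} = \PP$.

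The main obstacle is the scaling step that converts a generic nonzero element of $\mathcal{W}_S$ into a non-trivial coarsening: one must choose both the sign of $v$ and the maximal positive factor $t^*$ so that at least one of the four inequalities $w_g + tv \geq 0$, $w_h + tv \geq 0$ becomes tight at a facet in $\supp_\PP(g) \cup \supp_\PP(h)$, while simultaneously preserving feasibility on both sides. A secondary subtlety is verifying that the characterization via $\mathcal{W}_S$ is genuinely intrinsic to $(g,h)$ and not an artifact of $\PP$; this reduces, via \Cref{lem:unique_coarsest}, to the fact that the positivity loci of $w_g$ and $w_h$ in any compatible complex are determined by the coarsest complexes $\PP_g$ and $\PP_h$ alone.
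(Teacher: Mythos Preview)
Your argument is correct and essentially matches the paper's: both hinge on the observation that the active constraints at $(g,h)$ in $\decomp{\PP}{f}$ record exactly $\supp_\PP(g)$ and $\supp_\PP(h)$, which via \Cref{lem:unique_coarsest} is what governs coarsenings. The paper packages this through an auxiliary face-lattice statement (\Cref{prop:face_relation}) identifying coarsenings of $(g,h)$ with points in subfaces of the minimal face containing $(g,h)$, whereas you work directly with the lineality space $\mathcal{W}_S$ of the tangent cone and supply the explicit scaling step to reach a strictly smaller face; the passage to an arbitrary compatible $\mathcal{Q}$ is then handled identically in both approaches, by noting that condition~(\ref{thm:vertex_characterization1}) is intrinsic to $(g,h)$ and reapplying the same argument.
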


\begin{proof}
    \ref{thm:vertex_characterization1} and \ref{thm:vertex_characterization2} are equivalent by Proposition~\ref{prop:face_relation}. \ref{thm:vertex_characterization3} trivially implies \ref{thm:vertex_characterization2}. Hence, it remains to show \ref{thm:vertex_characterization1} $\implies$ \ref{thm:vertex_characterization3}. Assume that there is a polyhedral complex $\mathcal{Q}$ compatible with $g$ and $h$ such that $(g,h)$ is not a vertex of $\decomp{\mathcal{Q}}{f}$. Then there is a vertex $(g',h')$ of $\decomp{\mathcal{Q}}{f}$ contained in the face containing $(g,h)$. By Proposition~\ref{prop:face_relation}, it follows that $(g',h')$ is a non-trivial coarsening of $(g,h)$.
\end{proof}

\begin{definition}\label{def:mini}
   A decomposition $(g,h)\in\decomp{\complex}{f}$ is called \emph{minimal},
    if it is not \emph{dominated} by any other decomposition, that is, if there is no other decomposition $(g',h')\in\decomp{\complex}{f}$ where $g'$ has at most as many pieces as $g$, $h'$ has at most as many pieces as $h$, and one of the two has strictly fewer pieces. See \Cref{fig:minimality} for a visualization.
\end{definition}
\begin{figure}[ht]
\centering
\begin{subfigure}[t]{.47\textwidth}
    \centering
    \begin{tikzpicture}[scale=0.44]
    \draw[gray,very thin] (0,0) grid (8,8);
    \draw[->] (0,0) -- (0,9) node[above]{number of pieces of $g$};
    \draw[->] (0,0) -- (9,0);
    \node at (5,-1) {number of pieces of $h$};
    \draw[red] (0,0) rectangle (3,4);
    \fill[red,opacity=0.1] (0,0) rectangle (3,4);
    \filldraw[blue] (3,4) circle (6pt);
    \filldraw[black] (5,2) circle (4pt);
    \filldraw[black] (4,2) circle (4pt);
    \end{tikzpicture}
    \caption{The blue point corresponds to a minimal decomposition.}
        \end{subfigure}
    \hspace{1em}
\begin{subfigure}[t]{.47\textwidth}
\centering
\begin{tikzpicture}[scale=0.45]
    \draw[gray,very thin] (0,0) grid (8,8);
    \draw[->] (0,0) -- (0,9) node[above]{number of pieces of $g$};
    \draw[->] (0,0) -- (9,0); \node at (5,-1) {number of pieces of $h$};
    \draw[red] (0,0) rectangle (5,2);
    \fill[red,opacity=0.1] (0,0) rectangle (5,2);
    \filldraw[black] (3,4) circle (4pt);
    \filldraw[blue] (5,2) circle (6pt);
    \filldraw[black] (4,2) circle (4pt);
    \end{tikzpicture}
    \caption{The blue point corresponds to a decomposition that is not minimal.}
\end{subfigure}
    \caption{Visualization of minimality, where a decomposition $(g,h)$ is described by the number of pieces of $g$ and $h$. A decomposition is minimal, if the rectangle spanned with $(0,0)$ does not contain another decomposition.}
    \label{fig:minimality}
    
\end{figure}

The number of pieces relates to the notion of \emph{monomial complexity} studied in \cite{tran2024minimal}, where a minimal decomposition translates to a decomposition which is minimal with respect to monomial complexity. We now give a geometric interpretation of this property in terms of $\decomp{\complex}{f}$.

\begin{theorem}\label{th:minimal-vertex}
    A minimal decomposition $(g,h)\in\decomp{\complex}{f}$ is always a vertex of $\decomp{\complex}{f}$.
\end{theorem}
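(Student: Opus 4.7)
The plan is to prove the contrapositive: if $(g,h) \in \decomp{\PP}{f}$ is not a vertex of $\decomp{\PP}{f}$, then it is dominated by another decomposition and hence not minimal. The main engine is \Cref{thm:vertex_characterization}, which already packages the polyhedral condition of being a vertex into the combinatorial condition of not admitting a non-trivial coarsening.

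First, I would apply the equivalence (\ref{thm:vertex_characterization2})$\Leftrightarrow$(\ref{thm:vertex_characterization1}) of \Cref{thm:vertex_characterization} to extract a non-trivial coarsening $(g',h')$ of $(g,h)$. Unpacking the definition, this yields $g'-h' = g-h = f$, with $\PP_{g'}$ a coarsening of $\PP_g$, $\PP_{h'}$ a coarsening of $\PP_h$, and $(\PP_{g'},\PP_{h'}) \neq (\PP_g, \PP_h)$. Since $\PP$ is a refinement of $\PP_g$, and refinement is transitive, $\PP$ is also a refinement of $\PP_{g'}$, so $g' \in \VPP$; the same argument gives $h' \in \VPP$. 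Thus $(g',h') \in \decomp{\PP}{f}$ is another legitimate decomposition of $f$.

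Next, I would translate the coarsening relation into piece counts. For any convex $\phi \in \VPP$, the number of pieces equals $\lvert \PP_\phi^n \rvert$ because $k=q$ in the convex case (as noted in \Cref{sec:prelims}). The auxiliary combinatorial claim is: whenever $\PP_{g'}$ is a coarsening of $\PP_g$, there is a well-defined surjection $\PP_g^n \to \PP_{g'}^n$ sending each maximal cell to the unique maximal cell of $\PP_{g'}$ containing it; if this surjection is a bijection, then the two complexes already agree on maximal cells and hence as complexes, so $\PP_{g'} \neq \PP_g$ forces the strict inequality $\lvert \PP_{g'}^n \rvert < \lvert \PP_g^n \rvert$. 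An identical argument applies to $h'$ and~$h$.

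Finally, because the coarsening is non-trivial, at least one of the pairs $(\PP_g,\PP_{g'})$ or $(\PP_h,\PP_{h'})$ differs, so at least one of the two strict inequalities above holds, while the other is still a (weak) inequality. This is exactly the condition that $(g',h')$ dominates $(g,h)$ in the sense of \Cref{def:mini}, contradicting minimality. The only slightly delicate step I anticipate is the auxiliary claim that a strict coarsening of complete polyhedral complexes strictly decreases the number of maximal cells; the remainder of the argument is a direct translation of \Cref{thm:vertex_characterization} into the language of minimal decompositions.
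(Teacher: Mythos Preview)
Your proposal is correct and follows the same approach as the paper's proof: both argue by contrapositive, invoke \Cref{thm:vertex_characterization} to produce a non-trivial coarsening $(g',h')$, and conclude that $(g',h')$ dominates $(g,h)$. The paper's proof is a terse two-sentence version that leaves implicit both the verification that $(g',h')\in\decomp{\PP}{f}$ and the passage from ``non-trivial coarsening'' to ``strictly fewer pieces in at least one component''; your write-up correctly fills in these details, and your identification of the surjection $\PP_g^n\to\PP_{g'}^n$ argument as the only mildly delicate point is accurate.
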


\begin{proof}
    If $(g,h)$ is not a vertex, then by \Cref{thm:vertex_characterization}, there is a non-trivial coarsening $(g',h')$ of $(g,h)$. Thus, $(g,h)$ is dominated by $(g',h')$ and therefore not minimal.
\end{proof}

This theorem implies a simple finite procedure to find a minimal decomposition: enumerate all the vertices of $\decomp{\complex}{f}$ and choose one satisfying \Cref{def:mini}. It also suggests the following important special case.

\begin{proposition}\label{prop:minimal-unique-vertex}
    If $\decomp{\complex}{f}$, or equivalently $\pi(\decomp{\complex}{f})$, has a unique vertex, then this vertex corresponds to the unique minimal decomposition within $\decomp{\complex}{f}$.
\end{proposition}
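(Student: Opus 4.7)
The plan is to derive this as a short consequence of \Cref{th:minimal-vertex} together with a standard finiteness argument guaranteeing existence of a minimal decomposition.

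First, I would invoke \Cref{th:minimal-vertex}, which asserts that every minimal decomposition is a vertex of $\decomp{\PP}{f}$. Under the hypothesis that $\decomp{\PP}{f}$ has a unique vertex $v$, this forces every minimal decomposition to coincide with $v$; in particular, there is at most one minimal decomposition. The equivalent formulation via $\pi(\decomp{\PP}{f})$ holds because $\pi$ is an affine isomorphism of polyhedra (noted just before \Cref{th:shifted-cones}), hence bijects vertices to vertices.

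It then remains to show that \emph{at least one} minimal decomposition exists; combined with the previous step, this will pin down the unique vertex $v$ as the unique minimal decomposition. For this, I would introduce the complexity map $\Phi\colon\decomp{\PP}{f}\to\N\times\N$ sending $(g,h)$ to the pair $(q(g),q(h))$ of their piece counts. The image $\Phi(\decomp{\PP}{f})$ is a non-empty subset of $\N\times\N$, non-emptiness following from the existence of $v\in\decomp{\PP}{f}$. Since the componentwise order on $\N\times\N$ is a well-partial-order, $\Phi(\decomp{\PP}{f})$ contains a minimal element $(a,b)$. Any preimage $(g,h)\in\Phi^{-1}(a,b)$ is then a minimal decomposition in the sense of \Cref{def:mini}: a hypothetical dominator $(g',h')\in\decomp{\PP}{f}$ would satisfy $\Phi(g',h')\leq(a,b)$ componentwise with strict inequality in some coordinate, contradicting minimality of $(a,b)$ in the image.

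I do not anticipate a serious obstacle: the conceptual work has already been carried out in \Cref{th:minimal-vertex}, and the only additional ingredient is the elementary observation that the piece-count image in $\N\times\N$ admits minimal elements. The mild subtlety worth making explicit in the write-up is precisely this existence step, ensuring that minimality is attained rather than merely approached.
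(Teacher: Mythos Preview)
Your proposal is correct and follows essentially the same approach as the paper: establish existence of a minimal decomposition from nonemptiness, then apply \Cref{th:minimal-vertex} to force it to be the unique vertex. The paper's proof is terser---it simply asserts that nonemptiness implies existence of a minimal decomposition---whereas you spell out the underlying well-partial-order argument on $\N\times\N$, which is a reasonable elaboration but not a different route.
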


\begin{proof}
    As $\decomp{\complex}{f}$ is nonempty, there must exist a minimal decomposition. By \Cref{th:minimal-vertex}, every minimal decomposition must be a vertex. As there is only one vertex, it must coincide with the unique minimal decomposition.
\end{proof}

We now demonstrate that this case is not only convenient, but also it indeed arises for important classes of functions. To this end, recall that $\pi(\decomp{\complex}{f}) = \VCP \cap (\VCP + f)$, where $\VCP$ is a convex, pointed polyhedral cone. In the following lemma, we give some sufficient conditions for such intersections of shifted cones to yield a polyhedron with a unique vertex. 
\begin{lemma}\label{lem:shifted-cones}
    Let $C \subset \R^d$ be a convex, pointed polyhedral cone, and $t \in \R^d$ 
    If $C$ is simplicial then $C \cap (C+t)$ is a translate of $C$.
    If $C$ is not simplicial, then $C \cap (C+t)$ is a translate of $C$ if $t \in C$.
\end{lemma}
\begin{proof}
    If $C$ is a simplicial full-dimensional cone, then it is the image of the nonnegative orthant under an affine isomorphism. Thus, it suffices to show that $C \cap (C + t)$ is a shifted cone for $C = \R_{\geq 0}^d$. Let $\hat{t} \in \R^d$ such that $\hat{t}_i = \max(t_i, 0)$. Then
    \[
        C \cap (C + t)
        =
        \{x \mid x_i \geq 0 \text{ and } x_i \geq t_i \}
        = 
        \{x \mid x_i \geq \hat{t}_i \}
        =
        C + \hat{t}.
    \]
    On the other hand, if $C$ is an arbitrary polyhedral cone and $t \in C$, then $C \cap (C + t) = C+t$, and hence a shifted cone.
\end{proof}
\begin{example}
    The converse of the second statement from \Cref{lem:shifted-cones} does not hold, that is, $t \not\in C$ does not imply that $C \cap (C+t)$ is not a shifted cone. Indeed, let $
    C = \cone\left( 
    \sma 1 \\ 0 \\ 0 \strix ,
    \sma 1 \\ 1 \\ 0 \strix ,
    \sma 1 \\ 0 \\ 1 \strix ,
    \sma 1 \\ 1 \\ 1 \strix 
    \right) 
    \text{ and }
    t = \sma 0 \\ 1 \\ 1 \strix.
    $
    Then $C \cap (C+t) = C + \sma 1 \\ 1 \\ 1\strix$ is a shifted cone. On the other hand, the choice $t = \sma 0 \\ 1 \\ 2 \strix$ yields the unbounded polyhedron
    $
        C \cap (C + t) = \conv\left( 
        \sma 2 \\ 2 \\ 2 \strix,
        \sma 2 \\ 1 \\ 2 \strix
        \right) + C,
    $
    which has two vertices and one line segment as bounded faces.
\end{example}
Moreover, the support of a decomposition can serve as a certificate to verify if a decomposition is a unique vertex, and hence minimal. 
For $f \in \VCmod$, let $\supp_\complex^+(f) \coloneqq \{\sigma \in \complex \mid w_f(\sigma) > 0\}$ and $\supp_\complex^-(f) \coloneqq \{\sigma \in \complex \mid w_f(\sigma) < 0\}$.
\begin{proposition}
\label{prop:unique_solution}
    If for $f \in \VCmod$, there are $g,h \in \VCP$ such that $f=g-h$ and $\supp_\complex^+(f)=\supp_\complex(g)$ as well as $\supp_\complex^-(f)=\supp_\complex(h)$, then $(g,h)$ is the unique vertex of $\decomp{\mathcal{Q}}{f}$ for every regular complete complex $\mathcal{Q}$ compatible with $f$. In this case, $g$ and $h$ have at most as many pieces as $f$.
\end{proposition}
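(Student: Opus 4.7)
The plan is to work with the edge parameterization of \Cref{lem:edge_rep} and the convexity criterion of \Cref{lem:nonnegative}: a function is convex if and only if its weight vector is nonnegative. The hypothesis together with $w_g - w_h = w_f$ and $w_g, w_h \geq 0$ forces, on every $\sigma \in \PP^{n-1}$,
\[
w_g(\sigma) \;=\; \max\{w_f(\sigma),\,0\}, \qquad w_h(\sigma) \;=\; \max\{-w_f(\sigma),\,0\},
\]
so $(g,h)$ is uniquely determined as the positive/negative splitting of the weight vector $w_f$ on $\PP$.

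Next I would verify that $(g,h) \in \decomp{\mathcal{Q}}{f}$ for every regular complete complex $\mathcal{Q}$ compatible with $f$. The breakpoint locus of $g$ equals $\bigcup \supp_\PP(g) = \bigcup \supp_\PP^+(f)$, which is contained in the breakpoint locus of $f$ and hence in the $(n{-}1)$-skeleton of $\mathcal{Q}$. Therefore $g$ is linear on each $n$-cell of $\mathcal{Q}$; being also convex, $g$ lies in the convex cone with respect to $\mathcal{Q}$, and likewise for $h$. Applying this observation to a $\mathcal{Q}$ realizing the minimum number of pieces $q_f$ of $f$ immediately yields the piece count bound $q_g, q_h \leq q_f$ claimed in the statement.

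For uniqueness I would take any $(g', h') \in \decomp{\mathcal{Q}}{f}$ and pass to a common refinement $\mathcal{R}$ of $\PP$ and $\mathcal{Q}$ compatible with $f$, $g$, $h$, $g'$, and $h'$. On each $\tau \in \mathcal{R}^{n-1}$, one computes $w_g^\mathcal{R}(\tau) = \max\{w_f^\mathcal{R}(\tau), 0\}$ by pulling weights back from $\PP$, or observing they vanish on facets of $\mathcal{R}$ lying in the interior of an $n$-cell of $\PP$ (where $g$ is linear). Meanwhile, $w_{g'}^\mathcal{R}, w_{h'}^\mathcal{R} \geq 0$ and $w_{g'}^\mathcal{R} - w_{h'}^\mathcal{R} = w_f^\mathcal{R}$ give $w_{g'}^\mathcal{R}(\tau) \geq \max\{w_f^\mathcal{R}(\tau), 0\} = w_g^\mathcal{R}(\tau)$ pointwise. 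Hence $\phi := g' - g$ has nonnegative weight, so $\phi$ is convex by \Cref{lem:nonnegative}, and $h' = h + \phi$. This identifies
\[
\decomp{\mathcal{Q}}{f} \;=\; (g, h) + \{(\phi, \phi) \mid \phi \text{ convex, compatible with } \mathcal{Q}\},
\]
a translate of the diagonal copy of the cone of convex CPWL functions compatible with $\mathcal{Q}$. This cone is pointed, since an element of $\VPP \cap (-\VPP)$ is both convex and concave, hence affine, hence zero in $\VPmod$. A translated pointed cone has a unique vertex, namely its apex, giving $(g,h)$ as the unique vertex of $\decomp{\mathcal{Q}}{f}$.

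The main technical obstacle is the careful bookkeeping of weight functions when passing between $\PP$, $\mathcal{Q}$, and their common refinement $\mathcal{R}$: one must verify that the strong support identities posited on $\PP$ still yield the pointwise inequality $w_{g'}^\mathcal{R} \geq w_g^\mathcal{R}$ on \emph{all} of $\mathcal{R}^{n-1}$, including facets newly introduced in the refinement where $w_g^\mathcal{R}$ vanishes. Once this is in place, the proof collapses to the clean geometric statement that $\decomp{\mathcal{Q}}{f}$ is a translated pointed cone with apex $(g,h)$.
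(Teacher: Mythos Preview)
Your proof is correct and reaches the same conclusion, but the route differs from the paper's. The paper never passes to a common refinement and never computes the pointwise inequality $w_{g'}\ge w_g$. Instead it argues purely at the level of supports: from $w_{g'}-w_f=w_{h'}\ge 0$ in $\mathcal{Q}$ one gets $\supp_\mathcal{Q}^+(f)\subseteq\supp_\mathcal{Q}(g')$, and since the hypothesis transfers to $\supp_\mathcal{Q}(g)=\supp_\mathcal{Q}^+(f)$, one obtains $\supp_\mathcal{Q}(g)\subseteq\supp_\mathcal{Q}(g')$. Then \Cref{lem:unique_coarsest} says $(g,h)$ is a coarsening of every $(g',h')$, and \Cref{thm:vertex_characterization} converts this into the vertex statement.

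Your argument is more elementary in that it bypasses \Cref{lem:unique_coarsest} and \Cref{thm:vertex_characterization} entirely: you establish the stronger pointwise bound $w_{g'}^{\mathcal R}\ge w_g^{\mathcal R}$ and deduce directly that $\decomp{\mathcal{Q}}{f}$ equals $(g,h)$ plus the diagonal copy of the convex cone, hence is a translated pointed cone. The price is the common-refinement bookkeeping you flag at the end (which does go through, since every $\tau\in\mathcal R^{n-1}$ either sits inside a facet of $\PP$ or in the interior of an $n$-cell of $\PP$, because $\mathcal R$ refines $\PP$ and $\dim\tau=n-1$). The paper's approach is cleaner in that it stays entirely within $\mathcal{Q}$, but it leans on the structural machinery developed earlier; your approach is self-contained and yields the explicit description $\decomp{\mathcal{Q}}{f}=(g,h)+\{(\phi,\phi):\phi\in\overline{\mathcal V}_{\mathcal Q}^+\}$ as a byproduct.
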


\begin{proof}
Let $\mathcal{Q}$ be any regular complete complex that is compatible with $f$. 
Then, $g$ and $h$ are as well compatible with $\mathcal{Q}$, since $\supp_\complex(g),\supp_\complex(h) \subseteq \supp_\complex(f)$ implies that also $\supp_\mathcal{Q}(g),\supp_\mathcal{Q}(h) \subseteq \supp_\mathcal{Q}(f)$.  Let $(g',h') \in \decomp{\mathcal{Q}}{f}$.
Then it holds that $\supp^+_\mathcal{Q}(f) \subseteq \supp_\mathcal{Q}(g')$ since $w_{g'}-w_f=w_{h'} \geq 0$. Hence, $\supp_\mathcal{Q}(g) \subseteq \supp_\mathcal{Q}(g')$ and \Cref{lem:unique_coarsest} implies that $g$ is a coarsening of $g'$ and analogously it follows that $h$ is a coarsening of $h'$. Therefore, $(g,h)$ is a coarsening of every decomposition and thus by \Cref{thm:vertex_characterization} the only vertex of $\decomp{\mathcal{Q}}{f}$. Clearly, $g$ and $h$ cannot have more pieces than $f$.
\end{proof}
While \Cref{prop:unique_solution} sounds technical, it is powerful as it allows us to prove that important functions satisfy the condition of \Cref{prop:minimal-unique-vertex}.

\begin{definition}
	A \emph{hyperplane function} with $k$ hyperplanes is a function $f \colon \R^d \to \R$ given by $f(x) = \sum_{i \in [k]} \lambda_i \cdot \max\{\inner{x,a_i}+b_i,\inner{x,c_i}+d_i\}$ for any $a_i,c_i \in \R^d, b_i,d_i, \lambda_i \in \R,i \in [k]$.  
\end{definition}
Hyperplane functions are precisely the functions that are computable by a ReLU neural network with one hidden layer and appear in this context as 2-term max functions (\citet{hertrich2021towards}). They also coincide with functions computable with the hinging hyperplane model (\citet{breiman93,wang2005generalization}). 
\begin{example}[Minimal decomposition for hyperplane functions]
\label{ex:virtual_zonotopes}
    Let $f \colon \R^d \to \R$ be a hyperplane function given as $f(x) = \sum_{i \in [k]} \lambda_i \cdot \max\{\inner{x,a_i}+b_i,\inner{x,c_i}+d_i\}$. We can assume without loss of generality that the hyperplanes \[H_i= \{x \in \R^d \mid \inner{x,a_i}+b_i=\inner{x,c_i}+d_i\}\] are pairwise distinct, because otherwise we can simply adjust $\lambda_i$. The polyhedral complex $\complex$ induced by the hyperplane arrangement $\{H_i\}_{i \in [k]}$ is compatible with $f$. The convex functions $g,h$ given by  \[g(x) = \sum_{\lambda_i \geq 0} \lambda_i \cdot \max\{\inner{x,a_i}+b_i,\inner{x,c_i}+d_i\} \] and \[h(x) = \sum_{ \lambda_i < 0} -\lambda_i \cdot \max\{\inner{x,a_i}+b_i,\inner{x,c_i}+d_i\}\] are the unique minimal decomposition of $f$ since \[\supp_\complex(g)=\supp^+_\complex(f)= \{\sigma \in \complex^{d-1} \mid \sigma \subseteq \bigcup_{\lambda_i \geq 0} H_i\}\] and \[\supp_\complex(h)=\supp^-_\complex(f)= \{\sigma \in \complex^{d-1} \mid \sigma \subseteq \bigcup_{\lambda_i < 0} H_i\}.\]
    See \Cref{fig:hyperplane_minimal_decomposition} for an illustration of the minimal decomposition of a hyperplane function.
\end{example}

\begin{figure}
    \centering
        \centering
       \begin{tikzpicture}[scale=0.6]
            \node at (3,0) {$=$};
            \node at (9,0) {$-$};
           \draw[convex,very thick] (-2,-1) -- (2,1);
           \draw[convex,very thick] (-2,1) -- (2,-1);
           \draw[concave,very thick] (-2,0.5) -- (2,-0.5);
           \draw[concave,very thick] (0.5,2) -- (-0.5,-2);
           \begin{scope}[shift={(6,0)}]
               \draw[convex,very thick] (-2,-1) -- (2,1);
           \draw[convex,very thick] (-2,1) -- (2,-1); 
           \end{scope}
            \begin{scope}[shift={(12,0)}]
              \draw[concave,very thick] (-2,0.5) -- (2,-0.5);
           \draw[concave,very thick] (0.5,2) -- (-0.5,-2);
           \end{scope}
       \end{tikzpicture}
       \caption{Illustration of the unique minimal decomposition of a hyperplane function.}
       \label{fig:hyperplane_minimal_decomposition}
\end{figure}

\begin{definition}
	The \emph{$k$-th order statistic} is the function $f\colon\R^d\to\R$ that returns the $k$-th largest entry of an input vector $x\in\R^d$. For $k=\lfloor \frac{d}{2}\rfloor$, this coincides with the median.
\end{definition}

\begin{example}[Minimal decomposition of $k$-th order statistic]
\label{ex:median_unique_solution}
We construct a polyhedral complex that is compatible with the $k$-th order statistic. For $U\subseteq[d]$ with $|U|=k-1$ and $i \in [d] \setminus U$, let \[P_{i,U}= \{x \in \R^d \mid x_j \leq x_i \leq x_\ell \; \text{ for all } \ell\in U, j \in [d] \setminus U \}.\] All such polyhedra and their faces form a polyhedral complex $\complex$ that is a coarsening of the braid arrangement (see \Cref{def:braid_arrangement}) and compatible with the function $f\colon \R^d \to \R$ given by $f(x)=x_i$ for $x \in P_{i,U}$. It is not hard to see that $f=g-h$ where $g,h \in \VCP$ are convex functions given by\[g(x) \coloneqq \max\limits_{\substack{I\subseteq [d]\\ |I|= k}} \left(\sum_{i \in I}x_i\right) \text{ and } h(x) \coloneqq \max\limits_{\substack{I\subseteq [d]\\ |I|= k-1}} \left(\sum_{i \in I}x_i\right).\] Moreover, for $i,j \notin U$, let \[\sigma_{i,j,U} \coloneqq \{x \in \R^d \mid x_\ell \leq x_j = x_i \leq x_m \text{ for all } m\in U, \ell \in [d] \setminus U \}.\] Then \[\supp_\complex(g)=\supp_\complex^+(f)=\{\sigma_{i,j,U} \mid U\subseteq[d], |U|=k-2\}\] and \[\supp_\complex(h)=\supp_\complex^-(f)=\{\sigma_{i,j,U} \mid U\subseteq[d], |U|=k-1\}.\] Thus, \Cref{prop:unique_solution} implies that $(g,h)$ is the unique vertex of the decomposition polyhedra corresponding to every regular polyhedral complex compatible with $f$. See \Cref{fig:media_minimal_decomposition} for an illustration of the unique minimal decomposition of the median function.
\end{example}

\begin{figure}
    \centering
    \begin{tikzpicture}[scale=0.6]
        \draw[convex,very thick] (-6,0) -- (-6,2);
        \draw[concave, very thick] (-6,0) -- (-6,-2) ;
        \draw[convex,very thick] (-6,0) -- (-4,0);
        \draw[concave, very thick] (-6,0) -- (-8,0) ;
        \draw[convex,very thick] (-6,0) -- (-8,-2) ;
        \draw[concave, very thick] (-6,0) -- (-4,2) ;
        \node at (-3,0) {$=$};
        \node at (3,0) {$-$};
        
        \draw[convex,very thick] (0,0) -- (0,2);
        \draw[concave, very thick] (6,0) -- (6,-2) ;
        \draw[convex,very thick] (0,0) -- (2,0);
        \draw[concave, very thick] (6,0) -- (4,0) ;
        \draw[convex,very thick] (0,0) -- (-2,-2) ;
        \draw[concave, very thick] (6,0) -- (8,2);

        \node at (0.75-6,1.5) {$x_1$};
        \node at (1.5-6,0.75) {$x_2$};
        \node at (-1-6,1) {$0$};
        \node at (1-6,-1) {$0$};
        \node at (-0.75-6,-1.5) {$x_1$};
        \node at (-1.5-6,-0.75) {$x_2$};

        \node at (1.5,1) {$x_1+x_2$};
        \node at (0.5,-1) {$x_1$};
        \node at (-1,0.5) {$x_2$};

        \node at (5,-1) {$0$};
        \node at (7,0) {$x_1$};
        \node at (6,1) {$x_2$};
    \end{tikzpicture}

    \caption{Illustration of the unique minimal decomposition of the median function.}
    \label{fig:media_minimal_decomposition}
\end{figure}

\Cref{th:decomp-reduced} characterizes the reduced decompositions as bounded faces and \Cref{prop:unique_solution} provides a condition that can identify a given decomposition as the minimal one.
The natural follow-up question is how to find these decompositions. In the following, we show that this can be done via linear programming over the decomposition polyhedron.
\begin{theorem}\label{th:minimizer-unique-vertex}
A decomposition in $\pi(\decomp{\complex}{f})
        =  \VCP \cap (f+ \VCP)$ is reduced if and only if it is the minimizer in $\pi(\decomp{\complex}{f})$ of a direction
        contained in the interior of the dual cone of $\VCP$.
        In particular, if $\pi(\decomp{\complex}{f})$ has a single vertex, then the unique minimizer is the unique reduced and minimal decomposition. 
    Under the isomorphism to $\WC$, the direction $u$ can be chosen as $u(\sigma)=1$ for all $\sigma \in \complex^{d-1}$.
\end{theorem}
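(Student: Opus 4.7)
The plan is to combine \Cref{th:decomp-reduced}, which identifies reduced decompositions with points on bounded faces of $\decomp{\PP}{f}$, with a classical polyhedral fact characterizing bounded faces via minimization of linear functionals strictly positive on the recession cone.

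First I would determine the recession cone of $\pi(\decomp{\PP}{f}) = \VPP \cap (f + \VPP)$. Since translation preserves recession cones and $\VPP$ is itself a cone,
\[
\operatorname{rec}\bigl(\VPP \cap (f + \VPP)\bigr) = \operatorname{rec}(\VPP) \cap \operatorname{rec}(f+\VPP) = \VPP \cap \VPP = \VPP.
\]
By \Cref{lem:VP_cone} and the fact, noted just before the definition of the decomposition polyhedron, that $\VPmod = \Span(\VPP)$, the cone $\VPP$ is pointed and full-dimensional in $\VPmod$; hence its dual $\VPP^\vee$ is also full-dimensional, and $\operatorname{int}(\VPP^\vee)$ consists precisely of those linear functionals $\ell$ satisfying $\ell(v) > 0$ for every $v \in \VPP \setminus \{0\}$.

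Next I would invoke the following standard polyhedral fact: for a polyhedron $P$ with recession cone $C$, a point $p \in P$ lies on a bounded face of $P$ if and only if $p$ is a minimizer over $P$ of some $\ell \in \operatorname{int}(C^\vee)$. For the ``$\Rightarrow$'' direction, any $\ell$ in the relative interior of the normal cone to a bounded face through $p$ must satisfy $\ell(c) > 0$ for all $c \in C \setminus \{0\}$, since otherwise the entire ray $p + \R_{\geq 0} c \subseteq P$ would lie in the minimizer face, contradicting boundedness. For the ``$\Leftarrow$'' direction, if $\ell \in \operatorname{int}(C^\vee)$, the minimizer face cannot contain any recession ray and is therefore bounded. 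Combining this dichotomy with \Cref{th:decomp-reduced} directly gives the main equivalence of the theorem.

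For the ``in particular'' statement, a polyhedron with a unique vertex $v^*$ equals $v^* + \operatorname{rec}(P)$, whose only bounded face is $\{v^*\}$; hence every $\ell \in \operatorname{int}(\VPP^\vee)$ attains its minimum uniquely at $v^*$, which by \Cref{prop:minimal-unique-vertex} is the unique minimal decomposition and by the equivalence just shown is also the unique reduced one. For the explicit choice $u \equiv 1$, the isomorphism of \Cref{lem:edge_rep} sends $\VPP$ to $\WPP = \{w \in \WP \mid w \geq 0\}$, and the functional $w \mapsto \sum_\sigma w(\sigma)$ is strictly positive on $\WPP \setminus \{0\}$ because any nonzero nonnegative vector has a strictly positive coordinate sum; hence $u \in \operatorname{int}(\WPP^\vee)$. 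The main obstacle is a careful statement of the polyhedral fact above, ensuring that ``interior'' is interpreted in the ambient quotient space $\VPmod$ in which $\VPP$ is full-dimensional; once this is set up, everything reduces to standard linear programming duality combined with the previously established structural results.
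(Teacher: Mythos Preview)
Your proposal is correct and follows essentially the same route as the paper: identify the recession cone of $\pi(\decomp{\PP}{f})$ as $\VPP$, invoke the standard polyhedral characterization that a face is bounded precisely when it is the minimizer set of a functional in $\operatorname{int}(C^\vee)$ (the paper isolates this as \Cref{prop:minimizer-dual-cone}), and then combine with \Cref{th:decomp-reduced} and \Cref{prop:minimal-unique-vertex}. Your justification of $u\equiv 1\in\operatorname{int}(\WPP^\vee)$ and your computation of the recession cone are slightly more explicit than the paper's, but the argument is the same.
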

Before proving this statement, we give a description of the dual cone $(\VCP)^\vee$.
Recall that $\VCP \cong  \bigcap_{\sigma \in \complex^{d-1}}\{w \in \WC \mid w(\sigma) \geq 0\}$, i.e., the intersection of the nonnegative orthant $\{w \colon \complex^{d-1} \to \R \mid w(\sigma) \geq 0\}$ with the linear space $\WC$. 
By duality of intersections and sums, it follows that 
$(\VCP)^\vee$ is isomorphic to the Minkowski sum of the nonnegative orthant with $\WC^\perp$. In particular, any weight function $w$ with positive weights $w(\sigma) > 0$ lies in the interior. 
\Cref{th:minimizer-unique-vertex} follows from the following more general fact about faces of polyhedra.

\begin{lemma}\label{prop:minimizer-dual-cone}
    Let $C$ be a convex, pointed polyhedral cone and $P$ a polyhedron with recession cone $C$.
    Then $u \in \operatorname{int}(C^\vee)$ is a direction in the interior of the dual cone of $C$ if and only if the face $P^u$ of $P$ which is minimized by $u$ is a bounded face.
\end{lemma}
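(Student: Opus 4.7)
The plan is to reduce the equivalence to two standard facts about polyhedra and pointed cones, and then chain them together. First I would recall that for a polyhedron $P = Q + C$ with $Q$ bounded and recession cone $C$, the linear functional $\langle \cdot, u\rangle$ attains its minimum on $P$ if and only if $u \in C^\vee$, and in that case the face of minimizers
\[
    P^u = \{x \in P : \langle x,u\rangle \leq \langle y,u\rangle \text{ for all } y \in P\}
\]
is itself a polyhedron with recession cone exactly $C \cap u^\perp$. This is obtained by writing an arbitrary element of $P$ as $q + c$ with $q \in Q$ and $c \in C$ and observing that the minimum is attained precisely at pairs where $q$ minimizes $\langle \cdot,u\rangle$ on $Q$ and $\langle c,u\rangle = 0$. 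This is the minimizer-side analogue of the Minkowski-sum decomposition $P^u = Q^u + C^u$ used in the proof of \Cref{th:bounded-faces}.

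Second, I would invoke the following classical description of the interior of the dual cone: for a pointed polyhedral cone $C$,
\[
    \operatorname{int}(C^\vee) = \{u : \langle u,c\rangle > 0 \text{ for all } c \in C \setminus \{0\}\},
\]
so that $u \in \operatorname{int}(C^\vee)$ is equivalent to the conjunction $u \in C^\vee$ together with $C \cap u^\perp = \{0\}$.

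Combining these two ingredients, both directions of the lemma become immediate. If $u \in \operatorname{int}(C^\vee)$, then $u \in C^\vee$ guarantees that $P^u$ is nonempty, and the recession cone $C \cap u^\perp = \{0\}$ forces $P^u$ to be bounded. Conversely, if $P^u$ is a nonempty bounded face, then the minimum is attained, so $u \in C^\vee$, and the recession cone of $P^u$ must be trivial, so $C \cap u^\perp = \{0\}$, placing $u$ in the interior of $C^\vee$. The only subtle point I anticipate is notational bookkeeping: one must interpret "face $P^u$ minimized by $u$" as a \emph{nonempty} face (otherwise $u \notin C^\vee$ would yield a vacuously bounded empty face and break the equivalence), and flip signs relative to the maximizer convention in \Cref{th:bounded-faces}. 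Once these conventions are fixed, the proof is short and entirely classical; no new technical difficulty arises beyond the two facts quoted above.
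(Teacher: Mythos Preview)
Your proposal is correct and follows essentially the same approach as the paper: both write $P = Q + C$ with $Q$ bounded and reduce the question to whether the cone's contribution to $P^u$ is trivial. The paper phrases this as $P^u = Q^u + C^u$ with $C^u$ bounded iff $u \in \operatorname{int}(C^\vee)$, while you phrase it as the recession cone of $P^u$ being $C \cap u^\perp$; these are the same observation since $C^u = C \cap u^\perp$ whenever $u \in C^\vee$. Your treatment is in fact slightly more careful, since you explicitly handle the nonemptiness issue (requiring $u \in C^\vee$ for the minimum to be attained at all), which the paper's proof leaves implicit.
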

\begin{proof}
    Let $P = C + Q$, where $Q$ is a bounded polyhedron. Then for any direction $u$ holds $P^u = C^u + Q^u$. As $C$ is a pointed cone, we have that $C^u$ is bounded if and only if $u \in \operatorname{int}(C^\vee)$. Since $Q^u$ is bounded for any direction, it follows that $P^u$ is bounded if and only if $u \in \operatorname{int}(C^\vee)$.
\end{proof}

\begin{proof}[Proof of \Cref{th:minimizer-unique-vertex}]
    $\decomp{\complex}{f} = \VCP \cap (\VCP+f)$ is a polyhedron with recession cone $\VCP$. Applying \Cref{prop:minimizer-dual-cone} yields that for any  $u \in (\operatorname{int}((\VCP)^\vee)$, every minimizer in $\pi(\decomp{\complex}{f})$ lies in a bounded face, which, by \Cref{th:decomp-reduced}, are precisely the reduced decompositions. Moreover, if $\pi(\decomp{\complex}{f})$ contains a unique vertex then  by \Cref{prop:minimal-unique-vertex} this coincides with the unique minimal decomposition.
\end{proof}

\section{Translation to Virtual Polytopes}
\label{sec:virtual}

The framework of decomposition polyhedra has a natural translation to the setting of virtual polytopes. In this section, we translate our main results to this context, providing structural insights into the problem of finding ``minimal'' representatives for a given virtual polytope.

As discussed in \Cref{sec:different_perspectives}, the vector space $\cpwl^0_d$ of positively homogeneous CPWL functions is isomorphic to the vector space $\vpolytopes_d$ of virtual polytopes. Under this isomorphism, a convex positively homogeneous CPWL function corresponds to its Newton polytope. Consequently, decomposing a function $f \in \cpwl^0_d$ into a difference of convex functions, $f=g-h$, is equivalent to finding a representative for the virtual polytope $\Newt(f)$ as a Minkowski difference of two conventional polytopes, $\Newt(f) = \Newt(g) - \Newt(h)$.

This equivalence allows us to rephrase \Cref{problem} in a purely geometric language, see also \citet{tran2024minimal}. A common measure for the complexity of a polytope is its number of vertices.  The geometric problem is thus:

\begin{problem}\label{prob:virtual_polytope_rep}
	Given a virtual polytope $V \in \vpolytopes_d$, find a representative $V = P - Q$ where $P, Q \in \polytopes_d$ such that the number of vertices of $P$ and $Q$ are minimized.
\end{problem}

Our theory provides a new perspective for this problem. Let $R$ be a polytope and $\complex$ its normal fan. We restrict our search to representatives $P-Q$ where $P$ and $Q$ are in the deformation cone of $R$, or equivalently, the normal fans of $P$ and $Q$ are coarsenings of $\complex$. This is equivalent to requiring that the corresponding CPWL functions $f_P$ and $f_Q$ are compatible with $\complex$. For a virtual polytope $V$ whose corresponding CPWL function $f_V = \Newt^{-1}(V)$ is compatible with $\complex$, we define its \emph{representative polyhedron} as
\[
    \mathcal{R}_{\complex}(V) \coloneqq \{(P,Q) \in \polytopes_d \times \polytopes_d \mid V = P-Q, P,Q \in \defcone(R) \}.
\]
The linear isomorphism $\Newt$ maps the decomposition polyhedron $\decomp{\complex}{f_V}$ bijectively to the representative polyhedron $\mathcal{R}_{\complex}(V)$. This immediately allows us to translate our main theorems.

\begin{corollary}
    The set $\mathcal{R}_{\complex}(V)$ is a polyhedron that arises as the intersection of two shifted copies of the deformation cone $\defcone(R)$. 
\end{corollary}
\begin{proof}
      This is a direct translation of \Cref{prop:shifted-cones} via the vector space isomorphism $\Newt$.
\end{proof}
\begin{definition}\label{def:mini-virtual}
   A representative $(P,Q)\in\mathcal{R}_\complex(V)$ is called \emph{minimal},
    if it is not \emph{dominated} by any other representation, that is, if there is no other representation $(P',Q')\in\mathcal{R}_\complex(V)$ where $P'$ has at most as many vertices as $P$, $Q'$ has at most as many vertices as $Q$, and one of the two has strictly fewer vertices.
\end{definition}
\begin{corollary}
    A minimal representative $(P,Q) \in \mathcal{R}_{\complex}(V)$ must be a vertex of $\mathcal{R}_{\complex}(V)$.
\end{corollary}
\begin{proof}
Follows immediately from   \Cref{th:minimal-vertex} and the fact that the number of pieces of a convex, positively homogeneous CPWL function corresponds to the number of vertices of its Newton polytope. Thus, minimality as defined in \Cref{def:mini} corresponds precisely to minimizing the number of vertices of the representative polytopes.
\end{proof}
Similarly, we can translate the notion of a reduced decomposition. A representative $(P,Q)$ is \emph{irreducible} if there is no non-trivial polytope $A$ (i.e., $A$ is not just a point) such that $P' = P-A$ and $Q' = Q-A$ are both still valid polytopes. If such an $A$ exists, $(P', Q')$ would be a geometrically simpler representative of the same virtual polytope $V$.

\begin{corollary}
    A representative $(P,Q) \in \mathcal{R}_{\complex}(V)$ is irreducible if and only if it is contained in a bounded face of the representative polyhedron $\mathcal{R}_{\complex}(V)$.
\end{corollary}
\begin{proof}
    This follows directly from \Cref{th:decomp-reduced}, as subtracting a convex function $\phi$ from both components of a decomposition $(g,h)$ corresponds to subtracting the polytope $\Newt(\phi)$ from both polytopes in the representative $(P,Q)$.
\end{proof}

These results imply that the search for minimal representatives of a virtual polytope $V$ that are in the deformation cone of a summand polytope $R$ (with normal fan $\complex$) can be performed by enumerating the vertices of its representative polyhedron $\mathcal{R}_\complex(V)$. If this polyhedron has a unique vertex, that vertex corresponds to the unique minimal representative.

\section{Analysis of Existing Decompositions}\label{sec:existing}

Constructions of decompositions of CPWL functions as a difference of two convex functions have appeared in many contexts. In this section, we relate some of these existing constructions to our framework. Moreover, we provide a counterexample to a construction which was proposed by \citet{tran2024minimal} to obtain a minimal decomposition in general dimensions.

\subsection{Hyperplane Extension and Local Maxima Decomposition}\label{sec:extension-and-lattice}
The literature contains a variety of different constructions to decompose a CPWL function.
It is worth noting, however, that these constructions usually follow one of two main themes. The first theme is to construct $(g,h)$ in a way such that they are compatible with the complex $\complex$ that arises by extending the codimension-$1$ faces of $\complex_f$ to hyperplanes, see e.g. \citet{Zalgaller2000} and \citet{schlüter2021novel}. The second theme is to exploit the properties of the \emph{lattice representation} of a CPWL function \citep{1333237}. 

Both of these themes were already illustrated by \citet{KS87}, and we briefly describe their constructions here.

\begin{construction}[Hyperplane extension]
\label{constr:hyperplane_extension}
For all convex breakpoints, the local convex functions are extended to global convex functions with breakpoints supported on a single hyperplane, and the function $g$ is defined as the sum of all these functions.
To analyze it in our framework, let $\complex$ be any polyhedral complex that is compatible with $f$ and let $w_f \in \WC$ be the weight function corresponding to $f$. For $\sigma \in \complex^{d-1}$, let $H_\sigma$ be the hyperplane spanned by $\sigma$ and $\mathcal{A}^+_f=\{H_\sigma \mid w_f(\sigma) > 0\}$ be the hyperplane arrangement consisting of the hyperplanes supporting the breakpoints where $f$ is convex. Let $\mathcal{H}^+_f$ be the common refinement of the polyhedral complex induced by $\mathcal{A}^+_f$ and $\complex$. The weight function $w_g \colon \complex^{d-1} \to \R$ given by \[w_g(\sigma) \coloneqq \sum\limits_{\substack{\sigma \subseteq H_{\sigma'},\\w_f(\sigma')>0}}w_f(\sigma')\] is in $\mathcal{W}_{\mathcal{H}^+_f}$ and nonnegative and hence the corresponding function $g \in \mathcal{V}_{\mathcal{H}^+_f}$ is convex. It follows that $h \coloneqq g-f$ is convex as well, yielding the desired decomposition. 
\end{construction}

\begin{construction}[Local Maxima Decomposition]
\label{constr:lattice_representation}
Let $\{P_1,\ldots,P_m\}=\complex^d$ and $f_i$ be the unique linear extension of $f_{|P_i}$. Moreover, let $M_i \coloneqq \{j \in [m] \mid f_i(x) \geq f_j(x)$ for all $x \in P_i\}$
 and $g_i \coloneqq \max_{j \in M_i}f_j$. Then \[f = \min_{i \in [m]} \max_{j \in M_i} f_i= \min_{i \in [m]}g_i.\]  It follows that $g \coloneqq \sum_{i \in [m]}g_i$ is a convex function.  Furthermore, let $h_i \coloneqq g-g_i$, then $h \coloneqq \max_{i \in [m]}h_i$ is a convex function as well and it holds that \[g-h = g-\max_{i \in [m]} (g-g_i) = g -(g- \min_{i \in [m]} g_i) = g -(g -f)=f.\]
Let $H_{i,j} \coloneqq \{x \in \R^d \mid f_i(x) = f_j(x)\}$ and $\mathcal{A}_f =\{H_{i,j} \mid i\neq j\}$. Furthermore, let $\mathcal{H}_f$ be  the polyhedral complex induced by the hyperplane arrangement $\mathcal{A}_f$. Then we have that $g,h \in \mathcal{V}_{\mathcal{H}_f}$.
\end{construction}

In the following, we show that for the functions that compute the $k$-th order statistic, both constructions do not yield the unique minimal decompositions, which exist by \Cref{prop:minimal-unique-vertex}. This implies the following result.
\begin{proposition}\label{prop:constr-not-minimal}
    There is a CPWL function $f$ such that \cref{constr:hyperplane_extension,constr:lattice_representation} do not provide a vertex of $\decomp{\complex}{f}$ for any regular polyhedral complex $\complex$ compatible with $f$. 
\end{proposition}
\begin{example}[Hyperplane extension of $k$-th order statistic]\label{ex:hyperplane-median}
\label{ex:median_hyperplane_extension}
Let $f$ be the function from \Cref{ex:median_unique_solution}.
For any $i,j \in [d]$ and $ U \subseteq [d]$ with $i,j\in U$ and $|U|=k+1$ it holds that $\sigma_{i,j,U} \in \supp^+(f)$ and $H_{\sigma_{i,j,U}} = \{x \in \R^d \mid x_i = x_j\}$. Hence, $\complex_g$ is the braid fan (see \Cref{def:braid_arrangement}) and it holds that $g(x) = \binom{d}{k-1}\sum_{i \neq j} \max\{x_i,x_j\}$. Thus, the unique vertex $(g^*,h^*)$ from \Cref{ex:median_unique_solution} is clearly a non-trivial coarsening of the decomposition obtained from the hyperplane extension (since $g^*$ is a non-trivial coarsening of $g$) and hence the decomposition cannot be a vertex of $\decomp{\mathcal{Q}}{f}$ for any regular polyhedral complex $\mathcal{Q}$.

\end{example}
\begin{figure}[ht]
\centering
\begin{subfigure}[t]{.45\textwidth}
\centering
    \begin{tikzpicture}[scale=1]
        \draw[convex] (0,0) -- (0,2) node[above]{$1$};
        \draw[convex] (0,0) -- (0,-2) node[below]{$1$};
        \draw[convex] (0,0) -- (2,0) node[right]{$1$};
        \draw[convex] (0,0) -- (-2,0) node[left]{$1$};
        \draw[convex] (0,0) -- (-2,-2) node[below]{$\sqrt{2}$};
        \draw[convex] (0,0) -- (2,2) node[above]{$\sqrt{2}$};

        \node at (0.9,1.8) {\small $x_1 + 2 x_2$};
        \node at (1.9,0.7) {\small $2x_1 + x_2$};
        \node at (-1,1) {\small $2x_2$};
        \node at (1,-1) {\small $2 x_1$};
        \node at (-0.9,-1.8) {\small $x_1$};
        \node at (-1.9,-0.7) {\small $ x_2$};
    \end{tikzpicture}
      \caption{$g(x) = \max(x_1,x_2) + \max(x_1,0) + \max(x_2,0)$}
    \end{subfigure}
     \hspace{1.5em}
\begin{subfigure}[t]{.4\textwidth}
\centering
    \begin{tikzpicture}[scale=1]
        \draw[black, dotted] (0,0) -- (0,2) node[above]{$0$};
        \draw[concave] (0,0) -- (0,-2) node[below]{$2$};
        \draw[black,dotted] (0,0) -- (2,0) node[right]{$0$};
        \draw[concave] (0,0) -- (-2,0) node[left]{$2$};
        \draw[black,dotted] (0,0) -- (-2,-2) node[below]{$0$};
        \draw[concave] (0,0) -- (2,2) node[above]{$2\sqrt{2}$};

        \node at (0.75,1.5) {$2x_2$};
        \node at (1.5,0.75) {$2x_1$};
        \node at (-1,1) {$2x_2$};
        \node at (1,-1) {$2x_2$};
        \node at (-0.75,-1.5) {$0$};
        \node at (-1.5,-0.75) {$0$};
    \end{tikzpicture}
    \caption{$h(x) = g(x) - f(x)$, where $f$ is the median.}
    \end{subfigure}
      \caption{The hyperplane extension of the median (second largest number) of $0,x_1,x_2$ (i.e., $d=3$) (\Cref{ex:hyperplane-median}), which agrees with the local maxima decomposition (\Cref{ex:lattice-median}) up to a factor $2$. These representations do not agree for the median when $d>3$.}
\end{figure}

\begin{example}[Local maxima decomposition of $k$-th order statistic]\label{ex:lattice-median}
    Let $f$ be the function from \Cref{ex:median_unique_solution}. Then, for $U\subseteq[d]$ with $|U|=k-1$ and $i \in [d] \setminus U$, we have that
    $
        g_{i,U}(x)=\max\limits_{j \in [d]\setminus U}x_j.
    $
    Thus,
    \[
        g(x)= \sum_{i,U} g_{i,U}(x)
        =
        (d-k+1)\cdot
        \sum\limits_{\substack{S \subseteq [d]\\|S|=d-k+1}}
        \max_{j \in S}x_j.
    \]
    Note that $g$ has only breakpoints when two coordinates that are among the $k$ highest coordinates swap places in the ordering. More precisely, the order of the lowest $d-k$ coordinates is irrelevant for $g$. So, for any $T \subseteq [d]$ such that $|T|=d-k$ and any bijection $\pi \colon [k] \to [d] \setminus T$, let
    \[
        P_{T,\pi}\coloneqq
        \{x \in \R^d \mid x_j \leq x_{\pi(1)} \leq \ldots \leq x_{\pi(k)}
        \text{ for all } j \in T\}.
    \]
    It follows that the set of full-dimensional cones $\complex^d_g$ of the unique coarsest polyhedral complex $\complex_g$ compatible with $g$ is given as
    $
        \complex_g^d=\{P_{T,\pi}\}_{T,\pi}.
    $
    Again, the unique vertex $(g^*,h^*)$ from \Cref{ex:median_unique_solution} is clearly a non-trivial coarsening of the decomposition obtained from the lattice representation and hence the decomposition cannot be a vertex of $\decomp{\mathcal{Q}}{f}$ for any regular polyhedral complex $\mathcal{Q}$.
\end{example}
\Cref{constr:hyperplane_extension} and \Cref{constr:lattice_representation} provide decompositions and hence also regular polyhedral complexes $\mathcal{H}_f$ respectively $\mathcal{H}^+_f$ compatible with a given CPWL function $f$. The two examples show that the constructions do not necessarily yield vertices of the decomposition polyhedra $\decomp{\mathcal{H}_f}{f}$ respectively $\decomp{\mathcal{H}^+_f}{f}$. One might be tempted to search within the polyhedra $\decomp{\mathcal{H}_f}{f}$ and $\decomp{\mathcal{H}^+_f}{f}$ for the global optimal solution. However, the global optimal solution is not necessarily contained in these polyhedra, as we will show later. For this, we will use the fact that in dimension two, there is always a unique minimal decomposition due to a result from \citet{tran2024minimal}, which we will sketch in the next section.

\subsection{Balancing}\label{sec:minimal-balancings}
The duality between positively homogeneous convex CPWL functions and Newton polytopes, as described in \Cref{sec:different_perspectives}, serves as a motivation for \citet{tran2024minimal} to construct minimal decompositions $f = g-h$ of positively homogeneous CPWL functions as the difference of two convex such functions in dimension $2$.
\subsubsection{Balancing in dimension 2}
In this section, we present their minimal decomposition which they accomplish by introducing a single new $1$-dimensional face to $\complex_f$ and an adapted weight function to satisfy the balancing condition.

\begin{theorem}[\cite{tran2024minimal}]
\label{thm:unique_2d_balancing}
   For every positively homogeneous CPWL-function $f\colon \R^2\to \R$, there exists a unique (up to adding a linear function) minimal representation as the difference of two convex functions $g,h$. 
\end{theorem}

The decomposition can be obtained as follows.
Let $\complex_f$ be a $2$-dimensional polyhedral fan compatible with $f$ with rays $\rho_1,\dots,\rho_m \subset \R^2$ and ray generators $r_1,\dots,r_m \in \R^2$ such that $\|r_i\|=1$ for $i=1,\dots,m$. Furthermore, let $w_f$ be the corresponding element in $\mathcal W_{\complex_f}$ and $w^+_f \coloneqq \max \{w_f,0\}$. We now define an additional ray $\rho_{m+1}$ with ray generator \[r_{m+1} = -\sum_{i = 1}^m \max(w_f(\rho_i),0) r_i \] and a convex function $g$ through the weights
\[
    w_g(\rho_i) = \begin{cases}
        w_f(\rho_i) & \text{if } w_f(\rho_i) > 0, i \in [m] \\
        0 & \text{if } w_f(\rho_i) \leq 0, i \in [m] \\
         \sum_{i = 1}^m \max(w_f(\rho_i),0) & \text{if } i = m+1.
    \end{cases}
\]
This defines the convex functions $g$, $h = g-f$, and results in a minimal decomposition $f = g-h$ in the 2-dimensional positively homogeneous case. 
Considering this construction through to the duality to Newton polytopes, we can identify rays of $\complex_f$ which correspond to convex breakpoints of $f$ with edges of the Newton polytope $\Newt(g)$, and the construction from \Cref{thm:unique_2d_balancing} adds a "missing" edge to the Newton polygon $\Newt(g)$. 

\subsubsection{Fixing complex can be a problem}
With \Cref{thm:unique_2d_balancing}, we can prove that the decomposition polyhedra $\decomp{\mathcal{H}_f}{f}$ and $\decomp{\mathcal{H}^+_f}{f}$ corresponding to the polyhedral complexes obtained from hyperplane extensions $\mathcal{H}^+_f$ (\Cref{constr:hyperplane_extension}) and local maxima decompositions $\mathcal{H}^+_f$ (\Cref{constr:lattice_representation}) do not necessarily contain the global minimal decomposition of $f$ into convex functions.
\begin{proposition}
\label{re:hyperplane_extension_not_optimal}
    There is a CPWL function $f$ and convex CPWL functions $g,h$ with $f=g-h$ such that every decomposition $(g',h') \in \decomp{\mathcal{H}_f}{f}$ as well as every decomposition $(g',h') \in \decomp{\mathcal{H}^+_f}{f}$ is dominated by $(g,h)$.
\end{proposition}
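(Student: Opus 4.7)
The strategy is to exhibit an explicit counterexample. The key observation is that any convex function compatible with $\mathcal{H}_f$ has its breakpoint hyperplanes contained in $\mathcal{A}_f=\{H_{ij}\mid f_i=f_j\}$, since $\mathcal{H}_f$ is by definition the arrangement of $\mathcal{A}_f$. So to produce $(g,h)\notin\decomp{\mathcal{H}_f}{f}$ it suffices to construct a convex decomposition whose shared breakpoint lies on a hyperplane $H^\star$ outside $\mathcal{A}_f$. Note also that $\mathcal{H}_f^+$ is a coarsening of $\mathcal{H}_f$ (its defining hyperplanes come from $\mathcal{A}_f^+\subseteq\mathcal{A}_f$ together with facets of the chosen $\mathcal{P}$, which also lie in $\mathcal{A}_f$), giving $\decomp{\mathcal{H}_f^+}{f}\subseteq\decomp{\mathcal{H}_f}{f}$; so it suffices to dominate the larger polyhedron.

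First I would engineer $g$ and $h$ as low-piece convex CPWL functions sharing a common breakpoint hyperplane $H^\star$, arranged so that the slope jumps of $g$ and $h$ across $H^\star$ cancel exactly in $g-h=f$. A convenient recipe is to take $g$ and $h$ as translated copies of a single convex hinge function along a sufficiently generic direction, so that the affine components of $f$ inherit enough slope variety to avoid the coincidence $f_i=f_j$ on all of $H^\star$ for any pair of components. This algebraic condition $H^\star\notin\mathcal{A}_f$ is a finite, verifiable combinatorial check on the pieces of $f$. Next I would verify dominance: by \Cref{th:minimal-vertex,thm:vertex_characterization}, every minimal element of $\decomp{\mathcal{H}_f}{f}$ is a vertex determined by its support on the facets of $\mathcal{H}_f$, and an enumeration of the relevant vertices (together with \Cref{lem:unique_coarsest}) would show that every such vertex is strictly worse in piece count than $(g,h)$, since the vertices cannot make use of the external hyperplane $H^\star$ to obtain the same cancellation.

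The main obstacle is the construction itself: the arrangement $\mathcal{A}_f$ is determined implicitly by every pairwise difference of pieces of $f$, and these pieces are themselves combinations of pieces of $g$ and $h$, so ensuring $H^\star\notin\mathcal{A}_f$ requires careful algebraic coordination. Using translates of a symmetric building block along a generic direction is one way to guarantee that no extraneous coincidence forces $H^\star\in\mathcal{A}_f$, but verifying this for a concrete small example amounts to checking all pairs of affine components, which is tractable in dimension $2$ or $3$ with few pieces per function. Once such explicit $f,g,h$ are in hand, dominance over both $\decomp{\mathcal{H}_f}{f}$ and its subset $\decomp{\mathcal{H}_f^+}{f}$ follows by the finite vertex analysis above.
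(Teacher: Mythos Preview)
Your high-level plan is on the right track: construct a decomposition $(g,h)$ whose shared breakpoint lies on a hyperplane outside $\mathcal{A}_f$, and use the inclusion $\decomp{\mathcal{H}_f^+}{f}\subseteq\decomp{\mathcal{H}_f}{f}$ (which you justify correctly) to handle both complexes at once. However, the proposal has a genuine gap in the dominance step, and the suggested construction is not concrete enough to close it.

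The problematic point is the sentence ``an enumeration of the relevant vertices \dots\ would show that every such vertex is strictly worse in piece count than $(g,h)$, since the vertices cannot make use of the external hyperplane $H^\star$.'' Not being able to use $H^\star$ does \emph{not} by itself imply that every vertex of $\decomp{\mathcal{H}_f}{f}$ is dominated by $(g,h)$: a priori some vertex could achieve the same or a smaller piece count via other hyperplanes in $\mathcal{A}_f$. You would need an argument ruling this out, and vertex enumeration alone is not an argument---you would still have to bound the piece counts of each vertex from below. Your proposed construction (``translated copies of a single convex hinge function'') is also too vague to carry this out; as stated, if $g$ and $h$ are single hinges, their breakpoint hyperplanes end up in $\mathcal{A}_f$, so $(g,h)\in\decomp{\mathcal{H}_f}{f}$ and nothing is gained.

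The paper avoids the vertex enumeration entirely by working in dimension~$2$ and invoking \Cref{thm:unique_2d_balancing} (the result of Tran--Wang): for a positively homogeneous CPWL function on $\R^2$ there is a \emph{globally unique} minimal decomposition (up to a linear function). The paper writes down an explicit $2$-dimensional fan with four rays and checks that the unique minimal $(g,h)$ requires introducing a new ray $\rho_5=\cone((-1,-1))$ not contained in the support of $\mathcal{H}_f$. Global uniqueness then immediately forces every $(g',h')\in\decomp{\mathcal{H}_f}{f}$ to be dominated by $(g,h)$, with no vertex analysis needed. This is the missing key lemma in your approach: rather than comparing $(g,h)$ to each vertex by hand, you should certify that $(g,h)$ is the unique global minimum, and the $2$-dimensional theorem is exactly the tool that does this.
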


\begin{proof}
    Let $\complex$ be the polyhedral complex in $\R^2$ with rays $\rho_1=\cone((1,0))$, $\rho_2=\cone((0,1))$, $\rho_3=\cone((1,2))$ and $\rho_4=\cone((2,1))$. Let $w_f(\rho_1)=w_f(\rho_2)=1$ and $w_f(\rho_3)=w_f(\rho_4)=\frac{\sqrt{5}}{3}$. Then according to \Cref{thm:unique_2d_balancing} the unique minimal decomposition is given by the complex obtained by adding the ray $\rho_5=\cone((-1,-1))$ and the weight function
    \[w_g(\rho) = 
\begin{cases}
     w_f(\rho) & \rho \in \supp_\complex^+(f) \\
     0 & \rho \in \supp_\complex^-(f) \\
     \sqrt{2} & \rho =\rho_5  
\end{cases}\] as well as
$w_h=w_g-w_f$. Nevertheless, the ray $\rho_5$ is not contained in (the support of) $\mathcal{H}_f$ and hence this solution is not in $\decomp{\mathcal{H}_f}{f}$. Since $(g,h)$ is the unique (up to adding a linear function) minimal decomposition, it follows that any solution in $\decomp{\mathcal{H}_f}{f}$ must be dominated by $(g,h)$.
Since $\mathcal{H}^+_f$ is a coarsening of $\mathcal{H}_f$, it holds that every decomposition in $\decomp{\mathcal{H}^+_f}{f}$ is contained in $ \decomp{\mathcal{H}_f}{f}$ as well, implying the result for $\decomp{\mathcal{H}^+_f}{f}$.
\end{proof}

\subsubsection{Balancing in higher dimensions}

Based on their $2$-dimensional construction, \citet{tran2024minimal} propose a procedure to reduce the $d$-dimensional case to $2$-dimensions via projections.

\begin{construction}[{\citet[Section 4.1]{tran2024minimal}}]\label{constr:ngoc}
Let $f \colon \R^d \to \R$ be a positively homogeneous CPWL-function and $\complex$ a polyhedral fan compatible with $f$. The attempt is to balance $w^+_f$ locally around every $\tau \in \complex^{d-2}$ and then ``glue together'' the local balancings to a global balancing. So, for some $\tau \in \complex^{d-2}$, suppose that $\{\sigma_1,\ldots, \sigma_k \}= \str_\complex(\tau)$ are the cones containing $\tau$. The rays spanned by $e_{\sigma_i/\tau}$ that inherit the weights $w^+_f(\sigma_i)$ for $i\in [k]$ induce a $2$-dimensional fan $\complex_\tau$ in the $2$-dimensional linear space $\spn(\tau)^\perp$ orthogonal to $\spn(\tau)$. Let $P_\tau$ be the polygon in $\spn(\tau)^\perp$ corresponding to the minimal balancing of $w^+_f$ regarded as map $w^+_f \colon \complex_\tau^1 \to \R$. Now proceed with the following steps.

\begin{enumerate}[topsep=1mm,itemsep=0mm]
    \item For every $\tau \in \complex^{d-2}$, construct the polygon $P_\tau$.
    \item Place the polygons $P_\tau$ in $\R^d$ in such a way, that whenever $\tau_1,\tau_2 \in \complex^{d-2}$ are faces of $\sigma \in \supp^+_\complex(f)$, then the edges in $P_{\tau_1}$ and $P_{\tau_2}$ that correspond to $\sigma$ are identified with each other. 
    \item Take the convex hull $P_g$ of the polygons $\{P_\tau\}_{\tau \in \complex^{d-2}}$.
    \item The support function $g$ of the polytope $P_g$ and $h\coloneqq g-f$ are a decomposition of $f$.
\end{enumerate}
\end{construction}

 One can check that for some $\sigma \in \complex^{d-1}$ and $\tau\in \complex^{d-2}$ being a face of $\sigma$, the edge $e_{\sigma}$ of length $w(\sigma)$ which is perpendicular in $\spn(\tau)^\perp$ to $\tau$ is independent of the choice of the face $\tau$. In particular, the direction of the edge $e_\sigma$ is normal to the hyperplane spanned by $\sigma$.  However, it remained unclear whether or not the second step in this procedure is always well-defined, that is, that placing the polygons in such a coherent way is possible. To make this more precise, let for some $\tau \in \complex^{d-2}$ the edges of the polygon $P_\tau$ be given in a cyclic way $\{e_{\sigma_1}, \ldots, e_{\sigma_m}\}$. Placing a polygon $P_\tau$ refers to choosing an $x_\tau \in \R^d$ and defining the placed polygon as $P_\tau(x_\tau)=\conv(x_\tau,x_\tau+e_{\sigma_1},x_\tau+ e_{\sigma_1}+e_{\sigma_2},\ldots,x_\tau+\sum_{i=1}^m e_{\sigma_m})$. Placing them in a coherent way means choosing an $x_\tau \in \R^d$ for every $\tau \in \complex^{d-2}$ such that $P_{\tau_1}(x_{\tau_1}) \cap P_{\tau_2}(x_{\tau_2}) = \conv(x_\sigma, x_\sigma + e_\sigma)$ for some $x_\sigma \in \R^d$ whenever $\tau_1$ and $\tau_2$ are faces of $\sigma$. A priori it is not clear that such $x_\tau$ always exist. The following example will in fact show that the resulting linear equation system not always yields a solution.

\begin{example}[Counterexample to \Cref{constr:ngoc}]

\label{ex:counter_example_balancing}
   \Cref{fig:non_coherent_polygons} is an illustration of $4$ polygons with labelled edges that cannot be placed in $\R^3$ such that the edges of different polygons with the same label are identified with each other. Hence, applying the above procedure to the CPWL-function $f\colon \R^3 \to \R$ given by 
    \[
    f(x)= \max\{0,\max\limits_{\substack{i,j\in [3]\\ i\neq j}}\{\min \{x_i,x_j-x_i\}\}\}
    \] is not well-defined since these $4$ polygons arise and should be identified in the indicated way, which is impossible.

\begin{figure}[ht]
    \centering
\begin{tikzpicture}
    \draw(0,0,0) -- ++(0,2,-2) node[midway,left]{$e_{\sigma_1}$};
    \draw[blue] (0,2,-2) --++(1,-1,1) node[midway,right]{$e_{\sigma_2}$};
    \draw[orange](1,1,-1) -- (0,0,0) node[midway,right]{$e_{\sigma_3}$};
    \node at (0.5,1,-1) {$P_{\rho_1}$};
    
    \draw[blue] (1.5,1.5,-2) -- ++(-1,1,-1) node[midway,left]{$e_{\sigma_2}$};
    \draw (0.5,2.5,-3) -- ++(1,0,-1) node[midway,above]{$e_{\sigma_7}$};
    \draw (1.5,2.5,-4) -- ++(1,-1,1) node[midway,right]{$e_{\sigma_8}$};
    \draw[green] (2.5,1.5,-3) -- (1.5,1.5,-2) node[midway,below]{$e_{\sigma_9}$};
    \node at (1.5,2,-3) {$P_{\rho_3}$};
    
    \draw (3.5,0.5,-2) -- ++(1,-1,0) node[midway,right]{$e_{\sigma_{10}}$};
    \draw (4.5,-0.5,-2) -- ++(0,-1,1) node[midway,right]{$e_{\sigma_{11}}$};
    \draw (4.5,-1.5,-1) -- ++(-1,0,1) node[midway,below]{$e_{\sigma_{12}}$};
    \draw[purple] (3.5,-1.5,0) -- ++(-1,1,0)  node[midway,below]{$e_{\sigma_{4}}$};
    \draw (2.5,-0.5,0) -- ++(0,1,-1)  node[midway,left]{$e_{\sigma_{13}}$};
    \draw[green] (2.5,0.5,-1) -- (3.5,0.5,-2)  node[midway,above]{$e_{\sigma_9}$};
    \node at (3.5,-0.5,-1) {$P_{\rho_4}$};

    \draw (1,-4,-1) -- ++(1,1,-1) node[midway,right]{$e_{\sigma_{5}}$};
    \draw[purple] (2,-3,-2) -- ++(-1,1,0) node[midway,right]{$e_{\sigma_{4}}$};
    \draw[orange] (1,-2,-2) -- ++(-1,-1,1) node[midway,left]{$e_{\sigma_{3}}$};
    \draw (0,-3,-1) -- (1,-4,-1) node[midway,below]{$e_{\sigma_{6}}$};
    \node at (1,-3,-1.5) {$P_{\rho_2}$};
\end{tikzpicture}
\caption{4 polygons that cannot be placed in a coherent way in $\R^3$}
\label{fig:non_coherent_polygons}
\end{figure}

         We describe the $2$-skeleton of a polyhedral fan $\complex$ that is compatible with $f$. Since $f$ is positively homogeneous,  $\complex$ is a fan. Thus, its combinatorics can be visualized by intersecting it with a sphere centered at the origin and projecting radially; this is the role of \Cref{fig:fan_counter_example}. The full-dimensional cones of the fan are the linear regions of $f$, and on each such region $f$ agrees with one of the linear forms appearing in its defining expression, namely one of $0$, $x_i$, or $x_j-x_i$.Recall that $e_i$ is the $i$-th standard unit vector.
    The rays are given as follows:
    \begin{multline*}
        \complex^1= \{\cone(-e_i),
        \cone(e_i),
        \cone(e_i+e_j),
        \cone(e_i+e_j+2e_k),
        \cone(e_i+2e_j+2e_k),\\
        \cone(e_i+e_j+2e_k),
        \cone(e_i+e_j+e_k)\mid i,j,k \in [3] \text{ pairwise distinct}\}
    \end{multline*}
    
    and the $2$-dimensional cones as
    \begin{align*}
        \complex^2= \{&\cone(e_i,-e_j),
        \cone(-e_i,e_i+e_j+2e_k),
        \cone(-e_i,e_j+e_k),
        \cone(e_i,e_i+e_j+2e_k),\\&
        \cone(e_j+e_k,e_i+e_j+2e_k),
        \cone(e_i+e_j+2e_k,e_i+2e_j+2e_k),\\&
        \cone(e_i+e_j+2e_k,e_i+e_j+e_k),
        \cone(e_i+2e_j+2e_k,e_i+e_j+e_k),\\&
        \cone(e_j+e_k,e_i+2e_j+2e_k) \mid i,j,k \in [3] \text{ pairwise distinct}\}
    \end{align*}
    The weight function $w_f \colon \complex^2\to \R$ given by  \begin{align*}
        &w(\cone(e_i,-e_j))=1\\
        &w(\cone(-e_i,e_i+e_j+2e_k))=-\sqrt{5}\\
        &w(\cone(-e_i,e_j+e_k))=\sqrt{8}\\
        &w(\cone(e_i,e_i+e_j+2e_k))=\sqrt{2},\\
        &w(\cone(e_j+e_k,e_i+e_j+2e_k))=\sqrt{3},\\
        &w(\cone(e_i+e_j+2e_k,e_i+2e_j+2e_k))=-\sqrt{5},\\
        &w(\cone(e_i+e_j+2e_k,e_i+e_j+e_k))=\sqrt{2},\\
        &w(\cone(e_i+2e_j+2e_k,e_i+e_j+e_k))=\sqrt{2},\\
        &w(\cone(e_j+e_k,e_i+2e_j+2e_k))=0
    \end{align*}
corresponds to the function $f$ and is therefore balanced. See \Cref{fig:fan_counter_example} for a $2$-dimensional illustration of $\complex$.
\begin{figure}

    \centering
\begin{tikzpicture}[scale=0.8]
\node (-100) at (-9,0) {$(-1,0,0)$};
\node (010) at (0,9) {$(0,1,0)$};
\node (00-1) at (9,9) {$(0,0,-1)$};
\node (100) at (9,0) {$(1,0,0)$};
\node (0-10) at (0,-9) {$(0,-1,0)$};
\node (001) at (-9,-9) {$(0,0,1)$};

\node (011) at (-6,0){$\rho_1=(0,1,1)$};
\node (211) at (3,0){$(2,1,1)$};
\node (110) at (6,6){$(1,1,0)$};
\node (112) at (-3,-3){$\rho_2=(1,1,2)$};
\node (121) at (0,3){$\rho_3=(1,2,1)$};
\node (101) at (0,-6){$(1,0,1)$};

\node (122) at (-3,0){$(1,2,2)$};
\node (221) at (3,3){$(2,2,1)$};
\node (212) at (0,-3){$(2,1,2)$};

\node (00) at (0,0){$\rho_4=(1,1,1)$};

\draw[convex] (-100) -- (010);
\draw[convex] (010) -- (00-1);
\draw[convex] (00-1) -- (100);
\draw[convex] (100) -- (0-10);
\draw[convex] (0-10) -- (001);
\draw[convex] (001) -- (-100);

\draw[convex] (-100) -- (011) node[midway,below]{$\sigma_1$};
\draw[convex] (100) -- (211);
\draw[convex] (001) -- (112) node[midway,left]{$\sigma_6$};
\draw[convex] (00-1) -- (110);
\draw[convex] (0-10) -- (101);
\draw[convex] (010) -- (121) node[midway,right]{$\sigma_7$};

\draw[convex] (011) -- (121) node[midway,below]{$\sigma_2$};
\draw[convex] (121) -- (110) node[midway,below]{$\sigma_8$};
\draw[convex] (110) -- (211);
\draw[convex] (211) -- (101);
\draw[convex] (101) -- (112) node[midway,above]{$\sigma_5$};
\draw[convex] (112) -- (011) node[midway,right]{$\sigma_3$};

\draw[concave,dashed]  (122) -- (121);
\draw[concave,dashed]  (122) -- (112);
\draw[concave,dashed]  (212) -- (112);
\draw[concave,dashed]  (212) -- (211);
\draw[concave,dashed]  (221) -- (211);
\draw[concave,dashed]  (221) -- (121);

\draw[convex] (00) -- (212) node[midway,left]{$\sigma_{12}$};
\draw[convex] (00) -- (121) node[midway,left]{$\sigma_9$};
\draw[convex] (00) -- (122) node[midway,above]{$\sigma_{13}$};
\draw[convex] (00) -- (211) node[midway,above]{$\sigma_{11}$};
\draw[convex] (00) -- (221) node[midway,left]{$\sigma_{10}$};
\draw[convex] (00) -- (112) node[midway,left]{$\sigma_4$};

\draw[concave,dashed] (0-10) -- (112);
\draw[concave,dashed] (-100) -- (112);
\draw[concave,dashed] (-100) -- (121);
\draw[concave,dashed] (0-10) -- (211);
\draw[concave,dashed] (00-1) -- (211);
\draw[concave,dashed] (00-1) -- (121);

\draw[gray,dotted,very thick] (101) -- (212);
\draw[gray,dotted, very thick] (011) -- (122);
\draw[gray,dotted,very thick] (110) -- (221);
\end{tikzpicture}
\caption{A $2$-dimensional representation of $\complex$. The blue lines correspond to convex breakpoints of the function $f$, that is, a cone $\sigma \in \complex^2$ such that $w(\sigma) > 0$. The concave breakpoints ($w(\sigma)<0)$ are dashed and colored in green. $f$ has no breakpoints on the gray, dotted lines ($w(\sigma)=0$).}
\label{fig:fan_counter_example}
\end{figure}

Consider the $4$ rays 
\begin{align*}
    \rho_1 = \cone((0,1,1)),\rho_2 = \cone((1,1,2)),\rho_3 = \cone((1,2,1)),\rho_4=\cone((1,1,1))
\end{align*} For the four rays $\rho_1,\rho_2,\rho_3,\rho_4$ considered below, the positive part $w_f^+$ is already balanced in the corresponding two-dimensional quotient fans. Equivalently, the positive edge vectors around each of these rays already close up to form a polygon. Hence, in the local two-dimensional step of \Cref{constr:ngoc}, no additional ``missing'' edge is introduced at these rays. The obstruction is therefore not local balancing, but the subsequent global gluing step: the locally constructed polygons cannot be placed in $\R^3$ so that equally labelled edges are identified coherently. We will see that the corresponding polygons $P_{\rho_1},P_{\rho_2},P_{\rho_3}$ and $P_{\rho_4}$ equal the ones in Figure~\ref{fig:non_coherent_polygons}. The $2$-dimensional cones which are in the stars of the $4$ rays are the following:
\begin{align*}
    &\sigma_{1}=\cone((0,1,1),(-1,0,0)),\, \sigma_2=\cone((0,1,1),(1,2,1)),\,  \sigma_3=\cone((0,1,1),(1,1,2)),\\&\sigma_4=\cone((1,1,2),(1,1,1)), \sigma_5=\cone((1,1,2),(1,0,1)),\,  \sigma_6=\cone((1,1,2),(0,0,1)),\\&\sigma_7=\cone((1,2,1),(0,1,0)),\,  \sigma_8=\cone((1,2,1),(1,1,0)),\, \sigma_9=\cone((1,2,1),(1,1,1)),\\&\sigma_{10}=\cone((1,1,1),(2,2,1)),\,  \sigma_{11}=\cone((1,1,1),(2,1,1),\,  \sigma_{12}=\cone((1,1,1),(2,1,2),\\& \sigma_{13}=\cone((1,2,2),(1,1,1)),
\end{align*}

The directions of the edges of the polygons are given by the normal vectors of the hyperplanes spanned by the corresponding $2$-dimensional cone and their length by the weight of the corresponding $2$-dimensional cone.

\begin{align*}
&e_{\sigma_1}=(0,2,-2),\,  e_{\sigma_2}=(1,-1,1),\,  e_{\sigma_3}=(-1,-1,1), \,e_{\sigma_4}=(1,-1,0),\,  e_{\sigma_5}=(1,1,-1),\,  \\ 
&e_{\sigma_6}=(-1,1,0),\, e_{\sigma_7}=(1,0,-1),\,  e_{\sigma_8}=(-1,1,-1),\,  e_{\sigma_9}=(-1,0,1), \,e_{\sigma_{10}}=(1,-1,0),\\ 
&e_{\sigma_{11}}=(0,-1,1),\, e_{\sigma_{12}}=(-1,0,1),\, e_{\sigma_{13}}=(0,1,-1)
\end{align*} 
In order to construct the polygons, one needs to consider the orientation of the edge (and the normal vector) in the particular polygon.
One can convince themselves that the polygons $P_{\rho_1},P_{\rho_2},P_{\rho_3}$ and $P_{\rho_4}$  of the $4$ rays are given as:
\begin{align*}
P_{\rho_1}(x_{\rho_1})&=\conv(x_{\rho_1}+e_{\sigma_1}, x_{\rho_1}+e_{\sigma_1}+e_{\sigma_2},x_{\rho_1}+e_{\sigma_1}+e_{\sigma_2}+e_{\sigma_3}) \\
P_{\rho_2}(x_{\rho_2})&=\conv(x_{\rho_2}-e_{\sigma_3}, x_{\rho_2}-e_{\sigma_3}-e_{\sigma_4},x_{\rho_2}-e_{\sigma_3}-e_{\sigma_4}-e_{\sigma_5},x_{\rho_2}-e_{\sigma_3}-e_{\sigma_4}-e_{\sigma_5}-e_{\sigma_6})
\\P_{\rho_3}(x_{\rho_3})&=\conv(x_{\rho_3}-e_{\sigma_2},x_{\rho_3}-e_{\sigma_2}+e_{\sigma_7},x_{\rho_3}-e_{\sigma_2}+e_{\sigma_7}+e_{\sigma_8},x_{\rho_3}-e_{\sigma_2}+e_{\sigma_7}+e_{\sigma_8}+e_{\sigma_9})
\\P_{\rho_4}(x_{\rho_4})&=\conv(x_{\rho_3}+e_{\sigma_9},x_{\rho_4}+e_{\sigma_9}+e_{\sigma_{10}},x_{\rho_4}+e_{\sigma_9}+e_{\sigma_{10}}+e_{\sigma_{11}},\\&x_{\rho_4}+e_{\sigma_9}+e_{\sigma_{10}}+e_{\sigma_{11}}+e_{\sigma_{12}},x_{\rho_4}+e_{\sigma_9}+e_{\sigma_{10}}+e_{\sigma_{11}}+e_{\sigma_{12}}+e_{\sigma_4},\\&x_{\rho_4}+e_{\sigma_9}+e_{\sigma_{10}}+e_{\sigma_{11}}+e_{\sigma_{12}}+e_{\sigma_4}+e_{\sigma_{13}}).
\end{align*}

Figure~\ref{fig:non_coherent_polygons} already shows that they cannot be placed in a coherent way.
To make this mathematically precise, one obtains a system of linear equations for $x_{\rho_1},x_{\rho_2},x_{\rho_3}$ and $x_{\rho_4}$, by plugging in the values for the normal vectors, that ensures that the edges for the same $2$-dimensional cone in different polygons are identified. This linear equation system does not have a solution.

  \end{example}

Our conclusion is that the construction by \citet{tran2024minimal} does not extend beyond the $2$-dimensional case, leaving it an open problem to find any finite algorithm that guarantees to return a minimal decomposition without fixing an underlying polyhedral complex. Note that for any fixed underlying polyhedral complex, \Cref{th:minimal-vertex} implies a finite algorithm by enumerating the vertices of the decomposition polyhedron, as discussed earlier.

\section{Submodular Functions}\label{sec:submodular}
In this section, we demonstrate that a special case of our framework is to decompose a general set function into a difference of submodular set functions and translate our results to this setting.
Such decompositions are a popular approach to solve optimization problems, as discussed in the introduction.

\begin{definition}
\label{def:braid_arrangement}
    The \emph{braid arrangement} in $\R^d$ is the hyperplane arrangement consisting of the $\binom{d}{2}$ hyperplanes $x_i=x_j$, with $1\leq i<j \leq d$.    The \emph{braid fan} $\B_d$ is the polyhedral fan induced by the braid arrangement.
\end{definition}

 Let $\F_d$ be the vector space of set functions from $2^{[d]}$ to $\R$. We first show that functions in $\V{\B_d}$ are in one-to-one correspondence with the set functions $\F_d$. To this end, for a set $S\subseteq[d]$, let $\ind_S=\sum_{i\in S} e_i$ be the indicator vector of $S$, that is, the vector that contains entries $1$ for indices in $S$ and $0$ otherwise.

\begin{proposition}
\label{prop:iso_cpwl_to_setfunction}
    The linear map $\Phi \colon \FB{d} \to \Sf{d}$  given by $\Phi(f) = F(S) = f(\ind_S)$ is an isomorphism.
\end{proposition}

\begin{proof}
 The map $\Phi$ is clearly a linear map.
    To prove that $\Phi$ is an isomorphism, we show that a function $f\in \FB{d}$ is uniquely determined by its values on $\{\ind_S\}_{S\subseteq [d]}$ and any choice of real values $\{y_S\}_{S\subseteq [d]}$ gives rise to a function $f\in \FB{d}$ such that $f(\ind_S)=y_S$.

    First, note that the maximal cones of $\B_d$ are of the form $C_{\pi}=\{x \in \R^d \mid x_{\pi(1)} \leq \ldots \leq  x_{\pi(d)}\}$ for a permutation $\pi \colon [d] \to [d]$. There are exactly the $d+1$ indicator vectors $\{\ind_{S_i}\}_{i=0,\ldots,d}$ contained in $C_{\pi}$, where $S_i \coloneqq\{\pi(d+1-i),\ldots,\pi(d)\}$ for $i \in [d]$ and $S_0 \coloneqq \emptyset$. Moreover, the vectors $\{\ind_{S_i}\}_{i=0,\ldots,d}$ are affinely independent and hence the values $\{f(\ind_{S_i})\}_{i=0,\ldots,d}$ uniquely determine the affine linear function $f_{|C_{\pi}}$. Therefore, $f$ is uniquely determined by $\{f(\ind_{S})\}_{S\subseteq[d]}$.
    
    Given any values $\{y_S\}_{S \subseteq [d]}$, by the discussion above, there are unique affine linear maps $f_{|C_{\pi}}$ yielding $f_{|C_{\pi}}(\ind_S)=y_S$ for all $S \subseteq [d]$ such that $\ind_S \in C_{\pi}$. It remains to show that the resulting function $f$ is well-defined on the facets. Any such facet is of the form \[C_{\pi,i}=\{x \in \R^d \mid x_{\pi(1)} \leq \ldots \leq x_{\pi(i)} = x_{\pi(i+1)} \leq \ldots \leq  x_{\pi(d)}\},\] which is the intersection of $C_{\pi}$ and $C_{\pi \circ (i,i+1)}$,  where $(i,i+1)$ denotes the transposition swapping $i$ and $i+1$. However, the indicator vectors $\{\ind_{S_i}\}_{i\in[d]\setminus \{i\}}$ contained in $C_{\pi,i}$ are a subset of the indicator vectors contained in $C_{\pi}$ and $C_{\pi \circ (i,i+1)}$. Therefore, it holds that $f_{|C_{\pi}}(x)=f_{|C_{\pi \circ (i,i+1)}}(x)$ for all $x \in C_{\pi,i}$ implying that $f$ is well-defined as a CPWL function.
\end{proof}

If we think this the other way around, starting with a set function $F$, then $f=\Phi^{-1}(F)$ is by definition a continuous extension of $F$. It turns out that this particular extension is known as the \emph{Lov{\'a}sz extension}~\citep{lovasz1983submodular}, as we argue below. The Lov{\'a}sz extension is an important concept in the theory and practice of submodular function optimization as it provides a link between \emph{discrete} submodular functions and \emph{continuous} convex functions.

\begin{definition}
    For a set function $F \colon 2^{[d]} \to \R$, the \emph{Lov{\'a}sz extension} $f \colon \R^d \to \R$ is defined by $f(x) = \sum_{i=0}^d \lambda_i F(S_i)$, where $\emptyset=S_0 \subset S_1 \subset \ldots \subset S_d = [d]$ is a chain such that $ \sum_{i=1}^d \lambda_i \ind_{S_i} =x$ and $\lambda_i \geq 0$ for all $i\in [d]$ and $\lambda_0=1-\sum_{i=1}^d\lambda_i$.
\end{definition}

\begin{remark}
    In many contexts in the literature, the Lov{\'a}sz extension is only defined on the hypercube $[0,1]^d$. For our purposes, it is more convenient to omit this restriction, which is captured by the above definition.
\end{remark}

The intuition of the Lov\'asz extension can already be seen in the proof of \Cref{prop:iso_cpwl_to_setfunction}: depending on the ordering of the components of an input vector~$x$, the Lov\'asz extension writes~$x$ as an affine combination of indicator vectors $\ind_{S_i}$, and uses the coefficients of the affine combination to compute the value $f(x)$. Following the intuition, we see in the next proposition that $\Phi^{-1}(F)$ is actually the Lov{\'a}sz extension of~$F$.

\begin{proposition}\label{prop:lovasz-extension}
    For a set function $F\in \F_d$, the function $f=\Phi^{-1}(F)$ is precisely the Lov{\'a}sz extension of~$F$.
\end{proposition}
\begin{proof}
    By the definition of the Lov{\'a}sz extension, it follows that it is compatible with $\B_d$. Thus, the Lov{\'a}sz extension is contained in $\FB{d}$. Moreover, as it is an extension that fixes indicator vectors, it follows that~$\Phi$ applied to the Lov{\'a}sz extension of $F$ gives back $F$. As $\Phi$ is an isomorphism by \Cref{prop:iso_cpwl_to_setfunction}, the Lov{\'a}sz extension must be exactly $\Phi^{-1}(F)$.
\end{proof}

\begin{definition}
   A set function $F \colon 2^{[d]} \to \R$ is called \emph{submodular} if 
   \begin{equation}
       F(A) +F(B) \geq F(A \cup B) + F(A \cap B)\label{eq:submodular}
   \end{equation}for all $A,B \subseteq [d]$. $F$ is called \emph{modular} if equality holds for all $A,B \subseteq [d]$.
\end{definition}

The following well-known property is key to the insights of this section.

\begin{proposition}[\cite{lovasz1983submodular}]
    A set function $F$ is submodular if and only if its Lov{\'a}sz extension $f=\Phi^{-1}(F)$ is convex.
\end{proposition}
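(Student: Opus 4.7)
The plan is to prove the two directions separately, using a direct midpoint argument for convex $\Rightarrow$ submodular and the weight-function characterization of \Cref{lem:nonnegative} for submodular $\Rightarrow$ convex.

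For convex $\Rightarrow$ submodular, I would argue directly. For any $A, B \subseteq [n]$, the identity $\ind_A + \ind_B = \ind_{A \cap B} + \ind_{A \cup B}$ shows that the midpoints $\tfrac{1}{2}(\ind_A + \ind_B)$ and $\tfrac{1}{2}(\ind_{A \cap B} + \ind_{A \cup B})$ coincide. The key geometric observation is that the entire segment from $\ind_{A \cap B}$ to $\ind_{A \cup B}$ lies in a single maximal cell $P_\pi$ of $\PP$, for any permutation $\pi$ that first lists the elements of $[n] \setminus (A \cup B)$, then the elements of $A \triangle B$, then the elements of $A \cap B$; on the segment these three groups of coordinates take values $0$, $t$, $1$ respectively with $t \in [0,1]$. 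Hence $f$ is affine on this segment and $f(\tfrac{1}{2}(\ind_{A \cap B} + \ind_{A \cup B})) = \tfrac{1}{2}(F(A \cap B) + F(A \cup B))$. Convexity of $f$ applied at the same point gives $f(\tfrac{1}{2}(\ind_A + \ind_B)) \leq \tfrac{1}{2}(F(A) + F(B))$, and the submodularity inequality follows immediately.

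For submodular $\Rightarrow$ convex, by \Cref{lem:nonnegative} it suffices to show that $w_f$ is nonnegative on every facet of $\PP$. Every such facet is of the form $\sigma_{\pi, i} = \{x : x_{\pi(1)} \leq \dots \leq x_{\pi(i)} = x_{\pi(i+1)} \leq \dots \leq x_{\pi(n)}\}$, shared by the adjacent cells $P_\pi$ and $P_{\pi \circ (i, i+1)}$. Setting $a = \pi(i)$, $b = \pi(i+1)$, and $T = \{\pi(i+2), \dots, \pi(n)\}$, the affine restrictions of $f$ to the two neighboring cells agree on the hyperplane $x_a = x_b$ and therefore differ by a scalar multiple of $x_a - x_b$. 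Evaluating both affine pieces at the indicator vector $\ind_{T \cup \{a\}}$ (which lies in $P_{\pi \circ (i, i+1)}$ but not in $P_\pi$) and unwinding the chain formula defining the Lov\'asz extension, one obtains that $w_f(\sigma_{\pi, i})$ equals, up to a positive constant depending only on the geometry of the facet, $F(T \cup \{a\}) + F(T \cup \{b\}) - F(T) - F(T \cup \{a, b\})$. This is precisely the submodularity inequality for the pair $A = T \cup \{a\}$, $B = T \cup \{b\}$, and so it is nonnegative.

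The main obstacle is the explicit weight computation in the forward direction: one has to track the two slightly different chain decompositions associated with $\pi$ and $\pi \circ (i, i+1)$, verify that the difference of the two affine pieces is indeed a multiple of $x_a - x_b$ (and not some other linear form), and confirm that the sign and normalization match the definition of $w_f$ via the unit normal $e_{\sigma/\tau}$. This is routine but bookkeeping-intensive. Alternatively, one can replace this step by the classical fact that submodularity of $F$ is equivalent to the ``local'' exchange inequalities $F(T \cup \{a\}) + F(T \cup \{b\}) \geq F(T) + F(T \cup \{a, b\})$ for all disjoint $T, \{a\}, \{b\}$ (proved by induction on $|A \triangle B|$), in which case the weight computation and the biconditional reduce to a single symmetric argument.
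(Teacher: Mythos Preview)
The paper does not actually supply a proof of this proposition: it is stated with the citation \citep{lovasz1983submodular} and used as a black box. So there is no in-paper argument to compare against.

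That said, your proposal is correct and in fact nicely tailored to the paper's framework. The convex $\Rightarrow$ submodular direction via the midpoint identity $\ind_A+\ind_B=\ind_{A\cap B}+\ind_{A\cup B}$ and affinity of $f$ on the segment $[\ind_{A\cap B},\ind_{A\cup B}]$ is the standard argument and goes through exactly as you describe. For the other direction, your idea of invoking \Cref{lem:nonnegative} and computing $w_f$ on each facet $\sigma_{\pi,i}$ is more in the spirit of this paper than the classical proof (which typically goes through the greedy/base-polytope description of the Lov\'asz extension). Carrying out the bookkeeping explicitly, one finds
\[
w_f(\sigma_{\pi,i})=\sqrt{2}\bigl(F(T\cup\{a\})+F(T\cup\{b\})-F(T)-F(T\cup\{a,b\})\bigr),
\]
with $a=\pi(i)$, $b=\pi(i+1)$, $T=\{\pi(i+2),\dots,\pi(n)\}$, so nonnegativity of $w_f$ is exactly the family of ``diminishing returns'' inequalities, which is well known to be equivalent to submodularity. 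The only point to be careful about is the sign of the unit normal $e_{P_\pi/\sigma}$: since $x_{\pi(i)}\le x_{\pi(i+1)}$ on $P_\pi$, it points in the direction $e_b-e_a$, and this is what makes the constant come out positive rather than negative. Once that is pinned down, the computation is routine.
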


Applying our insights from \Cref{sec:compatible} to the previous proposition, we obtain the following well-known statement.

\begin{corollary}
    The set of submodular functions forms a  polyhedral cone $\submod_d$ in the vector space $\F_d$.
\end{corollary}

In particular, we can specialize \Cref{problem} in the setting of this section as follows.

\begin{problem}\label{prob:decomp_lovasz_extension2}
    Given a set function $F\in\F_d$, how to decompose it into a difference of submodular set functions such that their Lov{\'a}sz extensions have as few pieces as possible?
\end{problem}

Having a Lov{\'a}sz extension with few pieces is desirable because it allows the submodular function to be stored and accessed efficiently during computational tasks. As \Cref{prob:decomp_lovasz_extension2} is a special case of \Cref{problem}, we are able to translate our results from \Cref{sec:compatible} to the setting of submodular functions.

Let $\modular_d\subseteq \F_d$ be the vector space of \emph{modular} functions, that is, set functions that satisfy \Cref{eq:submodular} with equality. Note that a set function is modular if and only if its Lov{\'a}sz extension is an affine function \citep{lovasz1983submodular}. Since for any $M  \in \modular_d$, a set function $F$ is submodular if and only if $F+M$ is submodular, we define the vector space $\overline{\F}_d = \F_d / \modular_d$ of set functions modulo modular functions. Furthermore, let $\overline{\submod}_d$ be the cone of submodular functions in this quotient.

A decomposition $(G,H) \in \overline{\submod}_d \times \overline{\submod}_d$  of a set function $F\in \overline{\F}_d$ is called \emph{irreducible} if there does not exist a submodular function $I \in \overline{\submod}_d\setminus \{0\}$ such that $G-I$ and $H-I$ are submodular. Since a set function $M \in \F_d$ is modular if and only if $\Phi^{-1}(M)$ is affine linear, the isomorphism $\Phi$ of Proposition~\ref{prop:iso_cpwl_to_setfunction} descends to an isomorphism $\Phi \colon \overline{\F}_d \to \Vmod{\B_d}$ such that $\Phi(\overline{\submod}_d) = \VPmod{\B_d}$.

    For a set function $F \in \overline{\F}_d$, the set of decompositions \[\decomp{}{F}\coloneqq \{(G,H) \in \overline{\submod}_d \times \overline{\submod}_d \mid F=G-H\}\] is a polyhedron.

\begin{corollary}
 A decomposition $(G,H)$ is irreducible if and only if $(G,H)$ is contained in a bounded face of $\decomp{}{F}$.
\end{corollary}
\begin{proof}
    The extension of $\overline{\Phi}$ to the cartesian product $\overline{\Phi} \times \overline{\Phi} \colon \Vmod{\B_d} \times \Vmod{\B_d} \to \overline{\F}_d \times \overline{\F}_d$ is an isomorphism. Then the statement follows from the fact that $\decomp{}{F}= (\overline{\Phi}\times \overline{\Phi}) (\decomp{\B_d}{\Phi^{-1}(F)}$ and \Cref{th:decomp-reduced}.
\end{proof}
\begin{definition}
    For a submodular function $F \colon 2^{[d]} \to \R$, the \emph{base polytope} $B(F)$ is defined as \[B(F) \coloneqq \{x \in \R^d \mid \sum_{i \in S} x_i \leq F(S) \, \forall S\subset [d], \sum_{i \in [d]}x_i = F([d])\}.\]
\end{definition}
 
Since we factored out modular functions, we can assume without loss of generality that a set function $F \in \overline{\F}_d$ is \emph{normalized}, that is,  $F(\emptyset) =0$.  In this case, $f=\Phi^{-1}(F)$ is positively homogeneous. For the remainder of this section, we will assume all set functions to be normalized and all CPWL functions to be positively homogeneous. If $F$ is submodular, $f$ agrees with the support function of the base polytope $B(F)$, and therefore $B(F)$ is the Newton polytope $\Newt(f)$ of the Lov\'asz extension $f$ (see e.g. \cite{Aguiar2017HopfMA} Theorem~12.3.).  The Newton polytopes of functions that differ by a linear map are translations of each other and modular functions correspond to linear functions \citep{lovasz1983submodular}.  Hence, the set of base polytopes in $\R^d$ modulo translation is precisely the set of Newton polytopes of functions compatible with the braid fan, which are also known as \emph{generalized permutahedra}. We denote the cone of generalized permutahedra by $\GPmod_d$ and hence the maps \[B \colon \overline{\F}_d \to \GPmod_d, F \mapsto B(F)\] and \[\Newt \colon \VPmod{\B_d} \to \GPmod_d, f \mapsto \Newt(f)\] are well-defined and we obtain the following diagram:
\[
\begin{tikzcd}
   \overline{\submod}_d \arrow{r}{B} \arrow{d}[swap]{\overline{\Phi}}&\GPmod_d \\
   \VPmod{\B_d} \arrow[swap]{ur}{\Newt}\end{tikzcd}\]
 In this setting, we call a decomposition $(G,H)\in\decomp{}{F}$ \emph{minimal},
    if it is not \emph{dominated} by any other decomposition, that is, if there is no other decomposition $(G',H')\in\decomp{}{F}$ where $B(G')$ has at most as many vertices as $B(G)$, $B(H')$ has at most as many vertices as $B(H)$, and one of the two has strictly fewer vertices.

For a tuple of submodular functions $(G,H) \in \overline{\submod} \times \overline{\submod}$, let $(\mathcal{P}_{G},\mathcal{P}_H)$  be the tuple of the normal fans of the base polytopes $B(G)$ and $B(H)$. 
A decomposition $(G',H')\in\decomp{}{F}$ is called a \emph{(non-trivial) coarsening} of $(G,H)\in\decomp{}{F}$ if $\mathcal{P}_{G'}$ and $\mathcal{P}_{H'}$ are coarsenings of $\mathcal{P}_{G}$ and $\mathcal{P}_H$, respectively (and at least one of them is a non-trivial coarsening). 

\begin{corollary}
   $(G,H) \in \decomp{}{F}$ is a vertex if and only if there is no non-trivial coarsening of $(G,H)$.
\end{corollary}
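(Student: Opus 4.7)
The plan is to translate the statement from the set-function world into the CPWL world via the isomorphism $\overline{\Phi}\colon\overline{\F}_n\to\VPmod$ and then invoke \Cref{thm:vertex_characterization}, which already gives the analogous equivalence in that setting. The two things that need to be checked for the translation are: (i) the decomposition polyhedra match, and (ii) the two notions of ``coarsening'' coincide.

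First, I would set $f=\overline{\Phi}^{-1}(F)$, $g=\overline{\Phi}^{-1}(G)$, $h=\overline{\Phi}^{-1}(H)$. Since $\overline{\Phi}$ is a linear isomorphism with $\overline{\Phi}(\overline{\submod}_n)=\VPP$ (as noted just after the definition of $\overline{\Phi}$), the extension $\overline{\Phi}\times\overline{\Phi}$ maps $\decomp{\PP}{f}$ bijectively and affinely onto $\decomp{}{F}$, where $\PP$ is the polyhedral complex of the braid arrangement. Hence $(G,H)$ is a vertex of $\decomp{}{F}$ if and only if $(g,h)$ is a vertex of $\decomp{\PP}{f}$.

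Second, I need to match up the coarsening notions. Here the key ingredient is the identification, established in the diagram just before the statement, of the base polytope $B(G)$ with the Newton polytope $\Newt(g)$ of the Lov\'asz extension. The normal fan of $\Newt(g)$ is precisely the unique coarsest polyhedral complex $\PP_g$ compatible with $g$ (see the duality discussion in \Cref{sec:newton-polytopes}). Consequently, $\mathcal{P}_{G'}$ is a coarsening of $\mathcal{P}_G$ exactly when $\PP_{g'}$ is a coarsening of $\PP_g$, which by definition means $g'$ is a coarsening of $g$ in the sense of \Cref{sec:compatible}. Therefore $(G',H')$ is a (non-trivial) coarsening of $(G,H)$ in $\decomp{}{F}$ if and only if $(g',h')$ is a (non-trivial) coarsening of $(g,h)$ in $\decomp{\PP}{f}$.

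With these two translations in place, the corollary reduces immediately to the equivalence of statements~\ref{thm:vertex_characterization1} and~\ref{thm:vertex_characterization2} of \Cref{thm:vertex_characterization} applied to $(g,h)\in\decomp{\PP}{f}$. I do not expect any serious obstacle; the only subtle point is being careful that all objects are considered modulo modular (respectively affine) functions, which is exactly why the quotient spaces $\overline{\F}_n$ and $\VPmod$ were introduced. This ensures that ``translation'' of base polytopes is the correct equivalence on the polytope side, matching addition of affine functions on the function side, so the isomorphism $\overline{\Phi}$ and the Newton/base polytope identification are well defined at the level of quotients.
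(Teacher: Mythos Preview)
Your proposal is correct and follows essentially the same route as the paper's proof: transport the problem through the isomorphism $\overline{\Phi}$ to identify $\decomp{}{F}$ with $\decomp{\PP}{f}$, observe that the normal fan of $B(G)=\Newt(g)$ coincides with $\PP_g$ so that the two coarsening notions agree, and then invoke \Cref{thm:vertex_characterization}. The only slip is directional notation---with $\overline{\Phi}\colon\overline{\F}_n\to\VPmod$ as stated in the paper you want $f=\overline{\Phi}(F)$ rather than $\overline{\Phi}^{-1}(F)$---but this does not affect the argument.
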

\begin{proof}
Since $g=\Phi^{-1}(G)$ and $h=\Phi^{-1}(H)$ are the support functions of the base polytopes $B(G)$ respectively $B(H)$, the tuple of normal fans $(\mathcal{P}_{G},\mathcal{P}_H)$ agrees with the tuple $(\PP_g,\PP_h)$ of the unique coarsest polyhedral complexes  compatible with $g$ and $h$. Hence by \Cref{thm:vertex_characterization}, there is no non-trivial coarsening of $(G,H)$ if and only if $(g,h)$ is a vertex of $\decomp{\PP}{f}$ which is the case if and only if $(G,H)$ is a vertex of $\decomp{}{F}$.
\end{proof}
\begin{corollary}
\label{cor:submod}
    For a normalized set function $F \in \overline{\F}_d$, a minimal decomposition of $F$ is a vertex of $\decomp{}{F}$.
\end{corollary}
\begin{proof}
    If $(G,H)$ is not a vertex, then there is a coarsening $(G',H') \in \decomp{}{F}$ of $(G,H)$ implying that $(G',H')$ dominates $(G,H)$. 
\end{proof}

The following example shows that the Lov\'asz extensions of cut functions are hyperplane functions thus admit a unique minimal decomposition into submodular functions, which are themselves cut functions. In particular, the Lov\'asz extensions of the decomposition have at most as many pieces as the  Lov\'asz extensions of the original cut function.

\begin{example}[Minimal decompositions of cut functions]
\label{ex:cut_function}
Let $G=(V,E)$ be a graph where $V=[n]$ and $c \colon E \to \R$ a weight function on the edges. Let $F \in \F_n$ be the cut function given by $F(S)=\sum_{\{u,v\} \in \delta(S)}c(\{u,v\})$, where $\delta(S) \coloneqq \{\{u,v\}   \in E \mid u \in S, v \in V\setminus S\}$. The function $f \coloneqq \Phi^{-1}(F) \in \V{\B_n}$ is given by $f(x) = \sum_{\{u,v\} \in E} c(\{u,v\}) \cdot f_{u,v}(x)$, where $f_{u,v}(x)=\max\{x_u-x_v,x_v-x_u\}$. To see this, first note that $f \in \V{\B_n}$. Thus, it suffices to check that $F(S)=f(\ind_S)$ for all $S \subseteq [n]$, which follows due to the observation that \[f_{u,v}(\ind_S)=\begin{cases}
    1 & \{u,v\} \in \delta(S) \\
    0 & \{u,v\} \not \in \delta(S)
\end{cases}\]
Hence, \Cref{ex:virtual_zonotopes} implies that the functions \[g= \sum_{ c(\{u,v\}) > 0} c(\{u,v\}) \cdot f_{u,v} \text{ and } h= \sum_{c(\{u,v\}) < 0} c(\{u,v\}) \cdot f_{u,v}\] form the unique minimal decomposition of $f$. Thus, $G=\Phi(g)$ and $H=\Phi(h)$, the submodular functions given by \[G(S) = \sum\limits_{\substack{\{u,v\} \in \delta(S)\\c(\{u,v\}) >0}}c(\{u,v\}) \text{ and } H(S) = \sum\limits_{\substack{\{u,v\} \in \delta(S)\\c(\{u,v\}) <0}}c(\{u,v\})\]  are the unique minimal decompositions of $F$ into submodular functions.
\end{example}

\section{Neural Network Constructions}\label{sec:NN}

In this section we consider the following question: Given a CPWL function $f\colon\R^d\to\R$ with $q$ pieces and~$k$ affine components, what is the necessary depth, width, and size of a neural network exactly representing this function? To this end, we first discuss the necessary background on neural networks and known results on neural complexity. We then prove better results for the case of $f$ being convex. Finally, we extend these results to nonconvex functions by writing them as a difference of convex functions.

\subsection{Background} For a number of hidden layers $\ell\geq0$, a \emph{neural network} with \emph{rectified linear unit} (ReLU) activations is defined by a sequence of $\ell+1$ affine transformations $T_i:\R^{n_{i-1}}\to\R^{n_i}$, $i\in[\ell+1]$. We assume that $n_0=d$ and $n_{\ell+1}=1$. If $\sigma$ denotes the function that computes the ReLU function $x\mapsto \max\{x,0\}$ in each component, the neural network is said to compute the function $f\colon\R^d\to \R$ given by $f=T_{\ell+1}\circ\sigma\circ T_\ell\circ\sigma\circ\dots\circ\sigma \circ T_1$. We say that the neural network has \emph{depth} $\ell+1$, \emph{width} $\max_{i\in[\ell]} n_i$, and \emph{size} $\sum_{i\in[\ell]} n_i$.

It is well-known that the maximum of $d$ numbers can be computed with depth $\lceil \log_2d \rceil+1$ and overall size $\mathcal{O}(d)$ \citep{arora2018understanding}. This simple fact has been used in the literature to deduce exact representations of CPWL functions with neural networks from known representations of CPWL functions. We would like to focus on two of them here, which are in a sense incomparable.
The first one goes back to \citet{hertrich2021towards} and builds upon ideas from \citet{wang2005generalization}. We present it here in a slightly stronger form that follows from \cite{koutschan2023representing}.

\begin{theorem}\label{thm:nnsfromwangsun}
    Every CPWL function $f\colon \R^d\to\R$ with $k$ affine components can be represented by a neural network with depth $\lceil \log_2(d+1) \rceil+1$ and overall size $\mathcal{O}(k^{d+1})$.
\end{theorem}

The second one goes back to \citet{chen2022improved} and is based on the lattice representation of CPWL functions, compare \citet{tarela1999region}.
\begin{theorem}[\citep{chen2022improved}]\label{thm:nnsfromlattice}
    Every CPWL function $f\colon \R^d\to\R$ with $q$ pieces and $k$ affine components can be represented by a neural network with depth $\lceil \log_2k\rceil + \lceil\log_2q\rceil+1$ and overall size $\mathcal{O}(kq)$.   
\end{theorem}
\vspace{-0.1cm}
As noted before, one can assume that $d< k \leq q$, since otherwise we could affinely project to a lower dimension without losing information. In fact, one would usually assume that the input dimension $d$ is much lower than the number of affine components $k$. Therefore, \Cref{thm:nnsfromwangsun} provides the better representation in terms of depth, while \Cref{thm:nnsfromlattice} provides the better representation in terms of size. However, both theorems offer only fixed depth-size tradeoffs: each prescribes a particular depth and size bound, without allowing the user to interpolate between these two regimes. So, the naturally occurring question is: can we somehow freely choose a depth and trade depth against size in these representations? In other words: can we smoothly interpolate between the low-depth high-size representation of \Cref{thm:nnsfromwangsun} and the low-size high-depth representation of \Cref{thm:nnsfromlattice}? In the remaining section we present results that achieve this to some extent.
\subsection{New Constructions for the Convex Case}

In this part, we prove that in the convex case one can obtain a flexible depth-size tradeoff by grouping the affine components, applying the low-depth construction from \Cref{thm:nnsfromwangsun} within each group, and then combining the group maxima using the standard ReLU construction for the maximum function.

\begin{theorem}\label{thm:nnsconvex}
    Every convex CPWL function $f\colon \R^d\to\R$ with $q=k$ affine components can be represented by a neural network with  $\lceil \log_2 (d+1) \rceil +\lceil\log_2 r \rceil$ many hidden layers and overall size $\mathcal{O}(rs^{d+1})$, for any free choice of parameters $r$ and $s$ with $rs\geq k$.
\end{theorem}

\begin{proof}
    Recall that a convex CPWL function can be written as the maximum of its affine components, that is, $f(x)=\max_{i\in[k]}\inner{a_i,x}+b_i$. The idea is to split the $k$ affine components of $f$ into $r$ groups of size at most $s$, apply \Cref{thm:nnsfromwangsun} to compute the maximum within each group, and then simply compute the maximum of the $r$ group maxima in a straight-forward way.

    Let us first focus on computing the maximum of at most $s$ affine components within each of the $r$ groups. By \Cref{thm:nnsfromwangsun}, one can achieve this with a neural network with $\lceil \log_2(d+1) \rceil+1$ many hidden layers and overall size $\mathcal{O}(s^{d+1})$. We put all these $r$ neural networks in parallel to each other and add, at the end, the simple neural network computing the maximum of these $r$ maxima according to \citet{arora2018understanding}, which has $\lceil\log_2 r \rceil$ many hidden layers and overall size $\mathcal{O}(r)$. Altogether, the resulting neural network will have the desired depth and size.    
\end{proof}

In this sense, the construction interpolates between the low-depth representation of \Cref{thm:nnsfromwangsun} and the low-size high-depth behavior of \Cref{thm:nnsfromlattice}.  In order to see how this provides a tradeoff between the representations of \Cref{thm:nnsfromwangsun} and \Cref{thm:nnsfromlattice}, it is worth looking at the extreme cases. If we choose $r=1$ and $s=k$, we exactly obtain the bounds from \Cref{thm:nnsfromwangsun}. On the other hand, if we choose $r=k$ and $s=1$, we obtain a neural network with depth $\mathcal{O}(\log k)$ and size $\mathcal{O}(k)$, which is qualitatively close to the bounds of \Cref{thm:nnsfromlattice}. In fact, the even better size bound stems from the fact that our construction heavily relies on convexity.

In conclusion, by choosing an appropriate $r$ (and corresponding $s$), the user can freely decide how to trade depth against size in neural network representations and thereby interpolate between the two extreme representations of \Cref{thm:nnsfromwangsun} and \Cref{thm:nnsfromlattice}.
\subsection{Extension to the Nonconvex Case}

The construction of \Cref{thm:nnsconvex} provides a nice blueprint of how to interpolate between \Cref{thm:nnsfromwangsun} and \Cref{thm:nnsfromlattice}, but it has the big limitation that it only works in the convex case. Simply mixing the two known representations of \Cref{thm:nnsfromwangsun,thm:nnsfromlattice} does not appear to work in the nonconvex case, as one cannot as easily identify groups of affine components that can be treated separately.

Instead we propose a different approach: given a CPWL function, first split it into a difference of two convex ones and then apply \Cref{thm:nnsconvex} to these two functions. To do this efficiently, it requires to find a good answer to \Cref{problem}.

As discussed in this paper, it is quite challenging to give a satisfying answer to \Cref{problem} in full generality, but there are special cases, where we do have a good answer; see \Cref{sec:compatible}. In these special cases, we can extend the interpolation between the two representations in the following way. We need the following useful lemma.

\begin{lemma}\label{lem:decomposition-exists}
    Let $\complex$ be a regular polyhedral complex. Then every CPWL function compatible with $\complex$ can be written as a difference of two convex CPWL functions that are also compatible with $\complex$. In particular, $\VCmod=\spn(\VCP)$.
\end{lemma}
\begin{proof}
    Let $f\in\VCP$ be an arbitrary function.
    Since $\complex$ is regular, by definition there exists a convex function $g\in\VC$ such that $\complex = \complex_g$. \Cref{lem:nonnegative} implies that 
    $w_g(\sigma) > 0$ for all $\sigma \in \complex^{d-1}$. For sufficiently large $\lambda>0$, it follows that $w_{f+\lambda g}\geq 0$ and thus $f = (f+\lambda g) - \lambda g$ is a representation of $f$ as a difference of two compatible, convex functions, as desired.
\end{proof}

We then obtain the following result by combining \Cref{thm:nnsconvex} with \Cref{lem:decomposition-exists}.

\begin{corollary}\label{cor:nnsregularcomplex}
    Let $f\colon \R^d\to\R$ be a CPWL function that is compatible with a \emph{regular} polyhedral complex $\complex$ with $\tilde{q}=\#\complex^d$ full-dimensional polyhedra. Then, $f$ can be represented by a neural network with $\lceil \log_2(d+1)\rceil + \lceil\log_2 r \rceil$ many hidden layers and overall size $\mathcal{O}(rs^{d+1})$, for any free choice of parameters $r$ and $s$ with $rs\geq \tilde{q}$.
\end{corollary}
\begin{proof}
    By \Cref{lem:decomposition-exists}, $f$ can be decomposed into a difference of two convex functions which are compatible with $\complex$. Consequently, each of them has at most $\tilde{q}$ affine components. Applying \Cref{thm:nnsconvex} to both functions separately and simply putting the two corresponding neural networks in parallel, subtracting the outputs, yields a neural network representing $f$ with the desired size bounds.
\end{proof}

\section{Outlook}\label{sec:openproblems}

From the theoretical perspective, the maybe most dominant open question is the following precise version of \Cref{problem}.

\begin{problem}\label{prob:open}
    Given a CPWL function $f$ in dimension $d$ with $q$ pieces, does there always exist a decomposition $f=g-h$ such that the number of pieces of $g$ and $h$ is polynomial in $d$ and $q$?
\end{problem}

A positive answer to \Cref{prob:open} would have useful consequences for our two applications in the context of (submodular) set function optimization and neural network representations. However, also a negative answer would be equally interesting.

One possible approach to resolve \Cref{prob:open} could be to analyze which objective direction according to \Cref{th:minimizer-unique-vertex} leads to a good vertex of the decomposition polyhedron and prove theoretical properties about that vertex. The same theorem might also be key to developing algorithms that find good decompositions with linear programming. Generally, while beyond the scope of this paper, turning any of our insights into practical algorithms, preferably with theoretical guarantees, is a broad avenue for future research.

\section*{Acknowledgments}
Marie-Charlotte Brandenburg was partially supported by the Wallenberg AI, Autonomous Systems and Software Program (WASP) funded by the Knut and Alice Wallenberg Foundation.
Moritz Grillo was supported by the Deutsche Forschungsgemeinschaft (DFG, German Research
Foundation) under Germany’s Excellence Strategy --- The Berlin Mathematics Research Center
MATH+ (EXC-2046/1, project ID: 390685689).
Part of this work was completed while Christoph Hertrich was affiliated with Université Libre de Bruxelles, Belgium, and received support by the European Union's Horizon Europe research and innovation program under the Marie Skłodowska-Curie grant agreement No 101153187---NeurExCo.

\bibliographystyle{abbrvnat}
\bibliography{ref}

\end{document}